\newcommand{\grothendieckgroup}{\mathsf{K _{ 0 }}}
\newcommand{\dgPerf}{\mathop{\scrP\!\mathit{erf}}}
\title{
    Nonexistence of phantom categories on very general noncommutative projective planes
}
\date{\today}
\author{K.~Murai}
\address{
    Department of Mathematics, Graduate School of Science, Osaka University, Machikaneyama 1-1, Toyonaka, Osaka, 560-0043, Japan.
}
\email{u037661k@ecs.osaka-u.ac.jp}
\begin{document}

\begin{abstract}
    We show that very general noncommutative projective planes do not admit phantom categories.
\begin{comment}
    \emph{ガイドライン}
    \begin{enumerate}[(1)]
        \item この論文の主定理の主張
        \item 証明には、Borisov-Kemboiの議論と、ATV2で確立された非可換射影平面上の連接層に関する諸性質を用いる。
    \end{enumerate}
\end{comment}
\end{abstract}

\maketitle

\tableofcontents

%
%

\section{Introduction}
Let \(X\) be a smooth projective variety. A full triangulated subcategory \(\cA\subset\dbcoh(X)\) is called \emph{admissible} if the inclusion admits both left and right adjoints.
Such categories are also called \emph{geometric noncommutative schemes} \cite{ORLOV201659}.
A nontrivial geometric noncommutative scheme is called a \emph{quasi-phantom category} if it has a finite Grothendieck group and trivial Hochschild homology, and it is called a \emph{phantom category} if, in addition, it has trivial Grothendieck group.

In \cite{kuznetsov2009hochschildhomologysemiorthogonaldecompositions}, Kuznetsov conjectured that quasi-phantom categories do not exist, and pointed out that this nonexistence could be used to prove the termination of semi-orthogonal decompositions.
However, this conjecture was disproved by the explicit construction of quasi-phantom categories on classical Godeaux surfaces \cite{MR3062745} and Burniat surfaces \cite{MR3096524}.
Subsequently, phantom categories were constructed on the products of surfaces with a quasi-phantom category \cite{MR3090263} and on determinantal Barlow surfaces \cite{MR3361723}.
These surfaces on which the phantom category was constructed are all of the non-negative Kodaira dimension.
In particular, they are non-rational.
Therefore, it is interesting to consider whether phantom categories exist on rational surfaces.

Among others, there was a folklore conjecture that phantom categories do not exist on del Pezzo surfaces, especially on \(\bP^{ 2 }\).
A del Pezzo surface is either a blow-up of \(\bP^2 \) at most eight points in a general position or \(\bP^{ 1 }\times\bP^{ 1 }\).
This conjecture was solved in the affirmative by Pirozhkov \cite{PIROZHKOV2023109046}.
Borisov and Kemboi extend this result on del Pezzo surfaces to a bigger class of rational surfaces:

\begin{theorem}[{=\cite[{Theorem~3.1}]{borisov2024nonexistencephantomsnongenericblowups}}]\label{theorem:main result of Borisov-Kemboi}
    Let \(X\) be the blow-up of \(\bP_{\bC}^2\) at a finite set of points on a smooth cubic curve \(E\) such that the restriction map 
    \(
        \picardgroup(X)
        \to
        \picardgroup(E)
    \)
    is injective.
    Then \(\dbcoh( X )\) admits no phantom categories.
\end{theorem}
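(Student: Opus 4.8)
The plan is to argue by contradiction. Suppose $\cA\subset\dbcoh(X)$ is a phantom category, so that $\cA\neq 0$, the inclusion induces the zero map on $\mathrm{K}_{0}$, and $\mathrm{HH}_{\bullet}(\cA)=0$. The geometric object I would exploit is the strict transform in $X$ of the cubic curve, which the statement already tacitly identifies with its abstract isomorphism type (via the map $\picardgroup(X)\to\picardgroup(E)$); I denote it again by $E\subset X$. Since the blown‑up points are distinct smooth points of the cubic, $E$ is a smooth curve of genus one, and since they all lie on the cubic its class is $[E]=3H-\sum e_{i}=-K_{X}$, where $e_{i}$ are the exceptional classes. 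Thus $E$ is anticanonical, and, writing $i\colon E\hookrightarrow X$, the pushforward $i_{\ast}\colon\dbcoh(E)\to\dbcoh(X)$ is a spherical functor whose twist $T_{E}$ is $(-)\otimes\mathcal{O}_{X}(E)\cong S_{X}^{-1}[2]$; in particular there is the functorial triangle $(-)\otimes\omega_{X}\to\mathrm{id}\to i_{\ast}i^{\ast}(-)$ arising from the anticanonical section. I also isolate two facts that do not mention the phantom: \textbf{(a)} since $E$ is a curve of positive genus, $\dbcoh(E)$ admits no nonzero proper admissible subcategory; and \textbf{(b)} the injectivity of $\picardgroup(X)\to\picardgroup(E)$, together with the codimension‑of‑support filtration $F^{\bullet}$ on $\mathrm{K}_{0}$ and the fact that $\CH_{0}(X)$ is freely generated by the class of one point (valid as $X$ is rational), forces $\ker\!\big(i^{\ast}\colon\mathrm{K}_{0}(X)\to\mathrm{K}_{0}(E)\big)=F^{2}\mathrm{K}_{0}(X)=\CH_{0}(X)$, the subgroup generated by $[\mathcal{O}_{x}]$ — indeed a class in the kernel has rank zero, and its first Chern class, whose restriction to $E$ is the first Chern class of its image, dies in $\picardgroup(E)$ and hence in $\picardgroup(X)$ by \textbf{(b)}.

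The heart of the matter is to prove that $\cA$ does not ``see'' $E$, i.e. that $i^{\ast}F=0$, equivalently $\mathrm{Supp}(F)\cap E=\emptyset$, for every $F\in\cA$. The natural first step is to transport $\cA$ into $\dbcoh(E)$ along $i^{\ast}$: every $i^{\ast}F$ with $F\in\cA$ has trivial class in $\mathrm{K}_{0}(E)$, because $i^{\ast}[F]$ lies in the image of the zero map $\mathrm{K}_{0}(\cA)\to\mathrm{K}_{0}(X)$. One then hopes to conclude that the triangulated subcategory of $\dbcoh(E)$ generated by $i^{\ast}(\cA)$ is a proper admissible subcategory, hence $0$ by \textbf{(a)}, hence that $i^{\ast}F=0$ for all $F\in\cA$. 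Making this rigorous is the delicate point: $i^{\ast}$ has a right adjoint $i_{\ast}$ but no left adjoint, so its essential image is not automatically admissible, and a single object of trivial class on the elliptic curve $E$ need not vanish (witness $G\oplus G[1]$). The route I would pursue is to bring in that $\cA^{\perp}$ is admissible too and to analyse, for $x\in E$, the decomposition triangle of the skyscraper $\mathcal{O}_{x}$ with respect to $\dbcoh(X)=\langle\cA^{\perp},\cA\rangle$ — here $i^{\ast}\mathcal{O}_{x}\cong\mathcal{O}_{x}\oplus\mathcal{O}_{x}[1]$ (skyscrapers on $E$) by the Koszul resolution of $\mathcal{O}_{E}$ — and to combine this with the $\mathrm{K}$‑theoretic rigidity of \textbf{(b)} and the relation $T_{E}=(-)\otimes\mathcal{O}_{X}(E)\cong S_{X}^{-1}[2]$ between the twist and the Serre functor; the function of \textbf{(b)} is precisely to kill the ``numerically a point'' ambiguity that survives the naive count. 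I expect this to be the principal obstacle, and it is where the hypothesis that the points lie on a cubic is indispensable: it is exactly this hypothesis that furnishes the genus‑one anticanonical curve $E$ in the first place, a general configuration of many points lying on no plane cubic at all.

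Granted that $i^{\ast}|_{\cA}=0$, the rest is comparatively routine. Every object of $\cA$ is then supported inside $X\setminus E$; in particular a classical generator $G$ of $\cA$ — which exists because $\cA$, being admissible, has a smooth and proper enhancement — is supported on a closed subset $Z:=\mathrm{Supp}(G)$ of $X$ disjoint from the divisor $E$, so $\dim Z\leq 1$. Thus $\cA$ is contained in the full subcategory $\mathsf{D}_{Z}\subseteq\dbcoh(X)$ of complexes supported on $Z$. An induction on the length of the thickening of $Z$ over $Z_{\mathrm{red}}$ then reduces the phantom vanishing for $\cA$ to the absence of a phantom inside $\dbcoh$ of a normalization of $Z_{\mathrm{red}}$ — a disjoint union of smooth curves, which carries no phantom by \textbf{(a)} together with the elementary case of $\bP^{1}$ — and, in the residual case where $\cA$ lives over finitely many points, to an elementary verification (such a category is $2$‑Calabi--Yau, and a nonzero $2$‑Calabi--Yau category has $\mathrm{HH}_{0}\neq 0$). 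Either way we contradict the assumption that $\cA$ is a nonzero phantom, which finishes the proof.
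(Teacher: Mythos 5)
First, a caveat: the paper does not prove this statement --- it is quoted from Borisov--Kemboi as motivation --- so I am comparing your sketch against the paper's proof of its own Main theorem, which is an explicit noncommutative transcription of the same argument and shows exactly which ingredients are needed. Your skeleton is the right one (restriction to the anticanonical elliptic curve as a spherical functor, \(K\)-theoretic rigidity from the injectivity of \(\picardgroup(X)\to\picardgroup(E)\), indecomposability of \(\dbcoh(E)\)), and your auxiliary facts (a) and (b) are correct. But the proof has a genuine gap precisely where you flag ``the principal obstacle'': you never establish that \(\bL i^{*}\) annihilates the phantom, and the \(K\)-class computation you do carry out cannot do this on its own (a nonzero object of \(\dbcoh(E)\) may well have trivial class). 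The missing argument, as executed in the proof of the paper's \cref{Main theorem}, is: work with the spherical functor \(\cA\hookrightarrow\dbcoh(X)\xrightarrow{\bL i^{*}}\dbcoh(E)\) (spherical because the cotwist of \(\bL i^{*}\) is the Serre functor up to shift), whose twist \(T\) sits in a triangle \(\bL i^{*}B_{p}\to\cO_{p}\to T(\cO_{p})\) with \(B_{p}\) the projection of \(\cO_{p}\) to the phantom; since the phantom has trivial \(\grothendieckgroup\), \(T(\cO_{p})\) is a spherical object with the same class as \(\cO_{p}\), hence \emph{by the Burban--Kreussler classification of spherical objects on elliptic curves} is \(\cO_{p}[2a_{p}]\); hence \(T\) preserves skyscrapers up to shift and \emph{by Hille--Van den Bergh} has the standard form \(\rho_{*}(-\otimes\cL)[n]\); one then needs \emph{one} point \(p\) with \(\bL i^{*}B_{p}=0\), which requires showing that the set of points arising as the exact support of some restricted object is at most countable (the analogue of \cref{proposition:support of E circ}) --- and this is where the injectivity of \(\picardgroup(X)\to\picardgroup(E)\) is used a second time, through the torsion subgroups \(E[n]\), not merely through your fact (b). Only then can one conclude \(n=0\), \(\rho=\id\), \(\cL\simeq\cO_{E}\), identify \(\id\to T\) with a nonzero endomorphism of \(\cO_{\Delta}\), and deduce \(\bL i^{*}B_{p}=0\) for all \(p\). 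None of these three ingredients appears in your sketch.

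Two smaller points. Once \(\bL i^{*}\) is known to kill the phantom, your endgame is more elaborate than necessary: injectivity of \(\picardgroup(X)\to\picardgroup(E)\) already forbids any curve \(C\subset X\) disjoint from \(E\) (else \(\cO_{X}(C)|_{E}\cong\cO_{E}\) would force \(\cO_{X}(C)\cong\cO_{X}\)), so the support of a generator of the phantom is a finite set of points and the reduction to normalizations of curves is vacuous; the residual \(2\)-Calabi--Yau argument at points is fine. Also, be aware that in the noncommutative setting of the paper the endgame is different in kind: there the skyscrapers \(\{j_{*}\cO_{p}\}_{p\in E}\) form a spanning class of the whole category (\cref{proposition:skyscraper sheaves on E is a spanning class}), because \(\bL j^{*}M=0\) forces \(M=0\); the commutative analogue of that vanishing statement is false, which is exactly why the support analysis you describe is needed in the commutative case.
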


The assumption of \cref{theorem:main result of Borisov-Kemboi} is satisfied if the points on \(E\) are in a very general position.
However, the assumption that all points lie on a cubic curve is a nontrivial closed condition as soon as the number of points is more than nine.
In this sense, the points as in \cref{theorem:main result of Borisov-Kemboi} are in a special position.

In contrast, Krah \cite{MR4701883} constructed phantom categories on blow-ups of \(\bP^2\) at ten points in a general position:

\begin{theorem}[{=\cite[{Theorem~1.1}]{MR4701883}}]\label{theorem:main result of Krah}
    Let \(X\) be the blow-up of \(\bP^2_{ \bC }\) in \(10\) closed points \(p_1,\dots,p_{10}\in\bP^2_{ \bC }\) in a general position. 
    Then there exists an exceptional collection \((\cL_{1},\dots, \cL_{ 13 })\) consisting of line bundles such that its right orthogonal complement 
    \(
        \langle
        \cL_{1},\dots, \cL_{ 13 }
        \rangle^{\perp}
    \)
    is a phantom category.
\end{theorem}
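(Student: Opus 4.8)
The plan is to reduce the assertion to two concrete tasks and then observe that, once these are carried out, the phantom property of \(\cA := \langle \cL_1, \dots, \cL_{13}\rangle^{\perp}\) is almost automatic. First, \(\dbcoh(X)\) already carries \emph{some} full exceptional collection of length \(13\): blowing up the ten points one at a time and iterating Orlov's blow-up decomposition gives a semiorthogonal decomposition with factors \(\dbcoh(\bP^2)\) and ten copies of \(\dbcoh(\mathrm{pt})\). Hence \(\grothendieckgroup(X)\cong\bZ^{13}\) is free, and the Hochschild homology \(\mathrm{HH}_\ast(X)\cong\bC^{13}\) is concentrated in degree \(0\) (one can also read the latter off the Hodge diamond of a rational surface with \(b_2=11\)). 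Since both \(\grothendieckgroup\) and \(\mathrm{HH}_\ast\) are additive along semiorthogonal decompositions, \emph{any} exceptional collection \((\cL_1,\dots,\cL_{13})\) of length \(13\) produces \(\grothendieckgroup(\cA)\oplus\bZ^{13}\cong\bZ^{13}\) and \(\mathrm{HH}_\ast(\cA)\oplus\bC^{13}\cong\bC^{13}\); the first forces \(\grothendieckgroup(\cA)\), being a direct summand of \(\bZ^{13}\) of rank \(0\), to vanish, and the second forces \(\mathrm{HH}_\ast(\cA)=0\). As \(\cA\) is the orthogonal complement of an exceptional (in particular admissible) subcategory, it is admissible. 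Thus the theorem comes down to exhibiting thirteen line bundles that form an exceptional collection which \emph{fails to generate} \(\dbcoh(X)\).

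For the construction I would write \(\picardgroup(X)=\bZ H\oplus\bigoplus_{i=1}^{10}\bZ E_i\) and take each \(\cL_j\) of the shape \(\mathcal{O}_X(d_j H-\sum_i m_{ij}E_i)\) for a carefully chosen integer array, arranged so that the Euler form of the collection is unipotent upper triangular. Exceptionality then amounts to the \(\binom{13}{2}\) cohomology vanishings \(H^\bullet\!\bigl(X,\cL_j\otimes\cL_k^{-1}\bigr)=0\) for \(k>j\), each a statement about a single line bundle \(\mathcal{O}_X(aH-\sum_i b_iE_i)\) on the ten-point blow-up. Each such vanishing is an open condition on the configuration \((p_1,\dots,p_{10})\) by upper-semicontinuity of cohomology in flat families, so it suffices to check all of them at once for a single convenient special configuration — a degenerate or toric-type arrangement on which the relevant linear systems and their higher cohomology are computable by hand — after which they hold on a nonempty Zariski-open locus, which is the meaning of ``general position'' here. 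Choosing the thirteen numerical classes correctly is the combinatorial core of this step; it is here that one exploits that \(10\) is exactly the threshold at which \((-K_X)^2=-1\) and \(X\) stops being (weak) del Pezzo, so that \(X\) is a one-point blow-up of a rational elliptic surface \(\mathrm{Bl}_9\bP^2\to\bP^1\) at a point on a fibre, and the \(\cL_j\) should be adapted to that elliptic pencil.

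The main obstacle is the last point: proving that the collection is \emph{not} full, i.e.\ \(\cA\neq 0\). Neither \(\grothendieckgroup\) nor \(\mathrm{HH}_\ast\) can detect this, since both vanish on \(\cA\) by the reduction above — this is precisely the phantom phenomenon, and it is why the folklore conjecture held for del Pezzo surfaces but breaks here. The route I would take is to exhibit an explicit nonzero object \(F\in\dbcoh(X)\) with \(\operatorname{RHom}(\cL_j,F)=0\) for every \(j\); one expects \(F\) to be a line bundle, or a short complex of line bundles, manufactured from the elliptic pencil on \(\mathrm{Bl}_9\bP^2\) together with the extra exceptional curve \(E_{10}\). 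Checking \(F\in\cA\) is again a finite list of vanishings \(H^\bullet\!\bigl(X,\cL_j^{-1}\otimes F\bigr)=0\), reducible to a special point configuration by semicontinuity as soon as \(F\) has a fixed numerical type. The genuinely delicate point is identifying \(F\): it must be simultaneously orthogonal to all thirteen bundles and nonzero, and the existence of such an object is exactly what fails on del Pezzo surfaces and what the tenth point makes room for. Putting the pieces together — the reduction, the explicit collection with triangular Euler form, the semicontinuity argument for exceptionality, and the explicit witness to non-fullness — shows that \(\cA\) is a nontrivial admissible subcategory with trivial Grothendieck group and trivial Hochschild homology, hence a phantom category in the sense above.
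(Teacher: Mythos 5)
First, note that this theorem is not proved in the paper at all: it is Krah's result, quoted verbatim from \cite[Theorem~1.1]{MR4701883} as background in the introduction, so the only "proof" the paper offers is the citation. Your reduction step is correct and is indeed how the phantom property is deduced once everything else is in place: by Orlov's blow-up formula $\grothendieckgroup(X)\cong\bZ^{13}$ and $\HH_{\bullet}(X)\cong\bC^{13}$ concentrated in degree $0$, both invariants are additive over semi-orthogonal decompositions, a direct summand of $\bZ^{13}$ of rank $0$ vanishes, and the orthogonal complement of an admissible subcategory of $\dbcoh(X)$ is admissible. So the theorem really does reduce to producing a length-$13$ exceptional collection of line bundles that is not full.

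But that is precisely where your proposal has a genuine gap, in two respects. First, you give no actual collection: "a carefully chosen integer array" arranged so that the Euler form is unipotent triangular is a template, not a construction, and the combinatorial choice of the classes $d_jH-\sum_i m_{ij}E_i$ together with the cohomology vanishings for general points is real work in Krah's paper. Second, and more seriously, your strategy for non-fullness cannot work as stated. You propose to exhibit a nonzero $F$ in the complement and "expect $F$ to be a line bundle, or a short complex of line bundles." By your own reduction, every object $F$ of the complement $\cA$ satisfies $[F]=0$ in $\grothendieckgroup(X)$ (the inclusion $\grothendieckgroup(\cA)\hookrightarrow\grothendieckgroup(X)$ is split injective and $\grothendieckgroup(\cA)=0$), and likewise $F$ has trivial image in $\HH_{0}$, i.e.\ trivial Mukai vector. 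A line bundle has nonzero rank, hence nonzero class, so no line bundle lies in $\cA$; a two-term complex of line bundles with trivial class forces the two bundles to be isomorphic, and the cone of a scalar map is either $0$ or $L\oplus L[1]$, which again cannot lie in $\cA$. So the witness you hope for does not exist in any such simple form --- this is exactly the "phantom phenomenon" that makes the theorem hard, and it is why Krah's proof of non-fullness is an indirect argument (deriving a contradiction from the assumption of fullness via the structure of the collection and its dual) rather than the exhibition of an explicit orthogonal object. As it stands, the core of the theorem is not proved by your proposal, and the one concrete route you suggest for it is obstructed by your own first paragraph.
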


Thus the existence of phantom categories is a subtle problem even for rational surfaces.
Moreover, \cref{theorem:main result of Borisov-Kemboi} and \cref{theorem:main result of Krah} mean that a surface without phantom categories may deform to surfaces with phantom categories.

In this paper, we initiate the study of phantom categories in the broader context of noncommutative algebraic geometry.
More specifically, we study flat deformations of \(\coh(\bP^{2})\) as abelian categories \cite{MR2238922}.
According to \cite[{Theorem~7.2}]{MR2836401}, they can be realized as 
\emph{noncommutative projective planes}, which is defined as the quotient category 
\begin{equation*}
    \qgr(A)\coloneqq \grmod(A)/\tors(A), 
\end{equation*}
where \(A\) is a three-dimensional Artin-Schelter (AS-)regular quadratic (graded) algebra, \(\grmod(A)\) denotes the category of finitely generated graded \(A\)-modules, and \(\tors(A)\) denotes the Serre subcategory of finite-dimensional modules.
It is known that \(\boundedderived\qgr(A)\) is a geometric noncommutative scheme \cite[{Theorem~5.8}]{ORLOV201659}.

Three-dimensional quadratic AS-regular algebras are classified by triples \((E,\sigma,\cL)\), where \(E\subset\bP^2\) is a cubic divisor, \(\sigma\in\Aut(E)\) is an automorphism, and \(\cL\in\picardgroup(E)\) is a very ample invertible sheaf.
The cubic divisor \(E\) is considered to be embedded in the noncommutative projective plane via a pair of adjoint functors (see \eqref{eq:adunction of j}).

\begin{equation}\label{eq:embedding cubic divisor into noncomutative projective plane}
    \bL j^{ \ast } 
    \colon
    \boundedderived
    \qgr( A )
    \rightleftarrows
    \dbcoh(E)
    \colon
    j_{ \ast }
\end{equation}

While \(\boundedderived\coh(\bP^{2})\) has no phantom categories, this result does not immediately imply the nonexistence of phantoms on noncommutative projective planes as we pointed out above.
Nevertheless, using a noncommutative analogue of the methods of Pirozhkov \cite{PIROZHKOV2023109046} and Borisov-Kemboi \cite{borisov2024nonexistencephantomsnongenericblowups}, we obtain the following result.

\begin{theorem}[{=\cref{Main theorem}}, MAIN RESULT]\label{Main result}
    Let \(A\) be a three-dimensional AS-regular quadratic algebra associated to a geometric triple \((E,\sigma, \cL)\).
    Assume that \(E\) is nonsingular and \(\sigma\) is a translation of infinite order. Then
    \(
        \boundedderived \qgr(A)
    \)
    admits no phantom categories.
\end{theorem}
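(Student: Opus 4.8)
The plan is to transpose to the noncommutative setting the strategy of Pirozhkov and Borisov--Kemboi, with the divisor $E$ embedded by \eqref{eq:embedding cubic divisor into noncomutative projective plane} playing the role of the anticanonical elliptic curve on a del Pezzo surface. Like $\dbcoh(\bP^{2})$, the category $\boundedderived\qgr(A)$ carries the full exceptional collection $(\mathcal{O},\mathcal{O}(1),\mathcal{O}(2))$, so that $\grothendieckgroup(\boundedderived\qgr(A))$ is free of rank three; I would in fact prove the stronger statement that $\boundedderived\qgr(A)$ has no nonzero admissible subcategory $\cA$ with $\grothendieckgroup(\cA)=0$, which contains the theorem and does not use the vanishing of Hochschild homology. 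The facts about $\qgr(A)$ I would rely on, all available from the ATV-type description of three-dimensional quadratic AS-regular algebras, are: (i) $\boundedderived\qgr(A)$ has Serre functor $(-)\otimes\mathcal{O}(-3)[2]$; (ii) $j$ arises from the canonical surjection $A\to B$ onto the twisted homogeneous coordinate ring $B=B(E,\cL,\sigma)$, whose kernel is generated by a normalizing element $g\in A_{3}$, so that there is a distinguished triangle
\begin{equation*}
    \mathcal{O}(-3)\xrightarrow{g}\mathcal{O}\to j_{\ast}\mathcal{O}_{E}\to\mathcal{O}(-3)[1]
\end{equation*}
and, by the projection formula, $(-)\otimes j_{\ast}\mathcal{O}_{E}\cong j_{\ast}\bL j^{\ast}(-)$; (iii) $j_{\ast}$ is fully faithful; (iv) under the hypotheses of the theorem the simple objects of $\qgr(A)$ are exactly the point objects $P_{p}$ ($p\in E$), with no fat points, and the twist $(-)\otimes\mathcal{O}(1)$ sends $P_{p}$ to $P_{\sigma(p)}$.

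First I would show that $\bL j^{\ast}$ is conservative. If $\bL j^{\ast}F=0$, then $j_{\ast}\bL j^{\ast}F=0$, so tensoring the triangle in (ii) with $F$ gives $F\cong F\otimes\mathcal{O}(-3)$. Taking the top nonvanishing cohomology $M=H^{n}(F)\in\qgr(A)$ for the standard $t$-structure---whose heart is preserved by the $t$-exact autoequivalence $(-)\otimes\mathcal{O}(-3)$---we get $M\cong M\otimes\mathcal{O}(-3)$ with $M\neq0$. Then the Hilbert polynomial of $M$ is $3$-periodic, hence constant, so $M$ has dimension at most $0$ and, being nonzero, is of finite length. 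By (iv) its nonempty finite set $S\subset E$ of composition factors satisfies $\sigma^{3}(S)=S$; but $\sigma^{3}$ is translation by a non-torsion point of $E$, so all of its orbits are infinite and no nonempty finite subset is invariant---a contradiction, so $F=0$. This is the only place where the infinite-order hypothesis enters, and it is indispensable: for $\sigma$ of finite order $\bL j^{\ast}$ fails to be conservative, just as the restriction $i_{E}^{\ast}$ does on $\bP^{2}$.

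Next, let $\cA\subseteq\boundedderived\qgr(A)$ be admissible with $\grothendieckgroup(\cA)=0$, and let $F\in\cA$. Admissible subcategories are stable under the Serre functor and under shifts, so $F\otimes\mathcal{O}(-3)\in\cA$; hence the cone of $F\otimes\mathcal{O}(-3)\xrightarrow{g}F$---which by (ii) is $j_{\ast}\bL j^{\ast}F$---also lies in $\cA$. Thus $\bL j^{\ast}F$ lies in the thick subcategory $\mathcal{G}\coloneqq\{\,G\in\dbcoh(E):j_{\ast}G\in\cA\,\}$ of $\dbcoh(E)$. For $G\in\mathcal{G}$ the class $[j_{\ast}G]$ vanishes in $\grothendieckgroup(\boundedderived\qgr(A))$; since $j_{\ast}$ factors through the numerical Grothendieck group of $E$ (free of rank two, via rank and degree) and is injective there---$j_{\ast}[\mathcal{O}_{E}]$ and $j_{\ast}[\mathcal{O}_{p}]$ are linearly independent in the rank-three lattice $\grothendieckgroup(\boundedderived\qgr(A))$---the object $G$ has rank and degree zero on $E$. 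But a nonzero thick subcategory of $\dbcoh(E)$ always contains a nonzero coherent sheaf (split a nonzero object into shifts of its cohomology sheaves, then one of those into its torsion and locally free parts), and a nonzero sheaf on the curve $E$ has positive length or positive rank. Hence $\mathcal{G}=0$, so $\bL j^{\ast}F=0$, and $F=0$ by the first step; therefore $\cA=0$, which proves the theorem.

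The main obstacle will be the conservativity of $\bL j^{\ast}$; once that, together with the anticanonical triangle and the Serre functor of $\qgr(A)$, is in hand, the rest is formal. Proving conservativity forces one to import the ATV classification of the simple objects of $\qgr(A)$ and the explicit action of $(-)\otimes\mathcal{O}(1)$ on point objects---I do not see a purely categorical route---and this is exactly where the nonsingularity of $E$ and the infinite order of $\sigma$ are used. In this light, the injectivity hypothesis of \cref{theorem:main result of Borisov-Kemboi} is strengthened here to the conservativity of restriction to $E$, and it is this strengthening that lets the very general noncommutative plane avoid the phantoms present on general blow-ups of $\bP^{2}$ (\cref{theorem:main result of Krah}).
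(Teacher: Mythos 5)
Your first step, the conservativity of $\bL j^{\ast}$, is correct and is essentially \cref{lemma:support has non trivial intersection with elliptic curve} of the paper: the paper gets boundedness of the Hilbert function of each $H^{i}(M)$ from \cref{lemma:inequality of gkdimension} and then invokes \cref{proposition:ATV2 Corollary7.9}, while you route through the classification of finite-length objects of $\qgr(A)$ as iterated extensions of point modules; these are the same ATV input.

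The second step, however, has a fatal gap: the assertion that ``admissible subcategories are stable under the Serre functor'' is false. If $\cA$ is admissible then $S(\cA)$ is again admissible (and $\cA$ carries its own Serre functor), but $S(\cA)\neq\cA$ in general; already $\langle\cO\rangle\subset\boundedderived\qgr(A)$ is admissible while $S(\cO)=\cO(-3)[2]\notin\langle\cO\rangle$, since $\RHom(\cO(-3),\cO)\simeq A_{3}$ is $10$-dimensional. Hence you cannot conclude $F(-3)\in\cA$, the cone $j_{\ast}\bL j^{\ast}F$ of $F(-3)\xrightarrow{\;g\;}F$ need not lie in $\cA$, your category $\mathcal{G}$ is not known to contain $\bL j^{\ast}F$, and everything after that point collapses. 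The true $K$-theoretic shadow of your claim --- $[j_{\ast}\bL j^{\ast}F]=[F]-[SF]=0$ in $\grothendieckgroup(\boundedderived\qgr(A))$ --- only gives that $\bL j^{\ast}F$ has rank and degree zero on $E$, which does not force it to vanish (e.g.\ $\cO_{p}\oplus\cO_{q}[1]$). This failure of Serre-invariance is exactly what the Pirozhkov/Borisov--Kemboi method, and the paper, are built to circumvent: because the cotwist of $\bL j^{\ast}$ is the Serre functor of $\boundedderived\qgr(A)$ up to shift (\cref{example: restriction functor is a spherical functor}), Addington's theorem makes the composite $\cB\hookrightarrow\boundedderived\qgr(A)\to\dbcoh(E)$ spherical, and one instead studies the triangle $\bL j^{\ast}\pr^{R}_{\cB}(j_{\ast}\cO_{p})\to\cO_{p}\to T(\cO_{p})$ of \cref{corollary: an exact triangle associated to the derived pullback which is a spherical functor}. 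Proving $T\simeq\id$ then genuinely requires the classification of spherical objects on $E$, the countability of $E^{\mathrm{sp}}$ (\cref{proposition:support of E circ}, the second key technical input, which is absent from your proposal), and a Fourier--Mukai kernel argument. Your endgame via the thick subcategory $\mathcal{G}$ would indeed be shorter than this if the Serre-invariance held, but it does not.
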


This theorem shows that very general noncommutative deformations of the projective plane do not admit phantom categories.
\begin{comment}
\begin{remark}
While phantom categories can be defined more generally for smooth proper dg categories, in this paper we restrict phantom categories to geometric noncommutative schemes to simplify the definition. Therefore, we need to verify that the derived category of the noncommutative projective plane is a geometric noncommutative scheme.
The derived category of noncommutative projective plane \(\boundedderived\qgr( A )\) has a full strong exceptional collection.
By \cite[{Theorem~5.8}]{ORLOV201659}, every triangulated category with a full exceptional collection is a geometric noncommutative scheme.
Therefore, the derived noncommutative projective plane is geometric noncommutative scheme.
This allows us to study phantom categories on noncommutative projective plane.
\end{remark}
\end{comment}
The assumptions of \cref{Main result} are rather technical, and we expect:
\begin{conjecture}
    Every noncommutative projective plane admits no phantom categories.
\end{conjecture}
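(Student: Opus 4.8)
To prove that a noncommutative plane $\mathbf{D}:=\boundedderived\qgr(A)$ as in the statement admits no phantom, the plan is to run the noncommutative counterpart of the Pirozhkov--Borisov--Kemboi strategy, using the embedded cubic $E$ in place of the anticanonical curve on a rational surface. Suppose for contradiction that $\mathcal{A}\subseteq\mathbf{D}$ is a nonzero admissible subcategory with $\grothendieckgroup(\mathcal{A})=0$. The first step is to record two facts about the restriction functor $\bL j^{\ast}\colon\mathbf{D}\to\dbcoh(E)$. First, it is conservative: writing $g\in A_{3}$ for the normalizing element cutting out $E$, the functor $j_{\ast}\bL j^{\ast}$ fits into the Koszul triangle $M(-3)\xrightarrow{\,g\,}M\to j_{\ast}\bL j^{\ast}M$, so $\bL j^{\ast}M=0$ forces $M\cong M(3)$; comparing Hilbert polynomials shows $M$ is supported in dimension $\le 0$, and the hypotheses that $E$ is smooth and $\sigma$ is a translation of infinite order (so that $E$ has no finite $\sigma$-orbit) then force $M=0$. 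Second, it destroys the numerical information of $\mathcal{A}$: since $\grothendieckgroup(\mathcal{A})=0$, every $a\in\mathcal{A}$ has $[a]=0$ in $\grothendieckgroup(\mathbf{D})\cong\mathbb{Z}^{3}$, hence $[\bL j^{\ast}a]=0$ in $\grothendieckgroup(E)\cong\mathbb{Z}\oplus\picardgroup(E)$; in particular $\bL j^{\ast}a$ has rank $0$ and degree $0$ on $E$, with trivial determinant.

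The heart of the argument is to show that such numerical triviality on $E$ is incompatible with conservativity on a nonzero category, which is the analogue of Borisov--Kemboi's use of the injectivity of $\picardgroup(X)\to\picardgroup(E)$. In the clean version --- were the phantom transported to a thick subcategory $\mathcal{B}\subseteq\dbcoh(E)$ --- one argues as follows. From the identification $\bL j^{\ast}j_{\ast}F\cong F\oplus F(-3)[1]$ on $E$ one computes $\chi_{\mathbf{D}}(\mathcal{O},j_{\ast}F)=\deg_{E}F$ and $\chi_{\mathbf{D}}(j_{\ast}F,j_{\ast}F)=-9\,(\operatorname{rk}_{E}F)^{2}$; since the classes of objects of $\mathcal{B}$ vanish in $\grothendieckgroup(\mathbf{D})$, every $F\in\mathcal{B}$ then has $\operatorname{rk}_{E}F=0$ and $\deg_{E}F=0$, so $\mathcal{B}$ consists of torsion complexes of degree $0$. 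Decomposing these by their finite support and using that the local triangulated category of complexes supported at a point $p\in E$ has no proper nonzero thick subcategory, a nonzero $\mathcal{B}$ would contain a skyscraper $\mathcal{O}_{p}$, whose class $(0,\mathcal{O}_{E}(p))\in\grothendieckgroup(E)$ is nonzero --- a contradiction. Hence $\mathcal{B}=0$, and since $j_{\ast}\bL j^{\ast}a$ lies in $\mathcal{B}$ for $a\in\mathcal{A}$, conservativity of $\bL j^{\ast}$ yields $\mathcal{A}=0$.

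The main obstacle is precisely the clause ``were the phantom transported to a thick subcategory of $\dbcoh(E)$'': the functor $\bL j^{\ast}|_{\mathcal{A}}$ is \emph{not} full --- equivalently, $\mathcal{A}$ need not be stable under the twist $(-)(3)$, since the Serre functor of $\mathbf{D}$ is $(-)(-3)[2]$ and admissible subcategories are not Serre-stable in general --- so one cannot simply intersect $\mathcal{A}$ with the image of $j_{\ast}$ and land inside $\dbcoh(E)$. Overcoming this is the genuinely new input over the del Pezzo case and is the noncommutative analogue of the technical core of \cref{theorem:main result of Borisov-Kemboi}: one must run the numerical argument above for the non-full functor $\bL j^{\ast}|_{\mathcal{A}}$ directly --- working with a classical generator of $\mathcal{A}$ and its image on $E$, and using the vanishing of Hochschild homology of $\mathcal{A}$ to control the extra summands that $\bL j^{\ast}$ creates --- rather than for an honest subcategory of $\dbcoh(E)$. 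The remaining ingredients are more routine bookkeeping in the Artin--Tate--Van den Bergh theory of coherent sheaves on noncommutative planes: the dimension theory used for conservativity, the formula $\bL j^{\ast}j_{\ast}F\cong F\oplus F(-3)[1]$, full faithfulness of $j_{\ast}$, and the identification of the Serre functor of $\mathbf{D}$ with $(-)(-3)[2]$.
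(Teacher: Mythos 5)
The statement you are asked to prove is stated in the paper as an open \emph{conjecture}, not a theorem: the authors explicitly prove nonexistence of phantoms only under the hypotheses that $E$ is a \emph{smooth} cubic and $\sigma$ is a \emph{translation of infinite order}, and they note that their methods (in particular the Artin--Tate--Van den Bergh input, \cref{proposition:ATV2 Corollary7.9}) do not extend to arbitrary noncommutative projective planes. Your proposal does not close this gap; in fact it silently reinstates exactly those hypotheses. Your conservativity argument for $\bL j^{\ast}$ invokes ``$E$ smooth and $\sigma$ a translation of infinite order,'' and this is unavoidable with the present technology: if $\sigma$ has finite order, $\Lambda_{0}$ can have nontrivial finite-dimensional representations, bounded-dimension modules need not be $g$-torsion, and $\bL j^{\ast}$ can kill nonzero objects, so the very first step of your plan fails for the general case the conjecture covers (to say nothing of singular or non-reduced $E$, where the classification of spherical objects and the structure of $\picardgroup(E)$ used later also change). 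So at best you are re-proving \cref{Main theorem}, not the conjecture.

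Even as a proof of \cref{Main theorem}, the central step is a placeholder rather than an argument. You correctly identify that one cannot transport the phantom to a thick subcategory $\mathcal{B}\subset\dbcoh(E)$ because $\bL j^{\ast}|_{\mathcal{A}}$ is not full, but your resolution --- ``run the numerical argument for the non-full functor directly, working with a classical generator and using the vanishing of Hochschild homology to control the extra summands'' --- is not carried out and is not how this difficulty is actually overcome. The paper's route is different and concrete: it shows $\bL j^{\ast}$ restricted to an admissible subcategory is a \emph{spherical functor} (\cref{example: restriction functor is a spherical functor}, \cref{corollary: an exact triangle associated to the derived pullback which is a spherical functor}), applies the triangle $\bL j^{\ast}\pr^{R}_{\cB}(j_{\ast}\cO_{p})\to\cO_{p}\to T(\cO_{p})$ together with the Burban--Kreussler classification of spherical objects on $E$ and the $K_{0}(\cB)=0$ hypothesis to conclude $T(\cO_{p})\simeq\cO_{p}[2a_{p}]$, uses the countability of the set $E^{\mathrm{sp}}$ (\cref{proposition:support of E circ}) to find a point where $T(\cO_{p})\simeq\cO_{p}$ on the nose, and then uses Hille--Van den Bergh to pin down $T\simeq\id$, forcing $\bL j^{\ast}B_{p}=0$ for all $p$. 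None of this appears in your sketch, and the Hochschild-homology vanishing you appeal to is never used in the paper (the result proved is stronger, requiring only $K_{0}(\cB)=0$). In short: the conjecture remains open, and your outline of the known special case omits its genuinely hard ingredients.
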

Our methods do not directly extend to arbitrary noncommutative projective planes.
One reason for this limitation is that some results from \cite{ATV2} cannot be applied to arbitrary noncommutative projective planes.
Consequently, \cref{proposition:skyscraper sheaves} and \cref{proposition special point} (see below) cannot be extended to arbitrary noncommutative projective planes.

Borisov and Kemboi conjectured that any smooth projective surface which admits an effective (anti-)canonical divisor does not admit  phantom categories \cite[{Conjecture~1.3}]{borisov2024nonexistencephantomsnongenericblowups}.
This conjecture naturally extends to the noncommutative setting, especially to blow-ups at points on the anti-canonical divisor \(E\) of noncommutative projective planes \cite{MR1846352}.
Namely, we expect:
\begin{conjecture}
    Any blow-up at points on the anti-canonical divisor \(E\) of a noncommutative projective plane admits no phantom categories.
\end{conjecture}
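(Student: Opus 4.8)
The plan is to carry out the noncommutative analogue of the arguments of Pirozhkov and of Borisov-Kemboi, the decisive new inputs being the structural facts about point objects of \(\qgr(A)\) recorded in \cref{proposition:skyscraper sheaves} and \cref{proposition special point}. Suppose, for contradiction, that \(\mathcal{P}\subset\boundedderived\qgr(A)\) is a nonzero phantom category. Being admissible, \(\mathcal{P}\) is in particular right-admissible, so we have a semiorthogonal decomposition \(\boundedderived\qgr(A)=\langle\mathcal{P}^{\perp},\mathcal{P}\rangle\); let \(\pi\colon\boundedderived\qgr(A)\to\mathcal{P}\) be the associated projection functor, fitting each object \(F\) into a triangle \(\pi(F)\to F\to F_{\mathcal{P}^{\perp}}\to\pi(F)[1]\) with \(F_{\mathcal{P}^{\perp}}\in\mathcal{P}^{\perp}\). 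Two elementary observations drive everything. First, the induced splitting \(\grothendieckgroup(\boundedderived\qgr(A))=\grothendieckgroup(\mathcal{P})\oplus\grothendieckgroup(\mathcal{P}^{\perp})\) shows that the class of \(\pi(F)\) in \(\grothendieckgroup(\boundedderived\qgr(A))\) vanishes for every \(F\), since \(\grothendieckgroup(\mathcal{P})=0\). Second, \(\pi(F)=0\) holds if and only if \(F\in\mathcal{P}^{\perp}\), i.e. if and only if \(\operatorname{RHom}(G,F)=0\) for all \(G\in\mathcal{P}\).

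The first half of the argument is a reduction: it suffices to show that \(\pi(\mathcal{O}_{p})=0\) for every closed point \(p\in E\), where \(\mathcal{O}_{p}=j_{\ast}(\mathcal{O}_{p}^{E})\in\qgr(A)\) is the point object attached to \(p\) (see \cref{proposition:skyscraper sheaves}). Indeed, by the second observation this yields \(\operatorname{RHom}(G,\mathcal{O}_{p})=0\) for all \(G\in\mathcal{P}\) and all \(p\in E\). Now let \(G\in\mathcal{P}\) be nonzero and let \(k\) be the top cohomological degree of \(G\) with respect to the standard \(t\)-structure on \(\boundedderived\qgr(A)\), whose heart is \(\qgr(A)\), so that \(H^{k}(G)\neq 0\). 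Since \(\qgr(A)\) is noetherian, the nonzero object \(H^{k}(G)\) has a simple quotient \(S\), and by \cref{proposition special point} every simple object of \(\qgr(A)\) is a point object \(\mathcal{O}_{p}\) with \(p\in E\). The composite of the canonical truncation morphism \(G\to H^{k}(G)[-k]\) with the surjection \(H^{k}(G)[-k]\twoheadrightarrow\mathcal{O}_{p}[-k]\) induces on \(H^{k}\) the surjection \(H^{k}(G)\twoheadrightarrow\mathcal{O}_{p}\), hence is nonzero; so \(\operatorname{Hom}(G,\mathcal{O}_{p}[-k])\neq 0\), contradicting the vanishing just obtained. Therefore \(\mathcal{P}=0\). (If it is more convenient to establish the vanishing in the opposite decomposition \(\langle\mathcal{P},{}^{\perp}\mathcal{P}\rangle\), one obtains instead \(\operatorname{RHom}(\mathcal{O}_{p},G)=0\); since the Serre functor of \(\boundedderived\qgr(A)\) permutes the point objects by a fixed power of \(\sigma\), quantifying over all \(p\in E\) returns the same input.)

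It remains to prove that \(\pi(\mathcal{O}_{p})=0\) for all \(p\in E\), and this is the hard part. The objects \(\pi(\mathcal{O}_{p})\) vary in a family over \(E\): the composite functor \(\pi\circ j_{\ast}\colon\dbcoh(E)\to\mathcal{P}\) sends \(\mathcal{O}_{p}^{E}\) to \(\pi(\mathcal{O}_{p})\), and everything in its essential image has trivial class in \(\grothendieckgroup(\boundedderived\qgr(A))\) by the first observation. I would show that this functor vanishes by transporting the Borisov-Kemboi argument: its target \(\mathcal{P}\) has trivial Grothendieck group and trivial Hochschild homology; its source \(\dbcoh(E)\) is, for the nonsingular elliptic curve \(E\), extremely rigid, since \(\omega_{E}\) is trivial, so \(\dbcoh(E)\) admits no nontrivial semiorthogonal decomposition and its objects are completely classified; and \cref{proposition:skyscraper sheaves} together with \cref{proposition special point} ensure that the point objects \(\mathcal{O}_{p}\) are pairwise orthogonal, all of one and the same \(\grothendieckgroup\)-class, and supported on the embedded curve \(E\) --- which is precisely the data that takes the place of the injectivity of \(\picardgroup(X)\to\picardgroup(E)\) in \cref{theorem:main result of Borisov-Kemboi}. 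The main obstacle is exactly this step: carrying the Borisov-Kemboi family estimate over to \(\qgr(A)\) requires the properties of point modules established in \cite{ATV2}, which are available only when \(E\) is nonsingular and \(\sigma\) is a translation of infinite order; it is their failure for a general geometric triple that keeps the theorem confined to the very general case.
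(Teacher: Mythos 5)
The statement you are proving is one of the paper's open \emph{conjectures}: the paper offers no proof of it, and explicitly warns that its methods do not directly extend beyond the very general noncommutative projective plane itself. Your proposal does not actually engage with the conjecture's object. A ``blow-up at points on the anti-canonical divisor of a noncommutative projective plane'' (in the sense of Van den Bergh, \cite{MR1846352}) is a different category from \(\boundedderived\qgr(A)\): it has a larger Grothendieck group, additional exceptional components, a different Serre functor, and a restriction functor to \(E\) whose cotwist is not simply \(M\mapsto M(-3)[1]\). Your entire argument takes place in \(\boundedderived\qgr(A)\) and invokes \cref{proposition:skyscraper sheaves} and \cref{proposition special point}, whose proofs rest on \cite[Propositions 7.5, 7.9]{ATV2} for the algebra \(A\) itself; no analogues of these are known for the blown-up algebra, and establishing them is precisely the open problem. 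So at best you have sketched the paper's \cref{Main theorem}, not the conjecture.

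Even read as a sketch of \cref{Main theorem}, there are two gaps. First, your reduction step asserts that ``by \cref{proposition special point} every simple object of \(\qgr(A)\) is a point object \(\cO_p\).'' That proposition says nothing of the kind: it states only that \(E\setminus E^{\mathrm{sp}}\neq\emptyset\). The correct tool for the reduction is \cref{proposition:skyscraper sheaves} (the spanning-class property): if \(\pi(j_\ast\cO_p)=0\) for all \(p\), then every \(G\in\mathcal{P}\) satisfies \(\RHom(G,j_\ast\cO_p)=0\) for all \(p\), hence \(G=0\); no classification of simple objects is needed or available. Second, and more seriously, the entire hard step --- proving \(\pi(j_\ast\cO_p)=0\) for every \(p\) --- is deferred to ``transporting the Borisov--Kemboi argument.'' The actual argument requires the spherical-functor machinery of \cref{sec:spherical functor} and \cref{corollary: an exact triangle associated to the derived pullback which is a spherical functor}: one shows \(\bL j^\ast\) restricted to \(\mathcal{P}\) is spherical (using the Serre functor computation of \cref{theorem:Serre functor of qgr(A)}), that the twist sends \(\cO_p\) to a spherical object of the same \(K\)-class, hence to \(\cO_p[2a_p]\) by the classification of spherical objects on an elliptic curve, that the twist is therefore of Fourier--Mukai type \(\rho_\ast(-\otimes\cL)[n]\) by Hille--Van den Bergh, and finally one uses \cref{proposition:support of E circ} to find a point outside \(E^{\mathrm{sp}}\) where the twist acts as the identity, forcing the kernel morphism \(\cO_\Delta\to K_T\) to be an isomorphism. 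None of this is present in your proposal, and it is exactly where the assumptions that \(E\) is smooth and \(\sigma\) is a translation of infinite order enter.
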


\begin{comment}
\todo{
    非可換射影平面のblow-upはanti-canonical divisor 上の天のblow-upになるのでBorisov-kemboi型のblow-upとなり, phantom を持たないことが期待される.
}
\todo{
    NC surfaceはPoisson str の変形から得られる.
    Poisson structure の零点集合は anti-canonical divisor 
    Krah型はNC deformがない
}
\todo{主定理の意義・今後考えられる方向(7) (8)あたりのこと}
\todo{Open question: 変形族のvery general なmemberがphantomを持たなければ、すべてのmemberがphantomを持たない。}
\end{comment}

\subsection{Outline}
In \cref{subsec:admissible subcategories and phantoms} we recall fundamental definitions and results on admissible subcategories and phantom categories.
In \cref{subsec:noncommutative projective plane} we recall the definition and results about AS-regular algebras.
In \cref{sec:spherical functor} we recall the notion of spherical functors.
\cref{subsec:spanning class of noncommutative projective plane} and \cref{subsec:support of graded modules restricted to anti-canonical divisor} develop key technical tools required for our main theorem. 
The following proposition proved in \cref{subsec:spanning class of noncommutative projective plane} roughly corresponds to Step \(3\) of the proof of \cref{theorem:main result of Borisov-Kemboi}. 
\begin{proposition}[{=\cref{proposition:skyscraper sheaves on E is a spanning class}}]\label{proposition:skyscraper sheaves}
    Assume that the order of the automorphism \( \sigma \) associated with the three-dimensional AS-regular algebra \(A\) is infinite.
    Then the set 
    \(
        \{
            j_{\ast} \cO_{ p }
            \mid 
            p\in E
        \}
    \)
    is a spanning class of 
    \(
        \boundedderived\qgr ( A )
    \).
\end{proposition}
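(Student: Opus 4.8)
The approach is to reduce the statement --- via the adjunction \eqref{eq:embedding cubic divisor into noncomutative projective plane} and the classical fact that skyscraper sheaves span $\dbcoh(E)$ --- to the assertion that $\bL j^{\ast}$ is conservative, and then to prove that assertion using the results on supports from \cref{subsec:support of graded modules restricted to anti-canonical divisor}; this is the only place where the hypothesis on $\sigma$ is genuinely used. In detail, set $\Omega\coloneqq\{j_{\ast}\cO_{p}\mid p\in E\}$. If $F\in\boundedderived\qgr(A)$ satisfies $\operatorname{Hom}(F,j_{\ast}\cO_{p}[i])=0$ for all $p\in E$ and all $i$, then the adjunction $\bL j^{\ast}\dashv j_{\ast}$ gives $\operatorname{Hom}_{\dbcoh(E)}(\bL j^{\ast}F,\cO_{p}[i])=0$ for all $p$ and $i$, and since the point sheaves form a spanning class of $\dbcoh(E)$ we get $\bL j^{\ast}F=0$. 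The left-orthogonal condition ${}^{\perp}\Omega=0$ is handled identically with the right adjoint $j^{!}$ of $j_{\ast}$, using $\operatorname{Hom}(j_{\ast}\cO_{p}[i],F)\cong\operatorname{Hom}_{\dbcoh(E)}(\cO_{p}[i],j^{!}F)$ and the fact that, by relative Serre duality for the codimension-one embedding $j$, $j^{!}$ agrees with $\bL j^{\ast}$ up to post-composition with autoequivalences of $\dbcoh(E)$ (a line-bundle twist and a shift), so that $j^{!}$ is conservative iff $\bL j^{\ast}$ is. Hence everything reduces to the implication $\bL j^{\ast}F=0\Rightarrow F=0$.

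To analyze $\bL j^{\ast}$, I would use that $j$ is induced by the surjection $A\twoheadrightarrow B\coloneqq A/gA$ onto the twisted homogeneous coordinate ring $B=B(E,\sigma,\cL)$, where $g\in A_{3}$ is the canonical normal element; thus $j_{\ast}$ is restriction of scalars on $\qgr$ and $\bL j^{\ast}\simeq(-)\otimes^{\bL}_{A}B$. As $A$ is a domain and $g$ a normal nonzerodivisor, $B$ admits the length-one free resolution $0\to A(-3)\xrightarrow{g}A\to B\to0$, so $\bL j^{\ast}F$ is represented by the two-term complex $[F(-3)\xrightarrow{g}F]$; in particular $\bL j^{\ast}F=0$ if and only if multiplication by $g$ is an isomorphism $F(-3)\xrightarrow{\sim}F$ in $\boundedderived\qgr(A)$. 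Given $F\neq0$, let $\pi M=\mathcal H^{n}(F)$ be its top nonvanishing cohomology for the standard $t$-structure. Since $\bL j^{\ast}$ is right $t$-exact, $\mathcal H^{n}(\bL j^{\ast}F)\cong\pi(M/gM)$, and a short diagram chase realizes $\pi(\{m\in M\mid gm=0\})$ as a subquotient of $\mathcal H^{n-1}(\bL j^{\ast}F)$. Hence $\bL j^{\ast}F=0$ forces both $M/gM$ and $\{m\in M\mid gm=0\}$ to be finite dimensional, that is, $g$ acts invertibly on the nonzero object $\pi M\in\qgr(A)$.

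The remaining point --- and the main obstacle --- is that no nonzero object of $\qgr(A)$ can admit an invertible action of $g$. First, a Hilbert-series estimate handles the high-dimensional case: writing $h_{M}(t)=p(t)/(1-t)^{3}$ (a finite free resolution exists since $A$ has global dimension $3$), the inequality $\dim(M/gM)_{n}\geq\dim M_{n}-\dim M_{n-3}$ together with the eventual polynomiality of $n\mapsto\dim M_{n}$ shows that $M/gM$ is finite dimensional only if $\operatorname{GKdim}(M)\leq 1$. So $\pi M$ has Gelfand--Kirillov dimension exactly $1$, and here I would invoke the key structural input of \cref{subsec:support of graded modules restricted to anti-canonical divisor}, which ultimately rests on \cite{ATV2}: when $\sigma$ has infinite order, every object of $\qgr(A)$ of GK-dimension at most $1$ is supported on $E$, so that $g$ acts nilpotently on it. But $g$ also acts injectively on $\pi M$, and an injective, nilpotent endomorphism of a nonzero object cannot exist; therefore $\pi M=0$, hence $F=0$. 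This proves that $\bL j^{\ast}$ is conservative, and with the reduction above that $\Omega$ is a spanning class. The difficulty is entirely concentrated in the last step --- the ATV-type statement that a noncommutative projective plane with $\sigma$ of infinite order has ``no points away from $E$'' --- which is exactly what fails over the classical $\bP^{2}$, where the structure sheaf of any point off $E$ carries an invertible action of $g$.
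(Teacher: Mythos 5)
Your proposal is correct and follows essentially the same route as the paper: reduce via the adjunction $\bL j^{\ast}\dashv j_{\ast}$ to the conservativity of $\bL j^{\ast}$, represent $\bL j^{\ast}F$ by the two-term complex $[F(-3)\xrightarrow{g}F]$, use the Hilbert-series estimate to force $\GKdim\le 1$ on the cohomology, and then invoke \cite[Corollary~7.9]{ATV2} (the paper's \cref{proposition:ATV2 Corollary7.9}) to conclude that such a module is $g$-torsion, contradicting the invertibility of $g$ in $\qgr(A)$. The only (welcome) difference is that you also verify the left-orthogonality condition via $j^{!}$, which the paper omits; this is harmless since the Serre functor sends $j_{\ast}\cO_{p}$ to $j_{\ast}\cO_{\sigma^{-3}(p)}[2]$, so the two conditions are equivalent here.
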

The proof relies on Artin-Tate-Van den Bergh's results on finitely generated graded modules over AS-regular algebras \cite[Propositions 7.5, 7.9]{ATV2}.
The infinite order condition on \(\sigma\) is necessary for applying these results.

The following proposition proved in \cref{subsec:support of graded modules restricted to anti-canonical divisor} roughly corresponds to a consequence of the injectivity of the restriction map in \cref{theorem:main result of Borisov-Kemboi} for \(X=\bP^{2}\) which plays an important role in the proof of \cref{theorem:main result of Borisov-Kemboi}.
\begin{proposition}[{=\cref{proposition:support of E circ}}]\label{proposition special point}
    If the automorphism \( \sigma \) has infinite order, we have 
    \(
        E
        \setminus 
        E^{\mathrm{sp}}
        \neq 
        \emptyset
    \) 
    where
    \(
        E^{ \mathrm{sp} }
        \coloneqq
        \{
            p\in E
            \mid 
            \Supp( 
                \bL j^{ \ast }  M
            )
            =\{p\}
            \text{ for some }
            M\in \boundedderived \qgr( A ) 
        \}.
    \) 
\end{proposition}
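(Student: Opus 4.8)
The plan is to show that $E^{\mathrm{sp}}$ lies in a countable union of finite subsets of $E$; since $E$ has uncountably many points this yields $E\setminus E^{\mathrm{sp}}\neq\emptyset$. So let $p\in E^{\mathrm{sp}}$ and fix $M\in\boundedderived\qgr(A)$ with $\Supp(\bL j^{\ast}M)=\{p\}$. First I would reduce to the case of a sheaf. Filtering $M$ by its cohomology objects $\mathcal{H}^{i}(M)\in\qgr(A)$ and using that $\bL j^{\ast}$ is triangulated, one obtains $\Supp(\bL j^{\ast}M)\subseteq\bigcup_{i}\Supp(\bL j^{\ast}\mathcal{H}^{i}(M))$; since $\bL j^{\ast}M\neq 0$, some $\bL j^{\ast}\mathcal{H}^{i}(M)$ is nonzero, and replacing $M$ by $\cF:=\mathcal{H}^{i}(M)$ we may assume $\cF\in\qgr(A)$ is a \emph{nonzero sheaf} with $\Supp(\bL j^{\ast}\cF)=\{p\}$. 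This reduction is crucial: a nonzero sheaf has nonzero Hilbert polynomial, so $[\cF]=0$ in $\grothendieckgroup(\qgr A)$ would force $\cF=0$, whereas for a general complex this fails — and the argument below would otherwise leave a gap precisely in the case that $\bL j^{\ast}M$ has total length zero at $p$.

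The core is a Grothendieck group computation; write $B:=A/gA$ for the twisted homogeneous coordinate ring $B(E,\sigma,\cL)$, with $g\in A_{3}$ central, so that $\bL j^{\ast}=(-)\otimes_{A}^{\bL}B$. Restriction to the divisor $E$ preserves generic rank (the induced map $\grothendieckgroup(\qgr A)\to\grothendieckgroup(E)$ sends $[\cO(k)]$ to the invertible class $[\cO_{E}(k)]$), so $\bL j^{\ast}\cF$ being supported at a point forces $\operatorname{rk}\cF=0$; hence $[\cF]=d\,c_{\ell}+m\,c_{\mathrm{pt}}$, where $c_{\ell},c_{\mathrm{pt}}\in\grothendieckgroup(\qgr A)$ are the classes of a line module and of a point module (a $\bZ$-basis of the rank-zero subgroup), $d=\deg\cF\ge 0$ and $m\in\bZ$. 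Now compute in $\grothendieckgroup(E)\cong\bZ^{2}\oplus\picardgroup^{0}(E)$ (rank, degree, normalized determinant). For a line module $A/Ax$ ($x\in A_{1}$) one has $\bL j^{\ast}(A/Ax)=B/\bar xB$, concentrated in degree $0$ because $B$ is a domain and $0\neq\bar x\in B_{1}=H^{0}(E,\cL)$ is a nonzerodivisor; this class is of the form $(0,3,\delta_{1})$. A point module $j_{\ast}\cO_{q}$ is a $B$-module (here the infinite order of $\sigma$ enters, via \cite{ATV2}), so $\bL j^{\ast}(j_{\ast}\cO_{q})=\cO_{q}\oplus\cO_{q}(-3)[1]$; under $\qgr(B)\simeq\coh(E)$ the shift $(-3)$ corresponds to translation by a fixed point $\eta$ of $E$ (up to tensoring with a line bundle), so $\cO_{q}(-3)\cong\cO_{q+\eta}$ and this class equals $[\cO_{q}]-[\cO_{q+\eta}]=(0,0,\delta)$, and the decisive point is that $\delta\in\picardgroup^{0}(E)$ is \emph{nontorsion because $\sigma$, hence $\eta$, has infinite order}. (This is where the argument leaves the commutative world, in which $(-3)$ is merely a line-bundle twist, $\delta=0$, and $E^{\mathrm{sp}}=E$.) Altogether $[\bL j^{\ast}\cF]=(0,\,3d,\,d\delta_{1}+m\delta)$.

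On the other hand $\bL j^{\ast}\cF$ is supported at the single point $p$, so $[\bL j^{\ast}\cF]=n_{p}[\cO_{p}]=(0,\,n_{p},\,n_{p}\gamma_{p})$, where $n_{p}\in\bZ$ is the signed length at $p$ and $p\mapsto\gamma_{p}$ is the standard isomorphism $E\xrightarrow{\ \sim\ }\picardgroup^{0}(E)$. Comparing degrees gives $n_{p}=3d$. If $d=0$ then $[\bL j^{\ast}\cF]=0$, hence $m\delta=0$, hence $m=0$ (as $\delta$ is nontorsion), hence $[\cF]=0$ and $\cF=0$ by the first paragraph — a contradiction. So $d\ge 1$, and comparing $\picardgroup^{0}$-components yields $3d\,\gamma_{p}=d\delta_{1}+m\delta$ in $\picardgroup^{0}(E)$. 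For each fixed pair $(d,m)$ with $d\ge 1$, the solutions $\gamma_{p}$ of this equation form a coset of the $(3d)$-torsion subgroup of $\picardgroup^{0}(E)$, which is finite, so $p$ ranges over a finite set. Hence $E^{\mathrm{sp}}$ is contained in a countable union of finite sets, and therefore $E\setminus E^{\mathrm{sp}}\neq\emptyset$.

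I expect the main obstacle to be the Grothendieck group computation of the second paragraph — in particular, pinning down the class in $\grothendieckgroup(E)$ of the restriction of a point module and recognising that the conormal twist $(-3)$ acts on $\coh(E)$ as translation by a \emph{nontorsion} point rather than (as in the commutative case) by a line bundle; this is exactly where the infinite-order hypothesis is used in an essential way. Secondary technical points are the reduction to a sheaf, which disposes of the total-length-zero case, and the identity $\bL j^{\ast}(A/Ax)=B/\bar xB$, which relies on $B$ being a domain.
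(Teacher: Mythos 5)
Your argument is correct in substance, and it takes a genuinely different route from the paper's. The paper first splits \(E^{\mathrm{sp}}=E_{1}\cup E_{2}\) (\cref{lemma:decomposition of E circ}) according to whether the witnessing module can be taken to be a \(g\)-torsion module or a module with \(\bL^{1}j^{\ast}=0\); the set \(E_{1}\) is then shown to lie in the fixed loci of the powers \(\sigma^{3n}\) by a filtration-and-support-propagation argument (the filtration \(0=K_{0}\subset K_{1}\subset\cdots\subset K_{n}=M\) with quotients coming from \(\coh(E)\)), while \(E_{2}\) is handled by a \emph{relative} Grothendieck-group argument comparing two points \(p,q\) realized by modules of the same class \(\eta\) and concluding \(p-q\in E[n]\). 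You replace both steps by a single \emph{absolute} computation of the composite \(\grothendieckgroup(\qgr A)\to\grothendieckgroup(E)\cong\bZ^{2}\oplus\picardgroup^{0}(E)\) on the basis \(\{[\cO],c_{\ell},c_{\mathrm{pt}}\}\); the key input, that \([\bL j^{\ast}(j_{\ast}\cO_{q})]=[\cO_{q}]-[\cO_{\sigma^{-3}(q)}]\) has a \(q\)-independent, non-torsion \(\picardgroup^{0}\)-component \(\delta\), is the \(K\)-theoretic shadow of \cref{lemma: explicit desdription of grade shift on coh E} and \cref{lemma:support of derived pullback of g-torsion module}. Your version needs no case distinction between \(g\)-torsion and \(g\)-torsion-free witnesses, and the degenerate case \(n_{p}=0\) is excluded by the non-torsionness of \(\delta\) rather than by the paper's separate observation that \(\deg\bL j^{\ast}\eta>0\) on a nonempty \(E_{2,\eta}\). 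The price is that you must pin down the image of the point-module class in \(\picardgroup^{0}(E)\) explicitly, which the paper avoids.

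Two points need tightening. First, in the reduction to a sheaf you only record the inclusion \(\Supp(\bL j^{\ast}M)\subseteq\bigcup_{i}\Supp(\bL j^{\ast}H^{i}(M))\); this alone does not let you replace \(M\) by a cohomology sheaf, since some \(\Supp(\bL j^{\ast}H^{i}(M))\) could a priori be strictly larger than \(\{p\}\) with cancellation in the complex. You need the reverse inclusion, which follows from the degeneration at the \(E_{2}\)-page of the spectral sequence \(\bL^{p}j^{\ast}H^{q}(M)\Rightarrow\bL^{p+q}j^{\ast}M\) (only the columns \(p=0,1\) survive), exactly as in \eqref{eq:reduction of E circ}. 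Second, you tacitly write \(\sigma^{-3}\) as translation by a point in order to conclude that \(\delta\) is non-torsion, whereas the proposition assumes only that \(\sigma\) has infinite order. This is harmless: on a smooth elliptic curve any automorphism \(t_{a}\circ\phi\) with \(\phi\in\Aut(E,O)\) nontrivial of order \(k\) satisfies \(\sigma^{k}=t_{(1+\phi+\cdots+\phi^{k-1})(a)}=\id\) because \(1+\phi+\cdots+\phi^{k-1}=0\) in \(\operatorname{End}(E)\), so infinite order forces \(\sigma\) to be a translation by a non-torsion point (and the main theorem assumes this anyway); but the step deserves a sentence. With these two points supplied, the proof is complete.
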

Based on these propositions, we prove the main theorem in \cref{subsec:proof of main result}.

\begin{acknowledgements*}
The author would like to express his gratitude to his advisor, Shinnosuke Okawa, for providing helpful comments during regular seminars.
\end{acknowledgements*}

\begin{comment}
\subsection{Introductionのガイドライン}
\begin{enumerate}[(1)]
    \item phantom圏が現れた歴史的経緯
    \begin{itemize}
        \item Kuznetsov予想(quasi-phantomは存在しない)\(\to\)SODは有限の長さで止まる(DK仮説を通じて、termination of flipと対応)
        \item quasi-phantomの発見(classical Godeaux、Sosna et al)
        \item phantomの発見(Gorchinskiy-Orlov, Barlow surface)
        \item no phantom on rational surface (folklore conjecture)
    \end{itemize}
    \item 有理曲面上にphantomは存在するか？という問題に関する先行研究(Pirozhkov, Krah, Borisov-Kemboi)
    \item 非可換代数幾何学と非可換射影平面について(射影平面は変形しないが、非可換変形はする。それらは、3次元Artin-Schelter regular quadratic algebraのqgrとして得られるということがわかっている(van den bergh, noncommutative quadrics))
    \item この論文の問(非可換射影平面にphantomは存在するか？)
    \begin{itemize}
        \item 変形前にphantomがなくても、変形後にphantomが存在する場合があるので(Borisov-Kemboi + Krah)、これは非自明な問題である。
    \end{itemize}
    \item この論文の主結果
    \item 証明に使う主な道具や、それが本論文のどのセクションで説明されるか？という点に関する簡単な説明
    \item この論文の証明と同様にして、非可換2次曲面(3次元AS-regular cubic Z-algebraのqmod)の場合も同様の事実が証明できるということ
    \item 残された問題、今後考えられる問題（予想）。例えば、「非可換有理曲面には絶対にphantomが存在しない」と言えるか？また、任意の非可換射影平面で本論文の主結果が成立するか？
\end{enumerate}
\end{comment}

\section{Preliminaries}
%
%
\subsection{Admissible subcategories and phantom categories}\label{subsec:admissible subcategories and phantoms}
In this section we review the basic definition (for details, see \cite{huybrechts2006fourier}).
Let \(\cT\) be a \(\bfk\)-linear triangulated category, where \(\bfk\) is a base field. 
For \(X, Y\in\cT\), we define a graded \(\bfk\)-vector space
\begin{equation*}
    \RHom_{ \cT }( X, Y )
    \coloneqq
    \bigoplus_{ \ell\in\bZ }
    \Hom_{ \cT }( X, Y[ \ell ] ).
\end{equation*}

A  triangulated category \(\cT\) is \emph{of finite type over \(\bfk\)} if \(\dim_{ \bfk }\RHom_{ \cT }( X, Y ) < \infty\) for all \(X, Y\in \cT\). 
We assume that any triangulated category is of finite type over a base field \(\bfk\) throughout this paper.

\begin{definition}
    A full triangulated subcategory \(\cA\subset\cT\) is called \emph{admissible} if the inclusion functor admits a left and right adjoint functor.
\end{definition}

\begin{definition}
    A \emph{semi-orthogonal decomposition} of \(\cT\) is an ordered pair of admissible subcategories \(( \cA,\cB )\) of \(\cT\) which satisfies the following conditions:
    \begin{enumerate}
        \item \(\RHom(b,a)=0\) for any \(a\in\cA\) and \(b\in\cB\).
        \item \(\cT\) is the smallest triangulated category which contains \(\cA\) and \(\cB\).
    \end{enumerate}
    If \((\cA, \cB)\) is a semi-orthogonal decomposition of \(\cT\), we write \(\cT=\langle\cA,\cB\rangle\).
\end{definition}

The conditions (1) and (2) imply that for any \(x\in\cT\), there exists a unique exact triangle up to isomorphism
\begin{equation*}
    b\to x\to a \to b[1]
\end{equation*}
where \(a\in\cA\) and \(b\in\cB\).
The correspondences \(x \mapsto a\) and \(x \mapsto b\) induce functors \(\mathcal{T} \to \mathcal{A}\) and \(\mathcal{T} \to \mathcal{B}\), respectively. These are called the \emph{left projection onto \(\mathcal{A}\)} and the \emph{right projection onto \(\mathcal{B}\)}.

\begin{definition}
    An object \(E\in\cT\) is called \emph{exceptional} if \(\RHom_{ \cT }( E, E ) = \bfk[0]\), i.e., \(\Hom_{ \cT }(E, E[ \ell ]) = 0\) when \(\ell\neq 0\) and \(\Hom_{ \cT }( E,E ) = \bfk\).
    An \emph{exceptional collection} in \(\cT\) is a sequence of exceptional objects \((E_{ 1 }, \dots, E_{ n })\) which satisfies \(\RHom_{ \cT }( E_{ i }, E_{ j } ) = 0\) for all \(i > j\).
\end{definition}

\begin{definition}
    An exceptional collection \((E_{ 1 }, \dots, E_{ n })\) is called \emph{strong} if in addition, \(\RHom_{ \cT }( E_{ i }, E_{ j } ) = 0\) for all \(i\neq j\).
\end{definition}

\begin{definition}
    The \emph{Grothendieck group} of \(\cT\), denoted \(\grothendieckgroup(\cT)\), is the abelian group generated by the isomorphism classes of \(\cT\), with the relations of the form 
    \(
        [ X ] - [ Y ] + [ Z ]
    \)
    for all exact triangle 
    \(
        X\to Y\to Z\to X[1]
    \).
\end{definition}

By definition of the Grothendieck group, the exact functor \(F\colon \cT_{ 1 }\to \cT_{ 2 }\) induces the group homomorphism \(\grothendieckgroup(\cT_{1})\to\grothendieckgroup(\cT_{2})\).

If we have a semi-orthogonal decomposition \(\cT = \langle\cA, \cB\rangle\), then the inclusion functors induce an isomorphism
\begin{equation*}
    \grothendieckgroup(\cT)
    \simeq 
    \grothendieckgroup( \cA )
    \oplus
    \grothendieckgroup( \cB ).
\end{equation*}

\begin{definition}
    Let \(X\) and \(Y\) be a smooth projective variety and let \(E\in\dbcoh(X\times Y)\).
    The \emph{integral functor} \(\Phi_{E}\colon \dbcoh(X)\to \dbcoh(Y)\) is defined by 
    \begin{equation*}
        \Phi_{E}( F ) = p_{\ast}(q^{\ast}(F) \Lotimes E)
    \end{equation*} 
    where \(q\colon X\times Y\to X\) and \(p\colon X\times Y\to Y\) are projections. 
    The object \(E\) is called the \emph{kernel of the integral functor} \(\Phi_{E}\).
\end{definition}

\begin{definition}[{\cite{ORLOV201659}}]
    A \emph{geometric noncommutative scheme} is an admissible subcategory of \(\dbcoh( X )\) where \(X\) is a smooth projective variety.
\end{definition}

By definition of geometric noncommutative schemes, any admissible subcategory of a geometric noncommutative scheme is also a geometric noncommutative scheme.

\begin{example}
    A triangulated category with a full exceptional collection is a geometric noncommutative scheme by 
    \cite[{Theorem~5.8}]{ORLOV201659}.
    By \cref{theorem: full strong exceptional collection on fake p2}, every noncommutative projective plane is a geometric noncommutative scheme.
\end{example}

A geometric category \(\cA\) inherits a differential graded (dg) enhancement from the derived category of coherent sheaves on some smooth projective variety.
By using this enhancement of \(\cA\), we can define the Hochschild homology \(\HH_{ \bullet }( \cA )\) of \(\cA\).

\begin{definition}
    A nontrivial geometric noncommutative scheme \(\cA\) is called a \emph{quasi-phantom category} if its Hochschild homology \(\HH_{\bullet}(\cA) = 0\) and its Grothendieck group \(\grothendieckgroup(\cA)\) is finite.
    It is a \emph{phantom category} if in addition \(\grothendieckgroup(\cA)=0\).
\end{definition}

\begin{remark}
    We omit the detailed definition of Hochschild homology because our main theorem yields a stronger result: the nonexistence of admissible subcategories with vanishing Grothendieck group, which implies the nonexistence of phantom categories (see the detailed definition of Hochschild homology in \cite{MR2275593} or \cite{kuznetsov2009hochschildhomologysemiorthogonaldecompositions}).
\end{remark}

%
%
\subsection{Noncommutative projective planes}\label{subsec:noncommutative projective plane}
\subsubsection{Three dimentional Artin-Schelter regular quadratic algebras}
Let 
\(
    \bfk
\)
be an algebraically closed field of characteristic zero.
All algebras are assumed to be over \( \bfk \) unless otherwise stated.
Let 
\(
    A
    = 
    \bigoplus_{ n \in \bZ } A_{ n }
\)
be a \( \bZ \)-graded algebra.
Let
\(
    \Grmod ( A )
\) 
denote the category of graded right modules over \( A \).
The \emph{truncation} is an endofunctor of \( \Grmod ( A ) \) defined by 
\begin{equation}\label{eq: truncation}
    \Grmod ( A )
    \to
    \Grmod ( A ),\ 
    M
    \mapsto
    M_{ \geq m}
    \coloneqq \bigoplus_{ k\geq m } M_{ k } 
\end{equation}
for 
\(
    m \in \bZ
\),
similarly, the graded modules \( M_{ > m } \), \( M_{ < m } \), and \( M_{ \leq m } \) are also defined.
The \( n \)-th \emph{shift} of gradings for \( n \in \bZ \) is an autoequivalence of \( \Grmod ( A ) \) defined by 
\begin{equation}\label{eq: shift}
    \Grmod ( A )
    \to
    \Grmod ( A ),\ 
    M\mapsto M( n ).
\end{equation}
The graded module \( M( n ) \) is defined by 
\(
    M( n )_{ k }
    =
    M_{ n + k }.
\)
We say that \( A \) is \emph{positively graded} if 
\(
    A_{ < 0 }
    =
    0
\).
For a positively graded algebra \( A \),
the 
\(
    A_{ > 0 }
\)
is a two-sided ideal, 
and there exists a canonical isomorphism of graded bimodules
\(
    A_{ 0 }
    \simeq 
    A / A_{ >0 }
\).

\begin{definition}\label{def: d-dimensional AS-regular algebra}
    Let \(A\) be a positively \(\bZ\)-graded algebra. We say \(A\) is \emph{\(d\)-dimensional Artin-Schelter (AS-)regular} if the following conditions are satisfied.
    \begin{enumerate}
        \item \( A \) is connected. i.e., \(  A_0 = \bfk \),
        \item \( \gldim A = d \),
        \item \( \dim A_{n} \) is bounded by a polynomial in \(n\),
        \item (Gorenstein condition) 
        \(
            \Ext^{ i }_{ A }( \bfk, A ) 
            \simeq 
            \begin{cases}
                \bfk &\text{ if } i=d,\\
                0 &\text{ if } i\neq d.\\
            \end{cases}
        \)
    \end{enumerate}
\end{definition}

Any three-dimensional AS-regular algebra generated in degree one is either a quadratic or a cubic algebra (\cite[Theorem~1.5]{ARTIN1987171}).
In this paper, we focus on three-dimensional quadratic AS-regular algebras defined as follows.

\begin{definition}\label{def: three dimensional quadratic AS-regular algebra}
    Let \( A \) be an AS-regular algebra. We say that \( A \) is a \emph{three-dimensional quadratic (AS-)regular algebra} if the minimal resolution of 
    \(
        \bfk_{ A }
    \) 
    has the form 
    \begin{equation}\label{eq: minimal resolution of the simple module}
        0
        \to 
        A(-3)
        \to
        A(-2)^{\oplus 3}
        \to
        A(-1)^{\oplus 3}
        \to
        A
        \to 
        \bfk_{ A }
        \to
        0.
    \end{equation}
\end{definition}

Let 
\( 
    \grmod ( A )
\)
be an abelian category of finitely generated graded \( A \)-modules,
and let 
\( 
    \tors ( A )
\) 
be a Serre subcategory of 
\( 
    \grmod ( A )
\) 
whose objects are finite dimensional modules.
The Serre quotient of 
\( 
    \grmod( A )
\) 
by 
\( 
    \tors( A )
\) 
will be denoted by 
\begin{equation}\label{eq: quotient qgr}
    \pi
    \colon
    \grmod ( A ) 
    \to
    \qgr ( A ) 
    \coloneqq
    \grmod ( A ) / \tors ( A ).
\end{equation}
The \(\pi\) is an exact functor.
The truncation \eqref{eq: truncation} and the shift of gradings \eqref{eq: shift} send any object in 
\(
    \tors( A )
\)
to itself.
Hence these endofunctors of 
\( 
    \grmod ( A )
\) 
induce the endofunctors of 
\( 
    \qgr ( A )
\).
We use the same notation for these endofunctors of 
\( 
    \qgr ( A )
\).

\subsubsection{Geometric algebras}

Let 
\( 
    ( X, \sigma, \cL )
\) 
be a triple of a projective variety \( X \), 
an automorphism 
\(
    \sigma \in \Aut( X )
\), 
and a very ample invertible sheaf 
\(
    \cL \in \picardgroup( X )
\).
Set 
\( 
    V= H^{ 0 } ( X, \cL )
\) 
and consider 
\( 
    X 
\) 
is embedded into 
\( 
    \bP ( V^{ \ast } ) 
\) 
by \( \cL \). 
\begin{definition}\label{def: geometric algebra}
The \emph{geometric algebra} \( A( X,\sigma, \cL ) \) associated to the triple
\( 
    ( X, \sigma, \cL ) 
\)
as above is the quotient algebra of the tensor algebra \( T( V ) \) by the two-sided ideal generated by 
\( 
    R =
    \{ 
        f \in V \otimes V 
        \mid 
        f (p, \sigma( p ) ) 
        = 
        0 
        \text{ for any point } p \in X
    \}
\).
\end{definition}

\begin{theorem}[{\cite{ATV1}}]\label{theorem: AS quads are isom to geometric algebra}
    For a three-dimensional AS-regular quadratic algebra \( A \), 
    there exists a geometric triple 
    \(
        (E, \sigma, \cL)
    \) 
    of the following types whose geometric algebra \( A( E, \sigma, \cL ) \) is isomorphic to \( A \).
    \begin{enumerate}
        \item \( E = \bP^{ 2 } \), \( \sigma\in\Aut( E ) \) and \(\cL=\cO_{ \bP^{2} } (1)\), or 
        \item \( E \subset \bP^{ 2 }\) is a divisor of degree \(3\), \(\sigma \in \Aut(E)\) and \(\cL=\cO_{ E }( 1 ) \), such that \(\sigma^{ \ast }\cL \ncong \cL\), \((\sigma^{2}) ^ { \ast } \cL \otimes_{ E } \cL \simeq \sigma^{ \ast } \cL \otimes_{ E } \sigma^{ \ast } \cL\).
    \end{enumerate}
    Conversely, a geometric algebra associated to a triple \((E, \sigma, \cL)\) as above is a three-dimensional quadratic AS-regular algebra.
\end{theorem}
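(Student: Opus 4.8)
The plan is to follow the strategy of Artin--Tate--Van den Bergh: recover a geometric triple from the moduli of point modules of \(A\), and then pin down its type by comparing \(A\) with the associated twisted homogeneous coordinate ring. Write \(V = A_{1}\); by \eqref{eq: minimal resolution of the simple module} we have \(\dim_{\bfk} V = 3\) and \(A = T(V)/(R)\) with \(R \subseteq V \otimes V\) a three-dimensional space of quadratic relations, and moreover \(h_{A}(t) = (1-t)^{-3}\). First I would set up \emph{point modules}, i.e.\ cyclic graded right \(A\)-modules with Hilbert series \((1-t)^{-1}\): since \(A\) is quadratic, its truncated point modules are parametrized by the closed subschemes \(\Gamma_{n} \subseteq (\bP^{2})^{\times n}\) of sequences \((p_{1},\dots,p_{n})\) with every consecutive pair on the relation scheme \(\Gamma \coloneqq \Gamma_{2} = \{\, f = 0 : f \in R \,\} \subseteq \bP^{2} \times \bP^{2}\), where we identify \(\bP^{2} = \bP(V^{\ast})\). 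The technical core is then to feed the Koszul-type complex underlying \eqref{eq: minimal resolution of the simple module} through the point-module functor and deduce that the truncation maps \(\Gamma_{n+1} \to \Gamma_{n}\) are isomorphisms for \(n \geq 2\); equivalently, that the two projections \(\mathrm{pr}_{1}, \mathrm{pr}_{2} \colon \Gamma \to \bP^{2}\) have a common scheme-theoretic image \(E\) and are both isomorphisms onto \(E\). Granting this, put \(\sigma \coloneqq \mathrm{pr}_{2} \circ \mathrm{pr}_{1}^{-1} \in \Aut(E)\) and \(\cL \coloneqq \cO_{E}(1)\); then \(\Gamma\) is the graph \(\{(p,\sigma p) : p \in E\}\), so \(R = \{\, f \in V \otimes V : f(p, \sigma p) = 0 \text{ for all } p \in E \,\}\) is exactly the relation space defining the geometric algebra, and hence \(A \cong A(E,\sigma,\cL)\).

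Next I would determine \(E\) and the constraints on \(\cL\) by comparing Hilbert series with the twisted homogeneous coordinate ring \(B \coloneqq B(E,\sigma,\cL) = \bigoplus_{n \geq 0} H^{0}(E,\; \cL \otimes \sigma^{\ast}\cL \otimes \cdots \otimes (\sigma^{n-1})^{\ast}\cL)\), which receives a surjective graded algebra map from \(A\) induced by the restriction \(V = H^{0}(\bP^{2}, \cO(1)) \twoheadrightarrow H^{0}(E,\cL)\). Since \(h_{A}(t) = (1-t)^{-3}\), a degreewise dimension count gives two cases. Either \(E = \bP^{2}\), \(\cL = \cO(1)\), and \(A \cong B(\bP^{2},\sigma,\cO(1))\) with \(\sigma\) a linear automorphism --- this is type (1). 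Or the kernel of \(A \twoheadrightarrow B\) is a principal ideal \(gA\) generated by a normal non-zero-divisor \(g \in A_{3}\) with \(A/gA \cong B\), whence \(h_{B}(t) = (1-t^{3})(1-t)^{-3}\), which forces \(E \subseteq \bP^{2}\) to be a divisor of degree \(3\) with \(\cL = \cO_{E}(1)\) --- this is type (2). In the latter case I would extract the two displayed conditions on \(\cL\) from the normal element: unwinding \(g\) in terms of the quadratic relations, the requirements that it normalize \(A\) and generate \(\ker(A \twoheadrightarrow B)\) translate precisely into \((\sigma^{2})^{\ast}\cL \otimes_{E} \cL \simeq \sigma^{\ast}\cL \otimes_{E} \sigma^{\ast}\cL\) (so that \(B\) has the expected Hilbert series and \(A(E,\sigma,\cL)\) acquires no relation in degree \(3\)), together with \(\sigma^{\ast}\cL \ncong \cL\) (so that \(g\) is genuinely nonzero, i.e.\ the degree-\(3\) relation of \(B\) is not a consequence of \(R\)).

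For the converse I would reverse this bookkeeping. Given a geometric triple \((E,\sigma,\cL)\) of type (1) or (2), write \(A' \coloneqq A(E,\sigma,\cL)\) and \(B \coloneqq B(E,\sigma,\cL)\); by Artin--Van den Bergh, \(B\) is Noetherian with \(\qgr B \simeq \coh E\). In type (1) one has \(A' = B = B(\bP^{2},\sigma,\cO(1))\), a twisted polynomial ring, which is directly checked to be three-dimensional quadratic AS-regular. In type (2), the hypothesis \((\sigma^{2})^{\ast}\cL \otimes_{E} \cL \simeq (\sigma^{\ast}\cL)^{\otimes 2}\) makes the line bundles \(\cL \otimes \sigma^{\ast}\cL \otimes \cdots \otimes (\sigma^{n-1})^{\ast}\cL\) have the expected sections --- their \(H^{1}\) vanishes, \(\cL\) being of degree \(3\) on a plane cubic of arithmetic genus \(1\) --- so that \(B\) is generated in degree one with relations in degrees \(\leq 3\) and \(h_{B}(t) = (1-t^{3})(1-t)^{-3}\). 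Since \(\sigma^{\ast}\cL \ncong \cL\), the cubic equation cutting out \(E\) lifts to a normal non-zero-divisor \(g \in A'_{3}\) with \(A'/gA' \cong B\), and the Hilbert-series identity then forces \(h_{A'}(t) = (1-t)^{-3}\); in particular \(\dim R = 3\), so \(A'\) is quadratic. Finally, the normal element \(g\) yields a change-of-rings spectral sequence comparing \(\Ext^{\bullet}_{A'}(\bfk, -)\) with \(\Ext^{\bullet}_{B}(\bfk, -)\), from which one reads off \(\gldim A' = \gldim B + 1 = 3\), the Gorenstein condition \(\Ext^{i}_{A'}(\bfk, A') \simeq \delta_{i,3}\,\bfk\), and the Koszul shape \eqref{eq: minimal resolution of the simple module} of the minimal resolution of \(\bfk_{A'}\); that is, \(A'\) is three-dimensional quadratic AS-regular.

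The step I expect to be the main obstacle is the first one: proving that the point scheme \(\Gamma\) is literally the graph of an automorphism of \(E\) --- so that every truncated point module extends uniquely to all higher degrees --- rather than a correspondence with positive-dimensional fibres over \(\bP^{2}\). This is precisely where the AS-regularity hypotheses, and in particular the Gorenstein condition encoded in \eqref{eq: minimal resolution of the simple module}, are genuinely needed, via a multilinearization of the minimal resolution, and it forces a case analysis over the possible geometries of \(E\) (a smooth plane cubic, a singular or non-reduced cubic, or all of \(\bP^{2}\)). Once \(\Gamma\) is identified, the isomorphism \(A \cong A(E,\sigma,\cL)\), the recognition of \(E\) as a cubic, and the list of admissible \(\cL\) reduce to Hilbert-series accounting together with the analysis of the normal element \(g\).
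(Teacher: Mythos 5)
The paper does not prove this theorem; it is imported verbatim from Artin--Tate--Van den Bergh \cite{ATV1}, and your outline faithfully reproduces the actual ATV1 strategy (multilinearized relations and truncated point modules, identification of the point scheme \(\Gamma\subset\bP^{2}\times\bP^{2}\) as the graph of an automorphism of \(E\), Hilbert-series comparison with \(B(E,\sigma,\cL)\), and the normal degree-three element \(g\)), correctly isolating the genuinely hard step. One inaccuracy worth fixing: the hypothesis \(\sigma^{\ast}\cL\ncong\cL\) is not what makes \(g\) nonzero --- even when \(\sigma^{\ast}\cL\cong\cL\) the algebra \(A(E,\sigma,\cL)\) is a twisted polynomial ring with Hilbert series \((1-t)^{-3}\) surjecting onto \(B\) with one-dimensional kernel in degree \(3\), so \(g\neq 0\) regardless. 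The real role of \(\sigma^{\ast}\cL\ncong\cL\) is to guarantee that \(\sigma\) does not extend to a linear automorphism of \(\bP^{2}\); otherwise the three quadratic relations cutting out the graph of \(\sigma\vert_{E}\) coincide with the twisted commutators of that extension, the point scheme of \(A(E,\sigma,\cL)\) is all of \(\bP^{2}\), and the triple is secretly of type (1), so the condition is what separates the two types rather than what makes \(g\) nontrivial.
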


Let \( ( E, \sigma, \cL ) \) be a geometric triple of type \( (2) \) as in the above theorem.
Then the \emph{twisted homogeneous coordinate ring} associated to the triple \( (E, \sigma , \cL) \) is a \(\bZ\)-graded algebra \(B( E, \sigma, \cL )\) which is constructed in the following way.
\begin{equation*}
    B( E, \sigma, \cL ) = \bigoplus_{n} B_{ n }
\end{equation*}
where 
\(
    B_{ n }
    =
    H^{ 0 } ( E, \cL_{ n })
\),
\(
    \cL_{ n }
    =
    \cL\otimes \cL^{ \sigma } \otimes \cdots \otimes \cL^{ \sigma^{ n - 1 } }
\)
and 
\(
    \cL^{ \sigma^{ \ell } }
    =
    (\sigma^{ \ell })^{ \ast } \cL
\).
The multiplication is defined by 
\begin{align*}
    B_{ n }
    \otimes 
    B_{ m }
    &
    =
    H^{ 0 } ( E, \cL_{ n } )
    \otimes 
    H^{ 0 } ( E, \cL_{ m } )
    \\
    &
    \simeq
    H^{ 0 } ( E, \cL_{ n } )
    \otimes 
    H^{ 0 } ( E, \cL_{ m }^{ \sigma^{ n } } )
    \overset{\otimes}{\rightarrow}
    H^{ 0 } ( E, \cL_{ n+ m } )
    =
    B_{ n+m }.
\end{align*}
The isomorphism in the second row is obtained by 
\(
    \sigma^{ n\ast }
    \colon 
    H^{ 0 } ( E, \cL_{ m } )
    \simeq 
    H^{ 0 } ( E, \cL_{ m }^{ \sigma^{ n } } ).
\)
By the definition of the algebras \( A( E, \sigma, \cL ) \) and \( B(E, \sigma, \cL) \), 
there is a canonical map 
\( 
    A( E, \sigma, \cL )
    \rightarrow
    B( E, \sigma, \cL )
\).

\begin{proposition}\label{proposition: fundamental results for AS regular algebra}
    \begin{enumerate}
        \item The canonical map \( A(E, \sigma, \cL) \to B( E, \sigma, \cL ) \) is surjective.
        \item The kernel of the canonical map is generated by an element \( g \) of degree three in the center of \( A( E, \sigma, \cL ) \) and \(g\) is unique up to scalar multiplication.
        \item There exists an equivalence 
                \begin{align}
                    \Gamma_{ \ast }
                    \colon 
                    \coh ( E )
                    &
                    \to
                    \qgr ( B( E, \sigma , \cL ) )
                    \label{eq:eqiuvalence of coh and qgr}
                    \\
                    \cE
                    &
                    \mapsto
                    \bigoplus_{ n\in \bZ } 
                    H^{ 0 }
                    \left(
                        E, \cE \otimes\bigotimes_{ i = 0 }^{ n - 1 } \sigma^{ i\ast } \cL 
                    \right).\label{eq:explicite description of equivalence of qgr and coh}
                \end{align}
    \end{enumerate}
\end{proposition}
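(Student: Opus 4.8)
The plan is to prove the (equivalent) statement that $E^{\mathrm{sp}}$ is contained in a proper subset of $E$ — a single point of $E\setminus E^{\mathrm{sp}}$ is all that is needed. I would first note that the hypothesis forces $\sigma$ to be a translation: since $\Aut(E)=E\rtimes\Aut(E,o)$ with $\Aut(E,o)$ finite and $\sum_{i=0}^{m-1}\psi^{i}=0$ in $\operatorname{End}(E)$ for every $\psi\in\Aut(E,o)$ of order $m>1$, an automorphism of infinite order is $\sigma=t_{\tau}$ with $\tau\in E$ of infinite order; in particular $\sigma^{*}$ acts trivially on $\picardgroup^{0}(E)$ and each $\sigma^{k}$ ($k\neq 0$) is fixed-point-free. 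I keep the notation $g\in A$ for the central degree-$3$ element with $A/gA=B=B(E,\sigma,\cL)$ from \cref{proposition: fundamental results for AS regular algebra}, and identify $\qgr(B)\simeq\coh(E)$ through \eqref{eq:eqiuvalence of coh and qgr}, so that $\pi B(n)$ corresponds to $\cL_{n}=\cL\otimes\cL^{\sigma}\otimes\dots\otimes\cL^{\sigma^{n-1}}$ and the degree shift $(1)$ corresponds to the autoequivalence $\cF\mapsto\sigma^{*}\cF\otimes\cL$.

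Now let $M\in\boundedderived\qgr(A)$ with $\bL j^{*}M\neq 0$ and $\Supp(\bL j^{*}M)=\{p\}$. The first step is to show that $M$ is a torsion object of $\qgr(A)$, i.e. every cohomology sheaf has Gelfand--Kirillov dimension $\leq 2$: restricting to the generic point of $E$ annihilates $\bL j^{*}M$, so all cohomology sheaves of $\bL j^{*}M$ are torsion on $E$; using $0\to A(-3)\xrightarrow{g}A\to B\to 0$ one sees $\bL j^{*}\pi N$ is built out of $\pi(N/Ng)$ and a shift of $\pi(N[g])$, and since $g$ is a central non-zero-divisor a cohomology module of dimension $3$ would contribute $N/Ng$ of dimension $2$, restricting to a non-torsion sheaf on $E$. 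The second step, after a d\'evissage, is to control the dimension-$\leq 1$ part and the pure dimension-$2$ part of $M$ separately. For $\sigma$ of infinite order, the structure theory of finitely generated graded modules over three-dimensional AS-regular algebras (\cite[Propositions 7.5, 7.9]{ATV2}, on which \cref{proposition:skyscraper sheaves} also rests) gives that every dimension-$\leq 1$ object of $\qgr(A)$ is a finite iterated extension of the point objects $j_{*}\cO_{q}$ ($q\in E$), and every pure dimension-$2$ object is a sheaf on a noncommutative curve $V(f)=\qgr(A/fA)$, $f\in A_{d}$, $d\geq1$. From the resolution of a point module one computes $\bL j^{*}(j_{*}\cO_{q})\simeq\cO_{q}\oplus\cO_{\sigma^{3}q}[1]$, whose support $\{q,\sigma^{3}q\}$ has two points because $\sigma^{3}$ is fixed-point-free; hence no nonzero dimension-$\leq 1$ summand restricts to a single point, so such summands cannot occur in $M$. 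From $0\to A(-d)\xrightarrow{f}A\to A/fA\to 0$ one gets that $\bL j^{*}$ of a sheaf on $V(f)$ is a complex whose support on $E$ lies in $\Supp D_{f}$, where $D_{f}\in|\cL_{d}|$ is the divisor of $\bar f\in B_{d}=H^{0}(E,\cL_{d})$ (and $\bar f\neq 0$, as $\bL j^{*}M$ is torsion on $E$). Combining these — and using that on a curve of positive genus no nonzero principal divisor is supported at a single point, so that cancellation between the two $\operatorname{Tor}$-rows of $\bL j^{*}M$, or among the curve pieces, cannot shrink the support below the union of the $\Supp D_{f}$ — one concludes that $\Supp(\bL j^{*}M)=\{p\}$ forces $D_{f}=3d\cdot p$ for every curve occurring in $M$, i.e.
\[
    \cO_{E}(3d\cdot p)\simeq\cL_{d}\qquad\text{for some }d\geq 1 .
\]

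It then remains to check that $T\coloneqq\{\,p\in E : \cO_{E}(3d\cdot p)\simeq\cL_{d}\text{ for some }d\geq 1\,\}$ is not all of $E$. Writing $\xi=[\cL^{\sigma}\otimes\cL^{-1}]$ and $\mu=[\cL\otimes\cO_{E}(-3o)]$ in $\picardgroup^{0}(E)$, triviality of $\sigma^{*}$ on $\picardgroup^{0}(E)$ gives $[\cL_{d}\otimes\cO_{E}(-3d\,o)]=d\mu+\binom{d}{2}\xi$; under the Abel--Jacobi isomorphism $\picardgroup^{0}(E)\simeq E(\bfk)$, $p\mapsto\beta(p)=[\cO_{E}(p-o)]$, the relation above reads $3d\cdot\beta(p)=d\mu+\binom{d}{2}\xi$, so $6\cdot\beta(p)\in\bZ\mu+\bZ\xi+\picardgroup^{0}(E)_{\mathrm{tors}}$, a subgroup of $\bQ$-rank $\leq 2$. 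Since $\picardgroup^{0}(E)\simeq E(\bfk)$ has infinite $\bQ$-rank for $\bfk$ algebraically closed of characteristic zero, this subgroup — hence $T$ — is proper, and $E\setminus E^{\mathrm{sp}}\supseteq E\setminus T\neq\emptyset$. (As a consistency check for the $[M]\neq 0$ case, $\bL j^{*}\cO(i)$ corresponds to $\cL_{i}$ by \cref{theorem: full strong exceptional collection on fake p2} and \eqref{eq:eqiuvalence of coh and qgr}, and equating determinants in the Grothendieck group of $E$ of an object supported at $\{p\}$ yields the same kind of relation between $\cL$, $\cL^{\sigma}$ and $\cO_{E}(p)$.)

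The step I expect to be the real work is the second one: identifying precisely which objects of Gelfand--Kirillov dimension $\leq 2$ can occur in $M$ (this is where the \cite{ATV2} structure theory and the infinite order of $\sigma$ are essential), and ruling out that a cancellation inside the complex $M$ concentrates $\Supp(\bL j^{*}M)$ onto fewer points than the divisors $D_{f}$ of its curve pieces would predict. Once that is in place, Steps 1 and 3 are routine.
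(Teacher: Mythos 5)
Your proposal does not address the statement it was supposed to prove. The statement is \cref{proposition: fundamental results for AS regular algebra}: surjectivity of the canonical map \(A(E,\sigma,\cL)\to B(E,\sigma,\cL)\), the fact that its kernel is generated by a central element \(g\) of degree three (unique up to scalar), and the equivalence \(\Gamma_{\ast}\colon\coh(E)\simeq\qgr(B(E,\sigma,\cL))\). What you wrote is instead an attempted proof of \cref{proposition:support of E circ} (nonemptiness of \(E\setminus E^{\mathrm{sp}}\)), and worse, it is circular with respect to the actual target: in your first paragraph you explicitly ``keep the notation \(g\in A\) for the central degree-\(3\) element with \(A/gA=B\) from \cref{proposition: fundamental results for AS regular algebra}, and identify \(\qgr(B)\simeq\coh(E)\) through \eqref{eq:eqiuvalence of coh and qgr}'' --- i.e.\ you assume all three assertions of the proposition you were asked to establish. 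So as a proof of the given statement there is nothing to evaluate: no argument for surjectivity of \(A\to B\), none for the existence, centrality, degree, or uniqueness of \(g\), and none for the equivalence \eqref{eq:eqiuvalence of coh and qgr}.

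For comparison, the paper does not reprove these facts either: its ``proof'' is a citation to \cite[Theorem~2]{ATV1} and \cite[Theorem~1.3]{MR1067406}. These are substantial structural results (the analysis of the point scheme of a three-dimensional quadratic AS-regular algebra, the identification of \(B\) as the twisted homogeneous coordinate ring, and Artin--Van den Bergh's \(\Proj\)-type theorem for \(\sigma\)-ample sheaves); a self-contained proof would require developing that machinery, not the K-theoretic and support-theoretic considerations in your write-up. If you intend your text as a proof of \cref{proposition:support of E circ}, it should be resubmitted against that statement, where it would then have to be measured against the paper's decomposition \(E^{\mathrm{sp}}=E_{1}\cup E_{2}\) and the arguments of \cref{lemma:support of E_1} and \cref{lemma:support of E_2}; as it stands, for the present statement the proposal is off target.
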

\begin{proof}
    See \cite[{Theorem~2}]{ATV1} and \cite[{Theorem~1.3}]{MR1067406}.
\end{proof}
For simplicity, we write 
\(
    A( E, \sigma, \cL )
\) 
and 
\(
    B(E, \sigma, \cL)
\) 
as 
\( 
    A 
\) 
and 
\( 
    B 
\) 
respectively.

\begin{definition}\label{def:right multiplication of g}
    Let \(g\) be a central element of 
    \( 
        A 
    \) 
    as in \cref{proposition: fundamental results for AS regular algebra} and let 
    \(
        M
    \) 
    be a graded 
    \(
        A
    \)-modules.
    The morphism of graded 
    \( 
        A 
    \)-module 
    \begin{equation}\label{eq:right multiplication of g}
        \cdot g 
        \colon 
        M( -3 )
        \to 
        M
    \end{equation}
    is defined by 
    \(
        m
        \mapsto 
        mg
    \).
\end{definition}

\begin{definition}\label{def:the module K n}
    Let \(M\) be a graded \(A\)-module.
    The graded \(A\)-module \(K_{ M, n }\) is defined by 
    \begin{equation}
        K_{M, n }
        \coloneqq
        \ker
        \left(
            g^{n}
            \colon
            M
            \to
            M( 3n )
        \right)
    \end{equation}
    for \(n\in \bZ_{>0}\). We also define \(K_{ M, \infty }\coloneqq \bigcup_{n\in\bZ_{>0}} K_{ M, n }\).
    If \(M\) is clear, we simply write \(K_{ n }\) and \(K_{ \infty }\) instead of \(K_{ M, n }\) and \(K_{ M, \infty }\) respectively.
\end{definition}

By \cref{proposition: fundamental results for AS regular algebra}, 
we have \( A/g \simeq B \).
There exists a pair of adjoint functors
\begin{equation*}
    j^{ \ast } 
    \colon
    \qgr( A )
    \rightleftarrows
    \qgr( B )
    \overset{\eqref{eq:eqiuvalence of coh and qgr}}
    {\simeq} 
    \coh(E)
    \colon
    j_{ \ast }
\end{equation*}
where \( j^{ \ast } \) corresponds to the extension of scalars 
\(
    M 
    \mapsto 
    M
    \otimes_{ A } {}_{ A } B_{ B } 
\) 
and 
\( 
    j_{ \ast }
\) 
corresponds to the restriction of scalars.
The functor 
\( 
    j_{ \ast } 
\) 
is fully faithful and exact.
Moreover, we have a pair of adjoint functors
\begin{equation}\label{eq:adunction of j}
    \bL j^{ \ast } 
    \colon
    \boundedderived
    \qgr( A )
    \rightleftarrows
    \boundedderived
    \qgr( B )
    \simeq 
    \dbcoh(E)
    \colon
    \bR j_{ \ast } = j_{ \ast }.
\end{equation}
By \eqref{eq:adunction of j}, we have an exact triangle 
\begin{equation}
    M(-3)
    \xrightarrow{\cdot g}
    M
    \xrightarrow{\eta_{ M }}
    j_{ \ast } \bL j^{ \ast } M
    \to
    M( -3 )[ 1 ]
\end{equation}
where 
\(
    M
    \in
    \boundedderived \qgr( A )
\)
and \( \eta \) is the unit of the adjunction \eqref{eq:adunction of j}.
By the long exact sequence of this triangle, we obtain the following lemma.
\begin{lemma}\label{lemma:derived pullback of graded modules}
    Let \(M\in \grmod( A )\). Then we have 
    \begin{enumerate}
        \item \( \bL^{ 1 } j^{\ast } M \simeq j^{ \ast } \left( K_{ M, 1 }( -3 ) \right) \),
        \item \( \bL^{ k }j^{\ast} M = 0 \) if \(k\neq 0, 1\).
    \end{enumerate}
    In particular, if \(M\) has no \(g\)-torsion element, \(\bL^{1}j^{\ast} M = 0\).
\end{lemma}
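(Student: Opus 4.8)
The plan is to read off everything from the long exact cohomology sequence of the exact triangle
\[
    M(-3) \xrightarrow{\cdot g} M \xrightarrow{\eta_M} j_{\ast} \bL j^{\ast} M \to M(-3)[1]
\]
recorded just above the statement, where \(M \in \grmod(A)\) is regarded as an object of \(\qgr(A) \subset \boundedderived\qgr(A)\) via \(\pi\). Since \(j_{\ast}\) is exact and fully faithful it is conservative and commutes with cohomology, so for every \(k\) we have \(j_{\ast}(\bL^{k} j^{\ast} M) \simeq \mathcal{H}^{-k}(j_{\ast} \bL j^{\ast} M)\), the right-hand side being the \((-k)\)-th cohomology object, computed in \(\qgr(A)\), of the cone of \(\cdot g\). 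It therefore suffices to compute these cohomology objects.

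First I would use that \(\pi M\) and \(\pi(M(-3))\) are concentrated in cohomological degree \(0\). Feeding this into the long exact sequence of the triangle forces \(\mathcal{H}^{k}(j_{\ast} \bL j^{\ast} M) = 0\) for all \(k \notin \{0, -1\}\), and by conservativity of \(j_{\ast}\) this is precisely part \((2)\). In degree \(k = -1\) the same sequence produces a short exact sequence in \(\qgr(A)\)
\[
    0 \to \mathcal{H}^{-1}(j_{\ast} \bL j^{\ast} M) \to \pi(M(-3)) \xrightarrow{\cdot g} \pi M,
\]
whence \(\mathcal{H}^{-1}(j_{\ast} \bL j^{\ast} M) \simeq \ker\bigl(\pi(M(-3)) \xrightarrow{\cdot g} \pi M\bigr)\).

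It then remains to identify this kernel with \(j^{\ast}(K_{M,1}(-3))\). Shifting the defining exact sequence \(0 \to K_{M,1} \to M \xrightarrow{g} M(3)\) by \(-3\) identifies \(K_{M,1}(-3)\) with \(\ker(g \colon M(-3) \to M)\) in \(\Grmod(A)\); this module lies in \(\grmod(A)\) because \(A\) is noetherian, and exactness of \(\pi\) gives \(\pi(K_{M,1}(-3)) \simeq \ker(g \colon \pi(M(-3)) \to \pi M)\). Moreover \(K_{M,1}(-3)\) is annihilated by \(g\), hence is the restriction of scalars of a finitely generated graded \(B\)-module, corresponding under \(\qgr(B) \simeq \coh(E)\) to a coherent sheaf \(\mathcal{F}\) with \(j^{\ast}(\pi(K_{M,1}(-3))) \simeq \mathcal{F}\) and \(j_{\ast}\mathcal{F} \simeq \pi(K_{M,1}(-3))\) (compatibility of \(j^{\ast}\) and \(j_{\ast}\) with \(\pi\) and with extension/restriction of scalars). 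Combining the isomorphisms, \(j_{\ast}(\bL^{1} j^{\ast} M) \simeq \pi(K_{M,1}(-3)) \simeq j_{\ast}\mathcal{F}\), and full faithfulness of \(j_{\ast}\) yields \(\bL^{1} j^{\ast} M \simeq \mathcal{F} = j^{\ast}(K_{M,1}(-3))\), which is part \((1)\). The final assertion is then immediate: if \(M\) has no \(g\)-torsion then \(g \colon M \to M(3)\) is injective, so \(K_{M,1} = 0\) and hence \(\bL^{1} j^{\ast} M = 0\).

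Since the whole argument is formal, I do not expect a genuine obstacle; the only point requiring care is the bookkeeping in the last paragraph — keeping the three functors \(\pi\), extension of scalars (\(j^{\ast}\)), and restriction of scalars (\(j_{\ast}\)) straight, checking that \(\pi(K_{M,1}(-3))\) genuinely lies in the essential image of \(j_{\ast}\), and then descending the isomorphism from \(\qgr(A)\) down to \(\coh(E)\) via full faithfulness of \(j_{\ast}\).
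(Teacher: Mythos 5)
Your proposal is correct and follows exactly the route the paper intends: the paper derives the lemma directly from the long exact cohomology sequence of the triangle \(M(-3)\xrightarrow{\cdot g}M\to j_{\ast}\bL j^{\ast}M\to M(-3)[1]\), using exactness and full faithfulness (hence conservativity) of \(j_{\ast}\) to descend the identifications to \(\coh(E)\). The extra bookkeeping you supply — identifying \(\ker(\cdot g)\) with \(\pi(K_{M,1}(-3))\), noting it is a \(B\)-module so that underived \(j^{\ast}\) recovers it — is exactly the content the paper leaves implicit.
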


The direct computation shows that:
\begin{lemma}\label{lemma: explicit desdription of grade shift on coh E}
    The shift of gradings \eqref{eq: shift} is compatible with the autoequivalence of \(\coh ( E ) \) 
    \begin{equation}\label{eq:shift of grading on coh E}
        \sigma_{ \ast } 
        \left(
            -\otimes \cL
        \right)
    \end{equation}
    via the equivalence 
    \(
        \coh( E )
        \simeq 
        \qgr( B )
    \) 
    as in \eqref{eq:eqiuvalence of coh and qgr}. i.e., there exists an isomorphism
    \begin{equation}
        \Gamma_{ \ast } ( \cE ) ( 1 )
        \simeq 
        \Gamma_{ \ast } ( \sigma_{ \ast }( \cE\otimes \cL ) )
    \end{equation}
    where 
    \(
        \cE \in \coh( E )
    \).
    In particular, we have an isomorphism
    \begin{equation}\label{eq:action of shift of gradings on skyscraper sheaves}
        \left(
            j_{ \ast }\cO_{ p }
        \right)
        ( n )
        \simeq
        j_{ \ast }
        \left(
            \cO_{ \sigma^{ n }( p ) }
        \right)
    \end{equation}
    for any \( p \in E\) and \(n\in \bZ\).
\end{lemma}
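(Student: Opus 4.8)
The plan is to reduce \cref{lemma: explicit desdription of grade shift on coh E} to constructing a natural isomorphism of graded $B$-modules
\[
    \Gamma_{\ast}(\cE)(1)\simeq\Gamma_{\ast}(\sigma_{\ast}(\cE\otimes\cL)),\qquad \cE\in\coh(E),
\]
and then iterating it and specializing to skyscrapers. Throughout I keep the abbreviation $\cL_{n}=\bigotimes_{i=0}^{n-1}\sigma^{i\ast}\cL$ from the construction of $B$ before \cref{proposition: fundamental results for AS regular algebra}, so that \eqref{eq:explicite description of equivalence of qgr and coh} reads $\Gamma_{\ast}(\cE)_{n}=H^{0}(E,\cE\otimes\cL_{n})$ and in particular $B_{n}=H^{0}(E,\cL_{n})$.

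First I would compare the two sides degree by degree. By definition of the shift \eqref{eq: shift}, the left-hand side in internal degree $n$ is $\Gamma_{\ast}(\cE)_{n+1}=H^{0}(E,\cE\otimes\cL_{n+1})$. Using the factorization $\cL_{n+1}=\cL\otimes\sigma^{\ast}\cL_{n}$, the projection formula for the automorphism $\sigma$ in the form $\cF\otimes\sigma^{\ast}\cG\simeq\sigma^{\ast}(\sigma_{\ast}\cF\otimes\cG)$ (valid since $\sigma^{\ast}\sigma_{\ast}\simeq\mathrm{id}$), and the tautology $H^{0}(E,\sigma^{\ast}(-))=H^{0}(E,-)$, one obtains
\[
    H^{0}(E,\cE\otimes\cL_{n+1})=H^{0}(E,(\cE\otimes\cL)\otimes\sigma^{\ast}\cL_{n})\simeq H^{0}(E,\sigma_{\ast}(\cE\otimes\cL)\otimes\cL_{n})=\Gamma_{\ast}(\sigma_{\ast}(\cE\otimes\cL))_{n}.
\]

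The step I expect to be the main obstacle is to check that these degreewise isomorphisms assemble into an isomorphism of graded $B$-modules, i.e.\ that they are compatible with the multiplication maps $\Gamma_{\ast}(\cE)_{n}\otimes B_{m}\to\Gamma_{\ast}(\cE)_{n+m}$. As with the multiplication of $B$ recalled before \cref{proposition: fundamental results for AS regular algebra}, this map first applies the twist $\sigma^{n\ast}\colon B_{m}=H^{0}(E,\cL_{m})\xrightarrow{\ \sim\ }H^{0}(E,\sigma^{n\ast}\cL_{m})$ and then multiplies sections along $\cL_{n}\otimes\sigma^{n\ast}\cL_{m}\simeq\cL_{n+m}$; so one must chase a square whose vertical maps are the isomorphisms above and whose horizontal maps are these multiplications. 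Commutativity reduces to the compatibility of the projection-formula isomorphism and of $\sigma^{\ast}$ with products of global sections, both of which are standard, but keeping track of the $\sigma^{n\ast}$-twists is where the care is needed. Naturality in $\cE$ is then automatic, because $\sigma^{\ast}$, $\sigma_{\ast}$, $-\otimes\cL$, $H^{0}(E,-)$ and the projection formula are functorial. (Alternatively, this compatibility is already implicit in the construction of the equivalence \eqref{eq:eqiuvalence of coh and qgr}.)

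Finally I would iterate: applying the basic isomorphism $n$ times and using the projection formula once more yields $\cE(n)\simeq\sigma_{\ast}^{n}(\cE\otimes\cL_{n})$ in $\coh(E)\simeq\qgr(B)$, for all $n\in\bZ$ (the negative case following by applying the inverse autoequivalence). Specializing to $\cE=\cO_{p}$ gives $\cO_{p}\otimes\cL_{n}\simeq\cO_{p}$, the right-hand side being a length-one sheaf supported at $p$, and $\sigma_{\ast}^{n}\cO_{p}\simeq\cO_{\sigma^{n}(p)}$, whence $\cO_{p}(n)\simeq\cO_{\sigma^{n}(p)}$ in $\qgr(B)$. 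Applying $j_{\ast}$, which is the restriction of scalars along the surjection $A\to B$ and so does not alter underlying graded vector spaces and in particular commutes with the shift of gradings \eqref{eq: shift}, gives $(j_{\ast}\cO_{p})(n)\simeq j_{\ast}(\cO_{\sigma^{n}(p)})$, which is \eqref{eq:action of shift of gradings on skyscraper sheaves}.
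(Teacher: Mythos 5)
Your argument is correct and is precisely the ``direct computation'' that the paper invokes without writing out: the degreewise identification $H^{0}(E,\cE\otimes\cL_{n+1})\simeq H^{0}(E,\sigma_{\ast}(\cE\otimes\cL)\otimes\cL_{n})$ via $\cL_{n+1}=\cL\otimes\sigma^{\ast}\cL_{n}$ and the projection formula, the check of compatibility with the $\sigma^{n\ast}$-twisted multiplication of $B$, and the specialization to $\cO_{p}$ together with the observation that $j_{\ast}$ (restriction of scalars) commutes with the grading shift. No gaps; this matches the paper's intended proof.
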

Let us introduce the explicit description of the Serre functor of 
\(
    \boundedderived \qgr( A )
\) 
for a very general three-dimensional AS-regular quadratic algebra \(A\).

\begin{theorem}\label{theorem:Serre functor of qgr(A)}
    If the automorphism \(\sigma\) is a translation and \(E\) is a smooth elliptic curve 
    then the Serre functor \( S_{ A } \) of \( \boundedderived\qgr(A) \) is of the form
    \begin{equation*}
        S_{ A } ( M )
        =
        M (-3) [ 2 ].
    \end{equation*}
\end{theorem}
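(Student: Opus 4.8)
The plan is to read off the shift and twist from noncommutative Serre duality for AS-regular algebras, and then to show that the Nakayama automorphism of \(A\) acts trivially on \(\qgr(A)\) under our hypotheses. First I would note that \(\boundedderived\qgr(A)\) admits a Serre functor at all: by \cref{theorem: full strong exceptional collection on fake p2} it carries a full (strong) exceptional collection, hence is equivalent to \(\boundedderived(\operatorname{mod}\Lambda)\) for a finite-dimensional \(\bfk\)-algebra \(\Lambda\) of finite global dimension, and such a category is smooth and proper, so its Serre functor exists and is unique. It therefore suffices to exhibit one exact autoequivalence satisfying the defining bifunctorial isomorphism and to identify it with \(M\mapsto M(-3)[2]\).

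Next I would invoke noncommutative Serre duality for connected graded AS-regular algebras. From the minimal resolution \eqref{eq: minimal resolution of the simple module} one reads off \(\gldim A=3\) and that the Gorenstein parameter of \(A\) equals \(3\), so the balanced dualizing complex of \(A\) is \(A_{\nu}(-3)[3]\), where \(\nu\) is the Nakayama automorphism of \(A\); transferring Serre duality to the quotient category then gives
\[
    S_{A}(M)\;\simeq\;M_{\nu}(-3)[2],
\]
where \(M_{\nu}\) denotes the twist of \(M\) along \(\nu\), regarded as an autoequivalence of \(\qgr(A)\) induced by the graded algebra automorphism \(\nu\) (it preserves \(\tors(A)\)). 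So it remains to prove that this Nakayama twist is isomorphic to the identity functor of \(\qgr(A)\).

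This is where the hypotheses enter. When \(E\) is a smooth elliptic curve and \(\sigma\) is a translation, the classification (\cref{theorem: AS quads are isom to geometric algebra}) identifies \(A=A(E,\sigma,\cL)\) with a (Zhang twist of a) three-dimensional Sklyanin algebra; such algebras are defined by a genuinely cyclic superpotential of degree three and hence are graded Calabi--Yau of global dimension \(3\), i.e.\ their Nakayama automorphism is the identity. Since passing to a Zhang twist leaves \(\qgr(A)\) together with its shift functor unchanged, one reduces to the Sklyanin case and obtains \(M_{\nu}\simeq M\) naturally, whence \(S_{A}(M)\simeq M(-3)[2]\). A more hands-on alternative, closer to the tools already developed, is geometric: \(\nu\) fixes the central element \(g\) up to a scalar, so it descends to a graded automorphism of \(B=A/g\) and hence to an automorphism of \(E\) commuting with \(\sigma\); because \(\sigma\) is a translation and \(\sigma^{\ast}\cL\not\simeq\cL\), this pins \(\nu|_{E}\) down, and a direct computation of \(\nu\) from the defining relations shows it is trivial, so that \(\nu\) only rescales the degree-one generators. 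Such an automorphism twists every graded module into an isomorphic one, compatibly with the grading shift (which one can match on the \(\coh(E)\) side via \cref{lemma: explicit desdription of grade shift on coh E}), giving again \(M_{\nu}\simeq M\).

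I expect the main obstacle to be precisely this last step, the triviality of the Nakayama twist: either establishing the Calabi--Yau property of the relevant algebras, or computing \(\nu\) geometrically, while keeping the twist \((-3)\) and the shift \([2]\) bookkeeping consistent. It is also the only place where ``\(\sigma\) a translation'' is genuinely used — for a non-translation automorphism of an elliptic \(E\) the Nakayama automorphism is nontrivial and \(S_{A}\) acquires an extra autoequivalence factor.
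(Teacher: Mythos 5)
The paper gives no independent argument here—it simply cites Van den Bergh's dualizing-complex results and De Naeghel--Van den Bergh's Theorem~A.4, whose proofs run exactly along the lines you sketch: the balanced dualizing complex \(A_{\nu}(-3)[3]\) of a three-dimensional AS-regular algebra yields \(S_A = (-)_{\nu}(-3)[2]\) on \(\boundedderived\qgr(A)\), and the Nakayama twist is trivial because the three-dimensional Sklyanin algebras (the case \(E\) smooth elliptic, \(\sigma\) a translation) are graded Calabi--Yau. Your proposal is correct and takes essentially the same route as the references underlying the paper's proof.
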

\begin{proof}
    See 
    \cite[Corollaries~9.3, 9.4]{VANDENBERGH1997662}
    and 
    \cite[Theorem~A.4]{denaeghel2005idealclassesdimensionalsklyanin}.
\end{proof}

Moreover, it is known that there exists a full exceptional collection which is a noncommutative analogue of Beilinson's theorem, which addresses the case of \(\bP^{2}\).

\begin{theorem}\label{theorem: full strong exceptional collection on fake p2}
    Let 
    \( 
        \cO ( n )
        \coloneqq
        \pi ( A( n ) )
    \).
    The sequence 
    \(
        (
            \cO, \cO(1), \cO(2)
        )
    \) 
    is a full strong exceptional collection in 
    \(
        \boundedderived \qgr( A )
    \).
    In particular, 
    \(
        \boundedderived \qgr( A )
    \)
    has a tilting object.
\end{theorem}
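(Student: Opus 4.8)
The plan is to verify the two defining properties of a full strong exceptional collection separately: strong exceptionality, which amounts to a cohomology computation, and fullness, which is essentially formal. Throughout I would use that a three-dimensional AS-regular algebra $A$ is a Noetherian domain; that, by the minimal resolution \eqref{eq: minimal resolution of the simple module}, $A$ is AS-Gorenstein with Gorenstein parameter $3$ and has Hilbert series $1/(1-t)^3$, so $\dim_{\bfk} A_n = \binom{n+2}{2}$, and in particular $A_n = 0$ for $n < 0$; and, consequently, that the graded local cohomology $H^{i}_{A_{>0}}(A)$ vanishes for $i \neq 3$, while $H^{3}_{A_{>0}}(A)$ has graded piece of dimension $\dim_{\bfk} A_{-n-3}$ in internal degree $n$, hence is supported in internal degrees $\leq -3$.

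First I would compute $\RHom_{\qgr(A)}(\cO, \cO(n))$ for all $n \in \bZ$. Feeding the vanishing above into the standard comparison between the $\Ext$-groups $\Ext^{\bullet}_{\qgr(A)}(\cO, \cO(n))$ and the local cohomology $H^{\bullet+1}_{A_{>0}}(A)$ (valid because $A$ satisfies Artin--Zhang's condition $\chi$; the resulting formulas, which mirror Serre's computation on $\bP^2$, are recorded in \cite{ATV1, ATV2}), one gets $\Ext^{0}_{\qgr(A)}(\cO, \cO(n)) = A_n$, $\Ext^{1}_{\qgr(A)}(\cO, \cO(n)) = 0$, and $\dim_{\bfk}\Ext^{2}_{\qgr(A)}(\cO, \cO(n)) = \dim_{\bfk} A_{-n-3}$, the last being $0$ for $n \geq -2$. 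Since the grading shift is an autoequivalence, $\RHom(\cO(i), \cO(j)) \simeq \RHom(\cO, \cO(j-i))$, and these formulas give at once: $\RHom(\cO(i), \cO(i)) = \bfk[0]$; for $0 \leq i < j \leq 2$ one has $j - i \in \{1, 2\}$, so $\RHom(\cO(i), \cO(j))$ equals $A_{j-i}$, concentrated in cohomological degree $0$; and for $0 \leq j < i \leq 2$ one has $j - i \in \{-1, -2\}$, so $\Ext^{0}$, $\Ext^{1}$ and $\Ext^{2}$ all vanish and $\RHom(\cO(i), \cO(j)) = 0$. Hence $(\cO, \cO(1), \cO(2))$ is an exceptional collection all of whose $\Hom$-complexes are concentrated in degree $0$, i.e. a strong exceptional collection.

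Next, for fullness I would argue as follows. Let $\cD \subseteq \boundedderived\qgr(A)$ be the triangulated subcategory generated by $\cO, \cO(1), \cO(2)$. Twisting \eqref{eq: minimal resolution of the simple module} by $n \in \bZ$ and applying the exact functor $\pi$, which annihilates the finite-dimensional module $\bfk_A(n)$, yields an exact sequence
\[
    0 \to \cO(n-3) \to \cO(n-2)^{\oplus 3} \to \cO(n-1)^{\oplus 3} \to \cO(n) \to 0
\]
in $\qgr(A)$; read in the derived category, it shows that $\cO(n)$ lies in the triangulated subcategory generated by $\cO(n-1), \cO(n-2), \cO(n-3)$ and, symmetrically, that $\cO(n-3)$ lies in the one generated by $\cO(n), \cO(n-1), \cO(n-2)$. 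An induction in both directions then gives $\cO(n) \in \cD$ for every $n \in \bZ$. Now, since $A$ is Noetherian of global dimension $3$, every $M \in \grmod(A)$ admits a finite graded free resolution $0 \to P_3 \to P_2 \to P_1 \to P_0 \to M \to 0$ with each $P_k$ a finite direct sum of grading shifts $A(m)$; applying $\pi$ exhibits $\pi(M)$ as built by finitely many cones out of objects $\cO(m) \in \cD$, so $\pi(M) \in \cD$. As $\pi$ is essentially surjective and $\boundedderived\qgr(A)$ is generated, as a triangulated category, by the objects of $\qgr(A)$ placed in cohomological degree $0$, we conclude $\cD = \boundedderived\qgr(A)$: the collection is full. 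Finally $T := \cO \oplus \cO(1) \oplus \cO(2)$ classically generates $\boundedderived\qgr(A)$ and has $\RHom(T, T)$ concentrated in degree $0$, so it is a tilting object.

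The step I expect to be the real content is the cohomology computation of the second paragraph, which rests on AS-Gorenstein duality together with condition $\chi$ for $A$. Both are standard facts about three-dimensional AS-regular algebras, but they are exactly the nontrivial input: the remaining ingredients --- the Koszul-type four-term resolution of $\cO(n)$ extracted from \eqref{eq: minimal resolution of the simple module}, and the finite-free-resolution argument for generation --- are formal once one knows that $A$ is Noetherian of global dimension $3$.
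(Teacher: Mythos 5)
Your argument is correct, and it is the standard proof of the noncommutative Beilinson theorem; the paper itself offers no argument here, deferring entirely to \cite[Theorem~7.1]{abdelgadir2014compactmodulinoncommutativeprojective}, so your write-up is strictly more self-contained than what the paper provides. Both halves check out: the cohomology formulas \(\Ext^{0}(\cO,\cO(n))=A_{n}\), \(\Ext^{1}=0\), \(\dim\Ext^{2}(\cO,\cO(n))=\dim A_{-n-3}\) follow from AS-Gorenstein local duality plus condition \(\chi\) exactly as you say (though these are really recorded in Artin--Zhang's \emph{Noncommutative projective schemes} rather than in ATV1/ATV2, so the citation should be adjusted), and the fullness argument via the image of the Koszul-type resolution \eqref{eq: minimal resolution of the simple module} under \(\pi\), combined with finite graded free resolutions and the fact that graded projectives over a connected Noetherian graded algebra are free, is exactly the intended mechanism.
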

\begin{proof}
    See \cite[{Theorem~7.1}]{abdelgadir2014compactmodulinoncommutativeprojective}.
\end{proof}

From this theorem, there exists an isomorphism
\begin{equation}\label{eq:grothendieckgroup of A}
    \grothendieckgroup( A )
    \coloneqq
    \grothendieckgroup( \boundedderived\qgr( A )  )
    \simeq 
    \bZ^{ 3 }.
\end{equation}

%
%
\subsubsection{GK dimension}

\begin{definition}\label{def:Hilbert series}
    Let \( M \) be a finitely generated graded \( A \)-module. A \emph{Hilbert series} 
    \( 
        h_{ M }( t ) 
    \) 
    is defined by 
    \begin{equation*}
        h_M(t) = \sum_{n\in\bZ} \dim(M_n) t^n \in \bZ[[t]][t^{-1}].
    \end{equation*}
\end{definition}

\begin{example}\label{example: Hilbert series of A}
    The Hilbert series of \( A \) is given by 
    \(
        h_{ A } ( t ) 
        = 
        1/ (1-t)^{ 3 }
    \)
    because of the minimal resolution \eqref{eq: minimal resolution of the simple module}.
\end{example}

The assumption
\(
    \gldim A = 3
\)
implies that there exists a projective resolution 
\begin{equation*}\label{eq:projective resolution of graded module}
    0
    \to 
    P_{ 3 }
    \to
    P_{ 2 }
    \to
    P_{ 1 }
    \to 
    P_{ 0 }
    \to 
    M
    \to
    0,
\end{equation*}
so that
\begin{align*}
    h _{ M } ( t )
    =
    \sum
    _{
        i = 0
    }^{
        3
    }
    ( - 1 ) ^{ i }
    h _{
        P _{ i }
    }
    ( t ).
\end{align*}
Since an indecomposable projective object of
\(
   \grmod A
\)
is isomorphic to
\(
   A ( \ell )
\)
for some
\(
   \ell \in \bZ
\)
(see \cite[40]{ATV1}, \cite[339]{ATV2}), combined with~\cref{example: Hilbert series of A} this implies that
\begin{equation}\label{eq: explicit description of Hilbert series of graded modules}
    h_{M} ( t )
    =
    \frac{ r }{ ( 1 - t )^{ 3 } } 
    +
    \frac{ a }{ (1 - t )^{ 2 } }
    +
    \frac{ b }{1 - t}
    +
    f(t)
\end{equation}
for uniquely determined
\(  
    a,
    b,
    r
    \in
    \bZ
\)
and
\( 
    f( t ) \in \bZ [ t^{ \pm } ] 
\).
This immediately implies:

\begin{lemma}\label{lemma:polynomial}
    For any
    \(
       M \in \grmod A
    \)
    there exists a unique polynomial
    \(
        P_{ M } ( x )
        \in
        \bQ [ x ]
    \)
    such that
    \( 
        P_{ M } ( d ) 
        = 
        \dim M_{ d }
    \) 
    for sufficient large \( d \in \bZ \).
\end{lemma}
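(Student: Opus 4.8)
The plan is to read the polynomial off directly from the partial fraction normal form \eqref{eq: explicit description of Hilbert series of graded modules} of the Hilbert series. First I would recall the standard power series expansions
\[
    \frac{1}{(1-t)^{j}} = \sum_{n \geq 0} \binom{n+j-1}{j-1}\, t^{n} \qquad (j = 1, 2, 3),
\]
so that the coefficient of \(t^{n}\) in \(\dfrac{r}{(1-t)^{3}} + \dfrac{a}{(1-t)^{2}} + \dfrac{b}{1-t}\) equals \(r\binom{n+2}{2} + a(n+1) + b\). I would then rewrite this as the value at \(x = n\) of the rational polynomial
\[
    P_{M}(x) \coloneqq \frac{r}{2}(x+1)(x+2) + a(x+1) + b \;\in\; \bQ[x],
\]
noting that \(\binom{n+2}{2} = \tfrac12(n+1)(n+2)\).

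Next I would use that \(f(t) \in \bZ[t^{\pm}]\) is a Laurent polynomial, so there is an integer \(N\) (its top degree) beyond which the coefficient of \(t^{n}\) in \(f\) vanishes. Combining the two observations, \(\dim M_{n} = [t^{n}] h_{M}(t) = P_{M}(n)\) for all \(n > N\), which is exactly the claimed statement with ``sufficiently large'' made explicit. For uniqueness I would invoke the elementary fact that a polynomial over a field with infinitely many roots is zero: if \(Q \in \bQ[x]\) also satisfied \(Q(d) = \dim M_{d}\) for \(d \gg 0\), then \(P_{M} - Q\) would vanish at infinitely many integers, forcing \(P_{M} = Q\).

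There is essentially no serious obstacle here: the genuine content has already been packaged into the normal form \eqref{eq: explicit description of Hilbert series of graded modules} via the classification of indecomposable graded projectives and \cref{example: Hilbert series of A}, so what remains is the routine binomial-coefficient bookkeeping above together with the remark that a Laurent polynomial contributes nothing in high degree. The only point deserving a moment's care is to ensure that \(f\) has genuinely finite degree, so that a single threshold \(N\) works uniformly for all large \(n\); but this is immediate from \(f(t) \in \bZ[t^{\pm}]\).
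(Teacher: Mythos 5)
Your proposal is correct and is precisely the argument the paper leaves implicit: the paper derives \eqref{eq: explicit description of Hilbert series of graded modules} and then states that the lemma ``immediately'' follows, and your expansion of the partial fractions plus the observation that the Laurent polynomial \(f\) contributes nothing in high degree is exactly that implicit step. The uniqueness argument via infinitely many roots is also the standard (and intended) one.
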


\begin{definition}\label{def: GKdimension}
    The \emph{Gelfand-Kirillov dimension} (or the \emph{GK-dimension}) of a nontrivial graded \( A \)-module \( M \) is a pole order of Hilbert polynomial 
    \(
        h_{ M }( t )
    \) 
    at 
    \(
        t = 1
    \).
    Let
    \(
        \GKdim M 
    \)
    denote the GK-dimension of a graded module 
    \( 
        M 
    \).
\end{definition}
For
\(
    0 \neq M \in \grmod A
\)
we have the following quadchotomy by~\eqref{eq: explicit description of Hilbert series of graded modules}.
\begin{equation}\label{explicit description of GK dimension}
    \GKdim M 
    = 
    \begin{cases}
        3 & \text{ if } r > 0 \\
        2 & \text{ if } r=0 \text{ and } a > 0 \\
        1 & \text{ if } r = a = 0 \text{ and } b > 0\\
        0 & \text{otherwise}
    \end{cases}
\end{equation}
We also have
\begin{equation*}\label{eq: relationship between Gk dimension and degree of Poincare polynomial}
    \GKdim M
    =
    \deg P_{ M } ( x )
    +
    1
\end{equation*}
for the polynomial
\(
   P _{ M } ( x )
\)
as in~\cref{lemma:polynomial}. 
The degree of the zero polynomial is defined to be \( -1 \).
\begin{definition}\label{def:graded ring Lamda}
    Let us denote the localization of graded algebra \( A \) with respect to the homogeneous element \(g\) by
    \(
        \Lambda 
        \coloneqq 
        A [ g^{ -1 } ] 
    \),
    and by
    \(
        \Lambda_{ 0 }
        \subseteq
        \Lambda
    \)
    its degree
    \(
       0
    \)
    part.
\end{definition}

\begin{proposition}[{\cite[Proposition~7.5]{ATV2}}]
    The following categories are equivalent:
    \begin{enumerate}
        \item finite dimensional \( \Lambda_{ 0 } \)-modules \( V \),
        \item finitely generated graded \( A \)-modules \( N \) such that \( \dim N_{ n } \) is bounded, modulo \( g \)-torsion modules.
    \end{enumerate}
\end{proposition}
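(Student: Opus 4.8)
The plan is to obtain the equivalence as a composition of two equivalences: one coming from central localisation of \(A\) at \(g\), the other from Dade's theorem applied to the strongly graded algebra \(\Lambda=A[g^{-1}]\).

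\emph{Step 1 (localise at \(g\)).} By \cref{proposition: fundamental results for AS regular algebra}, \(g\) is central with \(A/gA\simeq B\), and since \(A\) is a Noetherian domain, \(g\) is a non-zero-divisor; hence \(A\hookrightarrow\Lambda\) is a flat central localisation and \(\Lambda\) is Noetherian. The exact functor \(N\mapsto N_{g}\coloneqq N\otimes_{A}\Lambda\) from \(\grmod A\) to finitely generated graded \(\Lambda\)-modules satisfies \((N_{g})_{n}=\varinjlim\bigl(N_{n}\xrightarrow{\cdot g}N_{n+3}\xrightarrow{\cdot g}\cdots\bigr)\), so it takes a module with \(\dim N_{n}\) bounded to one with bounded graded pieces, and it kills a finitely generated module exactly when that module is \(g\)-torsion. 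The modules \(N\in\grmod A\) with bounded Hilbert function are precisely those with \(\GKdim N\leq 1\); they form a Serre subcategory, inside which the \(g\)-torsion ones again form a Serre subcategory. I would then show that the induced functor
\begin{equation*}
    \{\,N\in\grmod A\mid\dim N_{n}\text{ bounded}\,\}\,/\,\{g\text{-torsion}\}
    \;\longrightarrow\;
    \{\text{finitely generated graded }\Lambda\text{-modules with bounded Hilbert function}\}
\end{equation*}
is an equivalence. Essential surjectivity: a finitely generated graded \(\Lambda\)-module \(M\) is generated over \(\Lambda\) by a finite-dimensional graded subspace \(W\), and \(A\cdot W\subseteq M\) is then a finitely generated graded \(A\)-module with \((A\cdot W)_{g}\simeq M\) and \(\dim(A\cdot W)_{n}\leq\dim M_{n}\). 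Full faithfulness: by Gabriel's formula for morphisms in a quotient category together with Noetherianity, the relevant colimit over subobjects with \(g\)-torsion quotient is cofinal over the submodules \(g^{m}N\simeq N(-3m)/K_{N,m}(-3m)\), and on these the morphism space is computed by \(\Hom_{\Lambda}(N_{g},N'_{g})\).

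\emph{Step 2 (descend to \(\Lambda_{0}\)).} Since \(A\) is generated in degree \(1\) we have \(A_{1}A_{2}=A_{2}A_{1}=A_{3}\ni g\), so \(1\in\Lambda_{-2}\Lambda_{2}\) and \(1\in\Lambda_{-1}\Lambda_{1}\), and therefore \(\Lambda_{i}\Lambda_{j}=\Lambda_{i+j}\) for all \(i,j\); that is, \(\Lambda\) is strongly \(\bZ\)-graded. By Dade's theorem, \(M\mapsto M_{0}\) is an equivalence from graded \(\Lambda\)-modules to right \(\Lambda_{0}\)-modules, with quasi-inverse \(V\mapsto\Lambda\otimes_{\Lambda_{0}}V\); here each \(\Lambda_{n}\) is an invertible \(\Lambda_{0}\)-bimodule and \(\Lambda_{3m}\simeq\Lambda_{0}\) as bimodules. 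Restricting to finitely generated graded \(\Lambda\)-modules with bounded Hilbert function, this equivalence should match them with the finite-dimensional \(\Lambda_{0}\)-modules; composing with Step 1 yields the proposition.

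\emph{Main obstacle.} The crux is the finiteness matching at the end of Step 2: that a graded \(\Lambda\)-module with finite-dimensional and uniformly bounded graded pieces corresponds under \(M\mapsto M_{0}\) to a \(\Lambda_{0}\)-module that is finite-dimensional \emph{over \(\bfk\)}, and conversely. One implication is immediate; the other reduces, via the bimodule equivalences \(\Lambda_{n}\otimes_{\Lambda_{0}}-\), to the assertion that every \(\Lambda_{0}\)-module of finite length is finite-dimensional over \(\bfk\). This is a genuine constraint on \(\Lambda_{0}\) and would have to be imported from the structural study of \(\Lambda\) and \(\Lambda_{0}\) in \cite{ATV2} — for instance from \(\Lambda_{0}\) being a finitely generated \(\bfk\)-algebra that is finite over an affine central subalgebra, so that a Nullstellensatz applies to its simple modules. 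A secondary, more routine point is justifying that Gabriel's localisation calculus may be applied within the subcategory of finitely generated modules of bounded Hilbert function (which is not closed under infinite direct sums); Noetherianity of \(A\) makes this harmless.
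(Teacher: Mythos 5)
The paper does not prove this proposition: it is imported verbatim from \cite[Proposition~7.5]{ATV2}, so there is no in-paper argument to compare against. Your two-step reconstruction — central localisation at the regular central element \(g\), followed by Dade's theorem for the strongly graded ring \(\Lambda=A[g^{-1}]\) — is precisely the route taken in \cite{ATV2}, and the steps you write out are sound. The one point you flag as the ``main obstacle'' is in fact not one: since \(\Lambda\) is strongly graded, writing \(1=\sum_i x_iy_i\) with \(x_i\in\Lambda_n\), \(y_i\in\Lambda_{-n}\) gives \(z=\sum_i x_i(y_iz)\) for every \(z\in\Lambda_n\), so each \(\Lambda_n\) is finitely generated as a right \(\Lambda_0\)-module; hence \(\Lambda_n\otimes_{\Lambda_0}V\) is a quotient of some \(V^{\oplus k}\) and is finite-dimensional over \(\bfk\) whenever \(V\) is, with no Nullstellensatz or finite-length argument needed. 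Combined with \(\Lambda_{3m}=g^m\Lambda_0\simeq\Lambda_0\), this yields the uniform bound on \(\dim(\Lambda\otimes_{\Lambda_0}V)_n\) that closes Step 2.
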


\begin{proposition}[{\cite[Corollary~7.9]{ATV2}}]\label{proposition:ATV2 Corollary7.9}
    If the order of the automorphism \( \sigma \) associated to \( A \) is infinite, then
    \( 
        \Lambda_{ 0 } 
    \) 
    has no nontrivial finite dimensional representation.
    In particular, any finitely generated graded \( A \)-module \( N \) whose dimension 
    \( 
        \dim N_{ n } 
    \) 
    is bounded is a \( g \)-torsion module.
\end{proposition}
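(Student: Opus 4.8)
The statement splits into two parts: the assertion that $\Lambda_{0}$ has no nontrivial finite-dimensional representation, and the ``in particular'' clause. The latter is a formal consequence of the former together with the preceding \cite[Proposition~7.5]{ATV2}. Indeed, let $N$ be a finitely generated graded $A$-module with $\dim N_{n}$ bounded. Since $A$ is noetherian, the $g$-torsion submodule $K_{N,\infty}\subseteq N$ is finitely generated, hence equals $K_{N,k}$ for some $k$ and is $g$-torsion, while $N/K_{N,\infty}$ is $g$-torsion-free and still has bounded graded dimension. By \cite[Proposition~7.5]{ATV2} the latter corresponds to a finite-dimensional $\Lambda_{0}$-module, which is zero by the first part, so $N/K_{N,\infty}=0$ and $N$ is $g$-torsion. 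It therefore suffices to treat the first part, which I would import from \cite{ATV2}; let me indicate the shape of the argument.

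One works with $\Lambda=A[g^{-1}]$. Because $A$ is generated in degree one, $\Lambda$ is a \emph{strongly} $\bZ$-graded ring (one has $\Lambda_{1}\Lambda_{-1}\supseteq\Lambda_{1}\Lambda_{2}g^{-1}=\Lambda_{3}g^{-1}\ni 1$), so that $M\mapsto M_{0}$ identifies graded $\Lambda$-modules with $\Lambda_{0}$-modules and the claim becomes that $\Lambda_{0}$ has no nonzero finite-dimensional representation. Since $A$ is a noetherian domain it embeds in its graded division ring of fractions, whose structure one controls via the surjection $A\to B=B(E,\sigma,\cL)$ and the explicit description of the twisted homogeneous coordinate ring $B$ (its graded ring of fractions being the skew Laurent ring $\bfk(E)[u^{\pm 1};\sigma]$ over the function field of $E$), and one shows that the infinite order of $\sigma$ forces $\Lambda_{0}$ to be a simple ring, infinite-dimensional over $\bfk$. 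Granting this, any finite-dimensional representation $\Lambda_{0}\to\operatorname{End}_{\bfk}(V)$ has kernel a two-sided ideal, hence either $0$ (impossible, as $\dim_{\bfk}\Lambda_{0}=\infty$) or all of $\Lambda_{0}$ (forcing $V=0$). When $\sigma$ has finite order, by contrast, the centre of $A$ is large, $\Lambda_{0}$ has two-sided ideals of finite codimension, and finite-dimensional representations are plentiful (the ``fat points away from $E$''), so the hypothesis is genuinely used here.

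I expect the simplicity of $\Lambda_{0}$ to be the main obstacle; it does not seem reachable by the soft methods of the present paper, which is why the result is imported from \cite{ATV2}. It is nonetheless worth recording the reformulation that \emph{is} within reach. The first part is equivalent to the statement that there is no nonzero $g$-torsion-free finitely generated graded $A$-module $N$ of GK-dimension one. Given one, its Hilbert series has the form $b/(1-t)+f(t)$ with $b>0$ by \eqref{explicit description of GK dimension}, so $N/gN$ is finite-dimensional; since moreover $N$ has no $g$-torsion, \cref{lemma:derived pullback of graded modules} gives $\bL j^{\ast}\pi(N)=0$, so that $\pi(N)$ is a nonzero, finite-length object of $\qgr(A)$ ``supported away from $E$''. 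Now if one knew that the only simple objects of $\qgr(A)$ of GK-dimension one are the $j_{\ast}\cO_{p}$ with $p\in E$ --- which is what the first part amounts to geometrically, the absence of ``fat point'' modules --- then every simple subquotient of $\pi(N)$ would be some $j_{\ast}\cO_{p}$, on which $g$ acts by zero (being in the image of $j_{\ast}$), so $g$ would act nilpotently on $\pi(N)$ and $g^{k}$ would annihilate it for $k$ its length; but $g$ is injective on $N$, so then $g^{k}N\cong N(-3k)$ would be finite-dimensional, forcing $N$ itself to be finite-dimensional and contradicting $\GKdim N=1$. So everything reduces to the absence of fat points, equivalently to the simplicity of $\Lambda_{0}$ --- the one genuinely non-formal input, and the reason the method does not extend to arbitrary noncommutative projective planes.
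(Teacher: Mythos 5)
The paper gives no proof of this proposition at all --- it is imported verbatim as \cite[Corollary~7.9]{ATV2} --- and your proposal does essentially the same thing for the only non-formal ingredient (the simplicity of \(\Lambda_{0}\) when \(\sigma\) has infinite order), so there is nothing to compare against. Your deduction of the ``in particular'' clause from \cite[Proposition~7.5]{ATV2} is correct (indeed it is immediate: a bounded module corresponds to a finite-dimensional \(\Lambda_{0}\)-module, which must vanish, so the module is zero modulo \(g\)-torsion), and your sketch of the ATV2 argument and the reformulation via fat points are accurate.
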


According to the two results above, any graded \( A \)-module \( M \) with 
\(
    \GKdim M \leq 1
\) 
is a \( g \)-torsion module if the order of \(\sigma\) is infinite. 

%
%
\subsection{Spherical functor}\label{sec:spherical functor}
\begin{comment}
\cite[{Example~3.4}]{PIROZHKOV2023109046}
\end{comment}
In this section we assume that any triangulated category and any exact functor have dg enhancements.
Hence, there exist functorial cones. 
Let 
\(
    F
    \colon 
    \cT_{ 1 }
    \to 
    \cT_{ 2 }
\) 
be an exact functor admitting right and left adjoint functor 
\(
    R, L
    \colon 
    \cT_{ 2 }
    \to 
    \cT_{ 1 }
\).
Consider the canonical triangles 
\begin{equation*}
    FR
    \overset{\epsilon}{\to}
    \id
    \to
    T
    \to
    FR[ 1 ]
\end{equation*}
and 
\begin{equation}\label{eq: exact triangle associaed to adjunction}
    \id
    \overset{\eta}{\to}
    RF
    \to
    C
    \to
    \id[1]
\end{equation}
where 
\(
    \eta
    \colon 
    \id_{ \cT_{ 2 } } 
    \to 
    RF
\) 
is the unit, and 
\(
    \epsilon
    \colon 
    FR
    \to 
    \id_{ \cT_{ 1 } }
\) 
is the counit of adjunction.
The functor \(T\) is called \emph{twist} and \(C\) is a \emph{cotwist} of \(F\).

\begin{definition}\label{def: spherical functor}
    An exact functor 
    \(
        F
        \colon 
        \cT_1
        \to 
        \cT_2
    \) 
    which admits a left adjoint \(L\) and right adjoint \(R\)
    is called \emph{spherical} if the cotwist \( C \) is an equivalence and \(R\simeq CL\).
\end{definition}

\begin{theorem}[{\cite[{Theorem~2.3}]{Add16}}]
    For a spherical functor \(F\), the twist \(T\) is an equivalence.
\end{theorem}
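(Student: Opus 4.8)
The plan is to construct an explicit quasi-inverse of the twist \(T\). Write \(\eta'\colon\id_{\cT_{2}}\to FL\) and \(\epsilon'\colon LF\to\id_{\cT_{1}}\) for the unit and counit of the adjunction between \(L\) and \(F\), and let \(T'\) (the \emph{dual twist}) be the endofunctor of \(\cT_{2}\) determined by a triangle
\begin{equation*}
    T'\to\id_{\cT_{2}}\xrightarrow{\ \eta'\ }FL\to T'[1];
\end{equation*}
this is legitimate because every category and functor here carries a dg enhancement, so cones are functorial. I would then prove that \(T\circ T'\simeq\id_{\cT_{2}}\) and \(T'\circ T\simeq\id_{\cT_{2}}\), so that \(T\) is an equivalence.

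The first step is to describe \(T\) and \(T'\) on the essential image of \(F\). Composing the triangle \(FR\xrightarrow{\epsilon}\id\to T\to FR[1]\) with \(F\) on the right and invoking the triangle identity \(\epsilon F\circ F\eta=\id_{F}\), the morphism \(F\eta\colon F\to FRF\) is a split monomorphism with retraction \(\epsilon F\); hence \(FRF\simeq F\oplus FC\) with \(\epsilon F\) the projection, so \(TF=\operatorname{Cone}(\epsilon F)\simeq FC[1]\). Dually, the triangle defining \(T'\) and the triangle identity \(F\epsilon'\circ\eta'F=\id_{F}\) give \(FLF\simeq F\oplus T'F[1]\), and here the sphericality hypotheses — that \(C\) is an equivalence and \(R\simeq CL\), whence \(L\simeq C^{-1}R\) — let one identify the complementary summand \(T'F[1]\) with \(FC^{-1}\), so \(T'F\simeq FC^{-1}[-1]\).

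With these formulas in hand I would compute the two composites. Applying \(T\) to the triangle defining \(T'\) yields a triangle \(TT'\to T\to TFL\to TT'[1]\), and by the previous step together with \(R\simeq CL\) we get \(TFL\simeq FCL[1]\simeq FR[1]\); hence \(TT'[1]\simeq\operatorname{Cone}\bigl(T\to FR[1]\bigr)\). On the other hand, rotating the triangle defining \(T\) gives a triangle \(\id\to T\xrightarrow{\ w\ }FR[1]\to\id[1]\), so \(\operatorname{Cone}(w)\simeq\id[1]\). Therefore \(TT'\simeq\id\), provided the morphism \(T\to FR[1]\) induced by \(T\eta'\) coincides, under the isomorphism \(TFL\simeq FR[1]\) and up to sign, with the connecting morphism \(w\). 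The identity \(T'\circ T\simeq\id\) is obtained symmetrically: apply \(T'\) to the triangle defining \(T\), use \(T'FR\simeq FC^{-1}R[-1]\simeq FL[-1]\), and compare with the triangle defining \(T'\).

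The real obstacle is precisely this matching of connecting morphisms. Having the isomorphisms \(TF\simeq FC[1]\), \(T'F\simeq FC^{-1}[-1]\), \(TFL\simeq FR[1]\) and \(T'FR\simeq FL[-1]\) merely as abstract isomorphisms is not enough to conclude that the composites are the identity functor; one also needs the units and counits \(\eta,\epsilon,\eta',\epsilon'\) together with the sphericality isomorphism \(R\simeq CL\) to be mutually compatible in the appropriate coherent sense, so that they can be spliced into the triangles defining \(T\) and \(T'\) without ambiguity. Keeping track of this coherence — typically by stringing together a chain of octahedra — and making sure it survives at the dg level is what makes the argument delicate, and is the content of the cited proof.
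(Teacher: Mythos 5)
The paper itself offers no proof of this statement: it is quoted verbatim from [Add16], so there is nothing internal to compare against. Measured against the proof in the cited literature (Addington, following Anno--Logvinenko), your outline is the standard one and your intermediate formulas are right: \(F\eta\) is split by \(\epsilon F\), whence \(FRF\simeq F\oplus FC\) and \(TF\simeq FC[1]\); dually \(FLF\simeq F\oplus T'F[1]\); and the two composites \(TT'\), \(T'T\) are computed by whiskering one defining triangle with the other twist and comparing cones.

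The gap you flag at the end, however, is not a residual technicality — it is the theorem. Two separate steps silently require that the isomorphism \(R\simeq CL\) be the \emph{canonical} natural transformation assembled from \(\eta,\epsilon,\eta',\epsilon'\) (namely the composite \(R\to RFL\to CL\) induced by the unit of \(L\dashv F\) and the triangle defining \(C\)), not an arbitrary abstract isomorphism: first, the identification of the complementary summand of \(F\) in \(FLF\) with \(FC^{-1}\) (so that \(T'F\simeq FC^{-1}[-1]\)), and second, the matching of the induced map \(T\to TFL\simeq FR[1]\) with the connecting morphism \(w\) of the rotated twist triangle. With a non-canonical isomorphism \(R\simeq CL\) the connecting morphisms need not agree and \(\operatorname{Cone}(T\to FR[1])\) need not be \(\id[1]\); indeed \cref{def: spherical functor} as stated in the paper elides this point, and the precise formulation plus the chain of octahedra (carried out at the dg level so that the cones are functorial) constitute essentially all of the proof in [Add16] and in Anno--Logvinenko. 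So your proposal correctly reduces the statement to that compatibility but does not establish it; as a proof it is incomplete at exactly the load-bearing step.
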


\begin{example}\label{example: restriction functor is a spherical functor}
    Let \(A\) be a three-dimensional quadratic AS-regular algebra and let \((E, \sigma, \cL)\) be the triple associated to \(A\).
    Assume that \(E\) is a smooth elliptic curve and \(\sigma\) is a translation.
    The restriction functor
    \(
        \bL j^{\ast} 
        \colon
        \boundedderived\qgr(A)
        \to
        \dbcoh(E)
    \)
    is spherical. 
    Indeed, the right adjoint functor to 
    \(
        \bL j^{\ast}
    \) 
    is the functor 
    \(
        j_{\ast}
    \), 
    and the left adjoint functor 
    \(
        j_{!}
    \) 
    is defined by 
    \begin{equation*}
        j_{!}(-)
        =
        S_{A}^{-1}j_{\ast} S_{E}
        \overset{\text{\cref{theorem:Serre functor of qgr(A)}}}{=}
        \left(
            j_{ \ast }( ( - ) \otimes_{E} \omega_{E}) 
        \right) ( 3 ) [-1] 
        =
        \left(
            j_{ \ast }( - ) 
        \right) ( 3 ) [-1]
    \end{equation*}
    where \( S_{A} \), \( S_{E} \) are the Serre functors of 
    \(
        \boundedderived\qgr(A)
    \), 
    \(
        \dbcoh(E)
    \) 
    respectively.
    Considering the canonical exact triangle
    \begin{equation*}
        M( -3 )
        \xrightarrow{\cdot g}
        M
        \to
        j_{ \ast } \bL j^{ \ast } M
        \to 
        M( -3 )[ 1 ],
    \end{equation*}
    the spherical cotwist functor
    \(
        C
        \colon
        M
        \mapsto 
        M(-3)[1]
    \)
    is an autoequivalence. 
    It is easy to see that 
    \(
        Cj_{!}( M ) 
        \simeq
        j_{ \ast } (M) 
    \).
    Therefore, 
    \(
        \bL j^{ \ast }
    \)
    is a spherical functor.
\end{example}

\begin{theorem}[{\cite[{Proposition~2.1}]{Add16}}]
    Let 
    \(
        F
        \colon
        \cT_1
        \to
        \cT_2
    \)
    be a spherical functor such that the spherical cotwist is isomorphic to the Serre functor of \(\cT_1\) up to a shift.
    If 
    \(
        \cA\subset \cT_1
    \) 
    is an admissible subcategory, then the composition 
    \(
        \cA
        \hookrightarrow
        \cT_1 
        \to 
        \cT_2 
    \) 
    is also a spherical functor.
\end{theorem}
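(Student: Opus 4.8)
The plan is to present the restriction as the composite $F\circ\iota$, where $\iota\colon\cA\hookrightarrow\cT_{1}$ is the inclusion, with left adjoint $\iota^{\ast}$ and right adjoint $\iota^{!}$ (these exist since $\cA$ is admissible), and $\iota^{\ast}\iota\simeq\id_{\cA}\simeq\iota^{!}\iota$ because $\iota$ is fully faithful. Composing adjunctions, $F\iota$ admits the left adjoint $\iota^{\ast}L$ and the right adjoint $\iota^{!}R$. By \cref{def: spherical functor} it then suffices to check that the cotwist $C_{F\iota}$ of $F\iota$ is an equivalence and that $\iota^{!}R\simeq C_{F\iota}\circ\iota^{\ast}L$ (in which case the twist of $F\iota$ is automatically an equivalence by \cite[{Theorem~2.3}]{Add16}).

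First I would compute the cotwist. Applying $\iota^{!}(-)\iota$ to the cotwist triangle $\id_{\cT_{1}}\xrightarrow{\eta}RF\to C\to\id_{\cT_{1}}[1]$ of $F$ --- legitimate since $\iota^{!}$ and $\iota$ are triangulated and cones are functorial in the dg setting --- produces a triangle
\[
    \iota^{!}\iota\to\iota^{!}RF\iota\to\iota^{!}C\iota\to(\iota^{!}\iota)[1].
\]
A routine check identifies its first arrow, via the isomorphism $\iota^{!}\iota\simeq\id_{\cA}$, with the unit of the adjunction $(F\iota,\iota^{!}R)$, so that $C_{F\iota}\simeq\iota^{!}C\iota$. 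Here the hypothesis enters: write $C\simeq S_{\cT_{1}}[n]$, and use that the admissible subcategory $\cA$ inherits the Serre functor $S_{\cA}\simeq\iota^{!}S_{\cT_{1}}\iota$; then $C_{F\iota}\simeq S_{\cA}[n]$, which is an equivalence.

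Next I would verify $\iota^{!}R\simeq C_{F\iota}\circ\iota^{\ast}L$. The extra ingredient is the ``base change'' isomorphism $\iota^{!}S_{\cT_{1}}\simeq S_{\cA}\iota^{\ast}$ of functors $\cT_{1}\to\cA$, which comes from iterated Serre duality: for $a\in\cA$ and $x\in\cT_{1}$,
\[
    \Hom_{\cA}(a,\iota^{!}S_{\cT_{1}}x)\simeq\Hom_{\cT_{1}}(\iota a,S_{\cT_{1}}x)\simeq\Hom_{\cT_{1}}(x,\iota a)^{\vee}\simeq\Hom_{\cA}(\iota^{\ast}x,a)^{\vee}\simeq\Hom_{\cA}(a,S_{\cA}\iota^{\ast}x),
\]
functorially, whence the claim by the Yoneda lemma. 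Combining the sphericality relation $R\simeq CL$ for $F$ with $C\simeq S_{\cT_{1}}[n]$, this isomorphism, and $\iota^{\ast}\iota\simeq\id_{\cA}$, one gets
\[
    \iota^{!}R\simeq\iota^{!}S_{\cT_{1}}[n]L\simeq S_{\cA}[n]\iota^{\ast}L\simeq C_{F\iota}\circ\iota^{\ast}L,
\]
which is the desired relation. Hence $F\iota$ is spherical.

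The formal part --- producing the adjoints of $F\iota$ and the triangle defining $C_{F\iota}$ --- is routine bookkeeping with adjunctions and functorial cones. The step I expect to need the most care is the interaction of Serre duality with the admissibility adjunctions: that $\cA$ inherits a Serre functor via $\iota^{!}S_{\cT_{1}}\iota$, and the base-change isomorphism $\iota^{!}S_{\cT_{1}}\simeq S_{\cA}\iota^{\ast}$. It is precisely the Serre-functor hypothesis on the cotwist of $F$ that keeps $C_{F\iota}$ an equivalence and lets the relation $R\simeq CL$ descend to $\cA$; for an arbitrary equivalence $C$ one would have no control over $\iota^{!}C\iota$. As a byproduct the argument shows $C_{F\iota}\simeq S_{\cA}[n]$, so $F\iota$ again satisfies the hypothesis of the statement and the result may be applied iteratively --- which is what lets one restrict $\bL j^{\ast}$ to admissible, in particular phantom, subcategories of $\boundedderived\qgr(A)$.
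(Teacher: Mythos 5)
Your argument is correct, and the paper itself gives no proof of this theorem, deferring entirely to \cite[{Proposition~2.1}]{Add16}; your reconstruction --- identifying the cotwist of \(F\iota\) with \(\iota^{!}C\iota\simeq S_{\cA}[n]\) via functorial cones and the fully faithful unit \(\id_{\cA}\xrightarrow{\sim}\iota^{!}\iota\), and using the base-change isomorphism \(\iota^{!}S_{\cT_{1}}\simeq S_{\cA}\iota^{\ast}\) to transport the relation \(R\simeq CL\) --- is precisely the standard argument of the cited source and verifies both conditions of the paper's Definition of a spherical functor.
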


By \cref{example: restriction functor is a spherical functor}, we can apply this theorem to the restriction functor \(\bL j^{\ast}\).

\begin{corollary}\label{corollary: an exact triangle associated to the derived pullback which is a spherical functor}
    Let 
    \( 
        \cB 
        \subset 
        \boundedderived \qgr(A) 
    \) 
    be an admissible subcategory and 
    \(
        \pr^{ R }_{ \cB }
    \) 
    be the right projection onto \( \cB \).
    There exists an exact triangle in \( \dbcoh( E \times E ) \)
    \begin{equation}\label{eq:exact triangle of kernels}
        K_{ LR }
        \to
        \cO_{ \Delta }
        \to
        K_{ T }
        \to
        K_{ LR }[ 1 ]
    \end{equation}
    such that 
    \(
        \Phi_{ K_{ LR } }
        \simeq 
        \bL j^{ \ast } \pr^{ R }_{ \cB } j_{ \ast }
    \)
    and 
    \(
        \Phi_{ K_{ T } }
        \simeq 
        T
    \)
    where \( T \) is a spherical twist functor associated to the spherical functor 
    \(
        \cB
        \hookrightarrow 
        \boundedderived \qgr( A )
        \xrightarrow{\bL j^{ \ast }}
        \dbcoh(E)
    \).
    Thus for any \( F \in \dbcoh( E ) \), there exists an exact triangle
    \begin{equation}\label{eq:exact triangle associated to spherical functor}
        \bL j^{ \ast } \pr^{ R }_{ \cB } (j_{ \ast } F)
        \to
        F
        \to
        T( F )
        \to
        \bL j^{ \ast } \pr^{ R }_{ \cB } ( j_{ \ast } F )[ 1 ]
    \end{equation}
    in \( \dbcoh( E ) \).
\end{corollary}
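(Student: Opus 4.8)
The plan is to identify the relevant spherical functor together with its adjoints, extract the abstract twist triangle of endofunctors of \(\dbcoh(E)\), and finally lift that triangle to a triangle of Fourier--Mukai kernels on \(E\times E\).

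First I would set \(G\coloneqq \bL j^{\ast}\circ i_{\cB}\), the composition \(\cB\hookrightarrow\boundedderived\qgr(A)\xrightarrow{\bL j^{\ast}}\dbcoh(E)\). By \cref{example: restriction functor is a spherical functor} the functor \(\bL j^{\ast}\) is spherical with cotwist \(C\colon M\mapsto M(-3)[1]\), which by \cref{theorem:Serre functor of qgr(A)} equals the Serre functor \(S_{A}\) up to the shift \([-1]\); since \(\cB\) is admissible, the quoted \cite[Proposition~2.1]{Add16} applies and \(G\) is spherical. Its right adjoint is \(R\coloneqq\pr^{R}_{\cB}\circ j_{\ast}\) and its left adjoint is \(L\coloneqq\pr^{L}_{\cB}\circ j_{!}\) (the adjoint of a composite), with \(j_{!}\) as in \cref{example: restriction functor is a spherical functor}. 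The twist \(T\) of \(G\) then sits in the canonical triangle \(GR\xrightarrow{\epsilon}\id_{\dbcoh(E)}\to T\to GR[1]\), and \(GR=\bL j^{\ast}\,\pr^{R}_{\cB}\,j_{\ast}\) (absorbing \(i_{\cB}\) into \(\pr^{R}_{\cB}\)); evaluating this triangle at an object \(F\in\dbcoh(E)\) produces the exact triangle \eqref{eq:exact triangle associated to spherical functor}, so that half of the statement is formal once the adjoints are pinned down.

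It remains to realize the three functors as integral functors and the triangle of functors as a triangle of kernels. Since \(E\) is a smooth projective curve, \(\dbcoh(E)=\operatorname{Perf}(E)\) is a smooth proper dg category, and by the dg-enhanced form of Orlov's representability theorem (To\"en, Lunts--Orlov) the assignment \(P\mapsto\Phi_{P}\) is an equivalence from \(\dbcoh(E\times E)\) onto the category of exact endofunctors of \(\dbcoh(E)\) admitting a dg-enhancement, compatibly with morphisms; by the standing assumption of \cref{sec:spherical functor} the functors \(\bL j^{\ast}\), \(j_{\ast}\), \(\pr^{R}_{\cB}\) and their composite all carry dg-enhancements, so \(\bL j^{\ast}\,\pr^{R}_{\cB}\,j_{\ast}\), \(\id_{\dbcoh(E)}\) and \(T\) are integral functors, with kernels I shall call \(K_{LR}\), \(\cO_{\Delta}\) and \(K_{T}\). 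The counit \(\epsilon\colon\Phi_{K_{LR}}\to\Phi_{\cO_{\Delta}}\) then lifts to a morphism \(\widetilde{\epsilon}\colon K_{LR}\to\cO_{\Delta}\) in \(\dbcoh(E\times E)\); completing \(\widetilde{\epsilon}\) to an exact triangle \(K_{LR}\to\cO_{\Delta}\to K'\to K_{LR}[1]\) and applying \(\Phi_{(-)}\) yields a triangle of endofunctors sharing the first two terms and the map \(\epsilon\) with the twist triangle, so \(\Phi_{K'}\simeq T\) by uniqueness of cones together with full faithfulness of \(P\mapsto\Phi_{P}\); setting \(K_{T}\coloneqq K'\) gives \eqref{eq:exact triangle of kernels}.

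The main obstacle is this last step: one must ensure that the dg-enhancements of the various functors are mutually compatible, so that \(\epsilon\) genuinely arises from a morphism of kernels and the cone formed in \(\dbcoh(E\times E)\) computes the cone in the functor category. This really needs the enhanced version of Orlov's theorem rather than the classical statement about fully faithful functors, since \(\bL j^{\ast}\,\pr^{R}_{\cB}\,j_{\ast}\) is in general neither an equivalence nor fully faithful. Everything else is routine bookkeeping with adjunctions and with the definition of the spherical twist.
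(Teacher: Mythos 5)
Your first paragraph (identifying the spherical functor $\cB\hookrightarrow\boundedderived\qgr(A)\xrightarrow{\bL j^{\ast}}\dbcoh(E)$ via \cref{example: restriction functor is a spherical functor} and \cite[Proposition~2.1]{Add16}, reading off $R=\pr^{R}_{\cB}\,j_{\ast}$, and evaluating the twist triangle at $F$) matches the paper's setup, and your second half correctly identifies To\"en's representability theorem as the tool for producing the kernel triangle. However, the point you yourself flag as ``the main obstacle'' is exactly where your argument has a genuine gap, and it is exactly the content of the paper's appendix. You dispose of it by appealing to the blanket assumption of \cref{sec:spherical functor} that ``all functors carry dg-enhancements,'' but that assumption is a convenience for stating the definitions; the corollary is proved in \cref{section: proof of corollary} precisely because one must \emph{construct} a dg enhancement of $\pr^{R}_{\cB}$ that is compatible with the dg adjunction $\bL j^{\ast}\dashv j_{\ast}$, so that the counit $\epsilon\colon LR\to\id$ exists as a morphism of \emph{dg} endofunctors before To\"en's theorem is applied. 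Your formulation --- lift the triangulated natural transformation $\epsilon$ to a morphism of kernels via an ``equivalence onto the category of exact endofunctors admitting a dg-enhancement, compatibly with morphisms'' --- is not what To\"en's theorem says: the comparison map from morphisms of kernels to morphisms of the induced triangulated functors is in general neither injective nor surjective, so a triangulated-level $\epsilon$ need not lift, and its cone in the functor category is not determined.

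The paper closes this gap concretely: by \cref{theorem: full strong exceptional collection on fake p2} there is a tilting object $G$, giving $\boundedderived\qgr(A)\simeq\boundedderived\module\Lambda$ with $\Lambda=\End(G)$ a finite-dimensional algebra of finite global dimension; \cref{lemma: semi-orthogonal decomposition of derived category of a enveloping algebra} decomposes the diagonal bimodule $\Lambda\in\boundedderived\module\Lambda^{e}$ according to the semi-orthogonal decomposition, so that the projections onto $\cA$ and $\cB$ are realized as $(-)\Lotimes_{\Lambda}P_{\cA}$ and $(-)\Lotimes_{\Lambda}P_{\cB}$ and hence carry dg enhancements (\cref{lemma: admissible subcategory of finite dimensional algebra has dg enhancement}); composing the resulting dg adjunctions yields a dg adjunction $L\dashv R$ whose dg counit one cones off, and only then is \cite[Theorem~8.9]{Toen2007} invoked to transport the triangle of dg endofunctors to the triangle of kernels \eqref{eq:exact triangle of kernels}. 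To make your proof complete you would need to supply this construction (or an equivalent one) rather than cite the standing assumption.
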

We defer the proof of 
\cref{corollary: an exact triangle associated to the derived pullback which is a spherical functor} 
to 
\cref{section: proof of corollary}.

\section{ Proof of Maim theorem}\label{sec:proof of main theorem}
\subsection{A spanning class of noncommutative projective planes}\label{subsec:spanning class of noncommutative projective plane}
In this section, let \(A\) be a three-dimensional AS-regular algebra and let \((E, \sigma, \cL)\) be a geometric triple associated to \(A\). Assume that \(E\) is a smooth elliptic curve.
\begin{lemma}\label{lemma:inequality of gkdimension}
    Let 
    \(
        M \in \grmod A
    \) 
    and set
    \(
        Q
        \coloneqq 
        M/Mg
        =
        \coker(
            M( -3 )
            \overset{ \cdot g }{ \to } 
            M
        )
    \).
    Then the following inequality holds.
    \begin{equation}
        \GKdim Q \geq \GKdim M -1
    \end{equation}
\end{lemma}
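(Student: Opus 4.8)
The plan is to prove this by a direct comparison of Hilbert functions, bypassing the torsion submodule \(K_{M,1}\). The starting observation is a crude pointwise estimate: for every \(d\in\bZ\) the \(\bfk\)-linear map \(\cdot g\colon M_{d-3}\to M_d\) has image \((Mg)_d\), so \(\dim_{\bfk}(Mg)_d\le\dim_{\bfk}M_{d-3}\), and hence
\[
    \dim_{\bfk}Q_d \;=\; \dim_{\bfk}M_d-\dim_{\bfk}(Mg)_d \;\ge\; \dim_{\bfk}M_d-\dim_{\bfk}M_{d-3}.
\]
(Equivalently, one could split the exact sequence \(0\to K_{M,1}(-3)\to M(-3)\xrightarrow{\cdot g}M\to Q\to 0\) and compare Hilbert series, but this pointwise bound already suffices.)

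I would then distinguish two cases. If \(\GKdim M\le 1\), it is enough to show \(Q\neq 0\), since then \(\GKdim Q\ge 0\ge\GKdim M-1\). This is immediate: as a finitely generated module over the positively graded algebra \(A\), \(M\) is bounded below, so letting \(d_0\) be the smallest degree with \(M_{d_0}\neq 0\) we get \(M_{d_0-3}=0\), and the estimate above forces \(\dim_{\bfk}Q_{d_0}\ge\dim_{\bfk}M_{d_0}>0\).

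If \(\GKdim M\ge 2\), I would invoke \cref{lemma:polynomial}: for \(d\gg 0\) one has \(\dim_{\bfk}M_d=P_M(d)\), where \(P_M\) has degree \(\GKdim M-1\ge 1\) and (necessarily positive) leading coefficient \(c\). Then \(P_M(x)-P_M(x-3)\) has degree \(\GKdim M-2\) and leading coefficient \(3(\GKdim M-1)c>0\), while the pointwise estimate gives \(P_Q(d)\ge P_M(d)-P_M(d-3)\) for \(d\gg 0\), with \(P_Q\) the Hilbert polynomial of \(Q\) furnished by \cref{lemma:polynomial}. Thus \(P_Q(x)-\bigl(P_M(x)-P_M(x-3)\bigr)\) is nonnegative for all large \(d\); were \(\deg P_Q<\GKdim M-2\), this polynomial would have leading term \(-3(\GKdim M-1)c\,x^{\GKdim M-2}\) and tend to \(-\infty\), a contradiction. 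Hence \(\deg P_Q\ge\GKdim M-2\), i.e. \(\GKdim Q=\deg P_Q+1\ge\GKdim M-1\).

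The whole argument is elementary, so I do not expect a genuine obstacle; the only thing to be careful about is that no cancellation can drop the degree of \(P_Q\), which is ensured by the positivity of \(c\) together with the nonnegativity of the Hilbert function of \(Q\) — and, of course, keeping the grading-shift and degree conventions (\(\GKdim=\deg P+1\)) straight.
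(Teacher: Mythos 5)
Your proof is correct and follows essentially the same route as the paper's: both reduce to the pointwise inequality \(\dim Q_d \ge \dim M_d - \dim M_{d-3}\) (the paper phrases it via the Hilbert-series identity \(h_Q(t)=(1-t^3)h_M(t)+h_K(t)\)) and then compare leading coefficients of \(P_Q\) and \(P_M(x)-P_M(x-3)\) using \(\GKdim = \deg P + 1\). Your explicit case split for \(\GKdim M \le 1\) is in fact slightly more careful than the paper, whose blanket claim that the leading coefficients of \(P_Q\) and \(\Delta_M\) are positive silently ignores the degenerate cases where these polynomials vanish.
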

\begin{proof}
    The exact sequence
    \begin{equation*}
        0
        \to
        K
        \to
        M( -3 )
        \xrightarrow{\cdot g}
        M
        \to 
        Q
        \to 
        0
    \end{equation*}
    implies
    \begin{equation}\label{eq: Hilbert series relation}
        h_{ Q } ( t ) = ( 1-t^{ 3 } )h_{ M } ( t ) + h_{ K } ( t ).
    \end{equation}
    By~\cref{lemma:polynomial} there are polynomials \( P_{ M } ( x ) \) and \( P_{ Q }( x ) \) such that
    \begin{equation*}
        \dim M_{ d } = P_{ M }( d ) \text{ and } \dim Q_{ d } = P_{ Q }( d )
    \end{equation*}
    for sufficiently large
    \(
       d 
    \).
    Moreover, we have 
    \begin{equation}\label{eq:gkdim equals degree of poincare polynomial}
        \GKdim M = \deg P_{ M }( x ) + 1
    \text{ and }
        \GKdim Q = \deg P_{ Q }( x ) + 1.
    \end{equation}
    By \eqref{eq: Hilbert series relation}, we have an inequality
    \begin{equation}\label{eq: an inequality of the value of Poincere polynomials}
        P_{ Q }( d )
        \geq 
        P_{ M }( d ) - P_{ M }( d-3 )
    \end{equation}
    for sufficiently large \( d \). Set 
    \( 
        \Delta_{ M } ( x )
        \coloneqq 
        P_{ M } ( x ) - P_{ M }( x - 3 )
    \).
    The leading coefficients of \( P_{ Q }( x ) \) and \( \Delta_{ M }( x ) \) are positive.
    Indeed, since 
    \( 
        P_{Q} ( d ) 
        = 
        \dim Q_{ d } 
        \geq 
        0
    \) 
    for sufficiently large \( d \), 
    the leading coefficient of 
    \( 
        P_{ Q } ( x )
    \) 
    is positive.
    Let \( \alpha \) be the leading coefficient of \( P_{ Q }( x )\).
    Since
    \begin{equation*}
        x^{ n }
        -
        ( x -3 )^{ n }
        =
        3 x^{ n-1 }
        +
        \text{ (lower degree terms)},
    \end{equation*}
    the leading coefficient of \( \Delta_{ M } ( x ) \) is equal to \( 3\alpha > 0 \).
    
    Since the leading coefficients of 
    \( 
        P_{ Q }( x ) 
    \) 
    and 
    \( 
        \Delta_{ M }( x ) 
    \) 
    are positive, 
    it follows from 
    \eqref{eq: an inequality of the value of Poincere polynomials} 
    that
    \begin{equation*}
        \deg P_{ Q } ( x )
        \geq
        \deg \Delta_{ M }( x )
        =
        \deg P_{ M }( x )
        -
        1.
    \end{equation*}
    Therefore,
    by \eqref{eq:gkdim equals degree of poincare polynomial},
    we have 
    \(
        \GKdim Q 
        \geq 
        \GKdim M - 1
    \).
\end{proof}

\begin{lemma}\label{lemma:support has non trivial intersection with elliptic curve}
    Suppose that the order of the automorphism
    \(
       \sigma
    \)
    is infinite. Let 
    \(
        M
    \)
    be a bounded complex in \( \qgr( A ) \).
    If 
    \(
        \bL j^{\ast} M = 0 
    \), 
    then 
    \(
        M = 0
    \).
\end{lemma}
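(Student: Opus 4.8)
The plan is to reduce the statement, via the standard $t$-structure on $\boundedderived\qgr(A)$, to the analogous claim for a single object of the heart $\qgr(A)$, and then to invoke the structure theory of \cref{proposition:ATV2 Corollary7.9} through the Gelfand--Kirillov dimension estimate of \cref{lemma:inequality of gkdimension}.

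First I would use the exact triangle $M(-3)\xrightarrow{\cdot g}M\to j_\ast\bL j^\ast M\to M(-3)[1]$ recalled before \cref{lemma:derived pullback of graded modules}. If $\bL j^\ast M=0$ then $j_\ast\bL j^\ast M=0$, so $\cdot g\colon M(-3)\to M$ is an isomorphism in $\boundedderived\qgr(A)$. Passing to cohomology objects for the standard $t$-structure, and using that the grading shift $(-3)$ is exact so that $H^i(M(-3))\simeq H^i(M)(-3)$, we find that $\cdot g\colon H^i(M)(-3)\to H^i(M)$ is an isomorphism in $\qgr(A)$ for every $i$. Hence it suffices to prove: if $N\in\qgr(A)$ and $\cdot g\colon N(-3)\to N$ is an isomorphism, then $N=0$.

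Next I would lift $N$ to a finitely generated graded module $\widetilde N\in\grmod(A)$ with $\pi(\widetilde N)=N$. The natural morphism $\cdot g\colon\widetilde N(-3)\to\widetilde N$ satisfies $\pi(\cdot g)=(\cdot g\colon N(-3)\to N)$, which is an isomorphism; since $\pi$ is an exact Serre quotient, both $K_{\widetilde N,1}=\ker(\cdot g)$ and $Q=\widetilde N/\widetilde N g=\coker(\cdot g)$ lie in $\tors(A)$, i.e.\ are finite-dimensional. If $\widetilde N\neq 0$, graded Nakayama gives $Q\neq 0$, so $\GKdim Q=0$ by the quadrichotomy \eqref{explicit description of GK dimension}; then \cref{lemma:inequality of gkdimension} yields $\GKdim\widetilde N\le \GKdim Q+1=1$. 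Since $\sigma$ has infinite order, the consequence of \cref{proposition:ATV2 Corollary7.9} noted right after it shows $\widetilde N$ is a $g$-torsion module, i.e.\ $\widetilde N=K_{\widetilde N,\infty}$; as $\widetilde N$ is finitely generated and $g$ is central, this forces $g^n\widetilde N=0$ for some $n>0$.

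Finally, $\cdot g^n\colon N(-3n)\to N$ is on one hand the $n$-fold composite of the isomorphism $\cdot g$, hence an isomorphism, and on the other hand $\pi$ of the zero map $\widetilde N(-3n)\to\widetilde N$, hence zero; therefore $N=0$, and tracing back through the cohomology objects gives $M=0$. The essential input is entirely \cref{lemma:inequality of gkdimension} together with \cref{proposition:ATV2 Corollary7.9}; the only point needing a little care is the translation between $\qgr(A)$ and $\grmod(A)$ (that "$\pi(f)$ an isomorphism" forces $\ker f$ and $\coker f$ finite-dimensional) and the reduction to cohomology objects, so I do not expect a serious obstacle here.
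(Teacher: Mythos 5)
Your proof is correct and follows essentially the same route as the paper's: reduce to cohomology objects via the triangle $M(-3)\xrightarrow{\cdot g}M\to j_\ast\bL j^\ast M$, use \cref{lemma:inequality of gkdimension} to bound the GK-dimension by $1$, apply \cref{proposition:ATV2 Corollary7.9} to get $g$-torsion, and conclude from $\cdot g$ being simultaneously an isomorphism and nilpotent in $\qgr(A)$. Your extra care in lifting from $\qgr(A)$ to $\grmod(A)$ makes explicit a point the paper glosses over, but it is the same argument.
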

\begin{proof}
    Consider the canonical triangle in \(\boundedderived\qgr( A )\)
    \begin{equation*}
        M( -3 )
        \xrightarrow{\cdot g}
        M
        \to
        j_{ \ast } \bL j^{ \ast } M = 0
        \to
        M( -3 )[ 1 ].
    \end{equation*}
    Thus for any \( i\in\bZ \), the morphism induced on the \(i\)-th cohomology
    \begin{equation*}
        H^{ i }( M )( -3 )
        \xrightarrow{\cdot g}
        H^{ i }( M )
    \end{equation*}
    is an isomorphism in \( \qgr(A) \).
    Hence, the cokernel 
    \(
        H^{ i }( M ) / H^{ i }( M )g
    \) 
    is a finite dimensional graded \( A \)-module, that is its GK-dimension is zero.
    By \cref{lemma:inequality of gkdimension}, we have
    \begin{equation*}
        1 \geq \GKdim H^i(M)
    \end{equation*}
    for any \(i\in\bZ\) and hence, 
    \( 
        \dim H^{ i } ( M )_{ n } 
    \) 
    is bounded.
    By \cref{proposition:ATV2 Corollary7.9}, 
    \( 
        H^{ i } ( M ) 
    \) 
    is a \(g\)-torsion module.
    Since the multiplication map 
    \(
        [ \cdot g ]
    \) 
    on 
    \(
        H^{ i } ( M ) 
    \) 
    defined by 
    \eqref{eq:right multiplication of g} 
    is an isomorphism in 
    \(
        \qgr( A ) 
    \), 
    we have 
    \( 
        H^{ i } ( M ) = 0
    \) 
    in \( \qgr( A ) \) for any \( i \in \bZ \).
    So, \( M = 0 \) in 
    \(
        \boundedderived \qgr( A )
    \).
\end{proof}

This immediately implies the following proposition:

\begin{proposition}\label{proposition:skyscraper sheaves on E is a spanning class}
    Suppose that the order of automorphism of \( \sigma \) is infinite.
    Then the set 
    \(
        \{
            j_{\ast} \cO_{ p }
            \mid 
            p\in E
        \}
    \)
    is a spanning class of 
    \(
        \boundedderived\qgr ( A )
    \).
\end{proposition}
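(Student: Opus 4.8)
The plan is to deduce the spanning-class property directly from Lemma~\ref{lemma:support has non trivial intersection with elliptic curve}. Recall that a set \(\cS\subset\cT\) is a spanning class if for every object \(N\in\cT\), the vanishing \(\RHom_{\cT}(N,S)=0\) for all \(S\in\cS\) forces \(N=0\), and dually \(\RHom_{\cT}(S,N)=0\) for all \(S\in\cS\) forces \(N=0\). So I would split the argument into these two orthogonality conditions and reduce each to a statement about \(\bL j^{\ast}N\) on \(\dbcoh(E)\).

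For the first condition, suppose \(N\in\boundedderived\qgr(A)\) satisfies \(\RHom(N,j_{\ast}\cO_p)=0\) for every \(p\in E\). By the adjunction \eqref{eq:adunction of j}, \(\RHom_{\qgr(A)}(N,j_{\ast}\cO_p)\simeq\RHom_{\dbcoh(E)}(\bL j^{\ast}N,\cO_p)\), so the hypothesis says \(\bL j^{\ast}N\) is right-orthogonal to every skyscraper sheaf on \(E\). Since \(\{\cO_p\mid p\in E\}\) is a spanning class of \(\dbcoh(E)\) (standard, as \(E\) is smooth projective), this gives \(\bL j^{\ast}N=0\), and then \cref{lemma:support has non trivial intersection with elliptic curve} yields \(N=0\). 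For the second condition, suppose \(\RHom(j_{\ast}\cO_p,N)=0\) for every \(p\in E\); here one should use the \emph{left} adjoint \(j_{!}\) to \(\bL j^{\ast}\), which exists because \(\bL j^{\ast}\) is spherical (see \cref{example: restriction functor is a spherical functor}), together with the identity \(j_{!}\cO_p\simeq(j_{\ast}\cO_p)(3)[-1]\simeq j_{\ast}\cO_{\sigma^{3}(p)}[-1]\) coming from \cref{lemma: explicit desdription of grade shift on coh E}. Alternatively, and more cleanly, one can reuse the first half applied to a Serre-twisted object: by \cref{theorem:Serre functor of qgr(A)} the Serre functor is \(M\mapsto M(-3)[2]\), so \(\RHom(j_{\ast}\cO_p,N)\simeq\RHom(N,S_A(j_{\ast}\cO_p))^{\vee}=\RHom(N,j_{\ast}\cO_{\sigma^{-3}(p)}[2])^{\vee}\), and since \(\sigma^{-3}\) is a bijection on \(E\) the vanishing for all \(p\) is equivalent to \(\RHom(N,j_{\ast}\cO_q)=0\) for all \(q\in E\), which is exactly the situation already handled.

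The main point to get right is the reduction of the \(\RHom\)-vanishing on \(\qgr(A)\) to \(\bL j^{\ast}N=0\): one needs the derived adjunction \(\bL j^{\ast}\dashv j_{\ast}\) of \eqref{eq:adunction of j} and fully-faithfulness/exactness of \(j_{\ast}\), all of which are in place, plus the elementary fact that skyscrapers span \(\dbcoh(E)\). Once that reduction is made, the substance is entirely contained in \cref{lemma:support has non trivial intersection with elliptic curve}, whose proof already uses the infinite-order hypothesis on \(\sigma\) via \cref{proposition:ATV2 Corollary7.9}; so there is really no separate obstacle here, and the proof is short. One stylistic choice is whether to handle both orthogonality conditions or to invoke that for a category with a Serre functor, a spanning class for one side is automatically a spanning class; I would state the Serre-duality reduction explicitly since it makes the symmetry transparent and keeps the argument self-contained.
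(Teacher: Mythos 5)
Your proposal is correct and its core is exactly the paper's proof: use the adjunction \(\bL j^{\ast}\dashv j_{\ast}\) to translate \(\RHom(N,j_{\ast}\cO_p)=0\) into \(\RHom(\bL j^{\ast}N,\cO_p)=0\), invoke that skyscrapers span \(\dbcoh(E)\) to get \(\bL j^{\ast}N=0\), and conclude \(N=0\) by \cref{lemma:support has non trivial intersection with elliptic curve}. The only difference is that you also check the second orthogonality condition via Serre duality, which the paper silently omits (only the direction you prove first is used in \cref{Main theorem}); just note that the explicit formula \(S_A(M)=M(-3)[2]\) from \cref{theorem:Serre functor of qgr(A)} assumes \(\sigma\) is a translation, an hypothesis not present in the proposition, so for the general statement you should either appeal to the abstract existence of a Serre functor on \(\boundedderived\qgr(A)\) or restrict that half to the translation case.
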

\begin{proof}
    Let 
    \(
        M
        \in
        \boundedderived\qgr( A )
    \), 
    and assume that 
    \(
        \RHom(
             M, j_{ \ast } \cO_{ p }
        ) 
        = 
        0 
    \)
    for any \( p \in E\).
    Then, by the adjunction 
    \(
        \bL j^{ \ast } \dashv j_{ \ast }
    \), 
    we have 
    \(
        \RHom( \bL j^{ \ast } M , \cO_{ p } ) 
        = 
        0
    \)
    for any \( p \in E \). 
    Since the set 
    \( 
        \{ \cO_{ p } \}_{ p\in E } 
    \)
    is a spanning class of 
    \(
        \dbcoh ( E )
    \), 
    we have 
    \(
        \bL j^{ \ast } M 
        = 
        0
    \),
    and hence 
    \(
        M = 0
    \)
    by \cref{lemma:support has non trivial intersection with elliptic curve}.
    Therefore, the set 
    \(
        \{
            j_{ \ast } \cO_{ p }
        \}_{ 
            p\in E 
        }
    \) 
    is a spanning class of 
    \(
    \boundedderived \qgr ( A )
    \).
\end{proof}

%
%
\subsection{Support of graded modules restricted to the anti-canonical divisor}\label{subsec:support of graded modules restricted to anti-canonical divisor}
In this section, let \(A\) be a three-dimensional quadratic AS-regular algebra and let \((E, \sigma, \cL)\) be a geometric triple associated to \(A\). 
Assume that \(E\) is a smooth elliptic curve.

\begin{definition}
Let us define a subset \(E^{\mathrm{sp}}\subset E \) \footnote{sp stands for special points} as
\begin{equation}
    E^{\mathrm{sp}}
    \coloneqq
    \{
        p\in E
        \mid 
        \Supp( 
            \bL j^{ \ast }  M
        )
        =
        \{
            p
        \}
        \text{ for some }
        M\in \boundedderived \qgr( A ) 
    \}.
\end{equation}
\end{definition}

For any 
\(
    M \in \boundedderived \qgr ( A )
\), 
there exists a Leray spectral sequence
\begin{equation}
    E_{2}^{p,q} 
    =
    \bL^p j^{ \ast } H^{ q } ( M ) 
    \Rightarrow 
    \bL^{ p + q } j^{ \ast } ( M ).
\end{equation}
By \cref{lemma:derived pullback of graded modules}, we obtain that 
\(
    E_{ 2 }^{ p,q } 
    = 
    0 
\) 
if 
\(
    p
    \neq 
    0,1
\) 
and hence, this spectral sequence degenerates at \(E_{ 2 }\)-page.
Thus there exists an exact sequence 
\begin{equation*}
    0
    \to
    E_{ 2 }^{1, n - 1 } = \bL^{ 1 } j^{ \ast } H^{ n-1 }( M )
    \to
    \bL^{ n } j^{ \ast } M
    \to
    E_{ 2 }^{ 0, n } = \bL^{ 0 } j^{ \ast } H^{ n } ( M )
    \to
    0
\end{equation*}
so that we have 
\begin{equation*}
    \Supp(\bL j^{\ast} M) = \bigcup_{n\in\bZ} \Supp(\bL j^{\ast} H^{n}(M)).
\end{equation*}
This implies that 
\begin{equation}\label{eq:reduction of E circ}
    E^{\mathrm{sp}}
    =         
    \{
        p\in E
        \mid
        \Supp(\bL j^{\ast} M) = \{p\} \text{ for some } M\in \qgr( A )
    \}.
\end{equation}

\begin{lemma}\label{lemma:decomposition of E circ}
    There exists a decomposition of \( E^{\mathrm{sp}}\) as the following form:    
    \begin{equation*}
        E^{\mathrm{sp}}
        =
        E_{ 1 }
        \cup
        E_{ 2 }
    \end{equation*}
    where
    \begin{align}
        E_{ 1 }
        &
        =
        \{
            p\in E
            \mid 
            \Supp( 
                \bL j^{ \ast } M
            )
            =
            \{
                p
            \}
            \text{ for some $g$-torsion module } M
        \},\\
        E_{ 2 }
        &
        =
        \{
            p\in E
            \mid 
            \Supp( 
                \bL j^{ \ast } M
            )
            =
            \{
                p
            \}
            \text{ for some } M
            \text{ such that } \bL^1 j^{ \ast } M = 0
        \}.
    \end{align}
\end{lemma}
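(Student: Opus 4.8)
The plan is to reduce, via \eqref{eq:reduction of E circ}, to a single finitely generated graded module and then to peel off its \(g\)-torsion part. The inclusions \(E_{1}\subseteq E^{\mathrm{sp}}\) and \(E_{2}\subseteq E^{\mathrm{sp}}\) are immediate from the definitions, so the whole content is the reverse inclusion \(E^{\mathrm{sp}}\subseteq E_{1}\cup E_{2}\). Fix \(p\in E^{\mathrm{sp}}\); by \eqref{eq:reduction of E circ} there is \(M\in\grmod(A)\), regarded as an object of \(\boundedderived\qgr(A)\) via \(\pi\), with \(\Supp(\bL j^{\ast}M)=\{p\}\). Let \(K_{\infty}\coloneqq K_{M,\infty}\) be the \(g\)-torsion submodule of \(M\) and set \(N\coloneqq M/K_{\infty}\); both lie in \(\grmod(A)\) since \(A\) is noetherian. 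These two modules, \(K_{\infty}\) and \(N\), are the only candidates for the required witness (for \(E_{1}\) and for \(E_{2}\) respectively).

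The key step is to apply \(\bL j^{\ast}\) to the short exact sequence \(0\to K_{\infty}\to M\to N\to 0\). A standard saturation argument shows \(N\) has no \(g\)-torsion element, so \cref{lemma:derived pullback of graded modules} gives \(\bL^{1}j^{\ast}N=0\), i.e.\ \(\bL j^{\ast}N=j^{\ast}N\) is concentrated in cohomological degree \(0\). Feeding this, together with the vanishing \(\bL^{i}j^{\ast}=0\) for \(i\neq 0,1\), into the long exact sequence of cohomology sheaves of the triangle \(\bL j^{\ast}K_{\infty}\to\bL j^{\ast}M\to\bL j^{\ast}N\to\) forces the connecting maps to degenerate: one obtains an isomorphism \(\bL^{1}j^{\ast}K_{\infty}\xrightarrow{\ \sim\ }\bL^{1}j^{\ast}M\) and a short exact sequence \(0\to j^{\ast}K_{\infty}\to j^{\ast}M\to j^{\ast}N\to 0\). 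Hence
\begin{equation*}
    \{p\}=\Supp(\bL j^{\ast}M)=\Supp(\bL^{1}j^{\ast}M)\cup\Supp(j^{\ast}K_{\infty})\cup\Supp(j^{\ast}N).
\end{equation*}

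The proof then concludes by a dichotomy on the closed subset \(\Supp(j^{\ast}N)\subseteq\{p\}\), which is either \(\{p\}\) or \(\emptyset\). If \(\Supp(j^{\ast}N)=\{p\}\), then since \(\bL^{1}j^{\ast}N=0\) we have \(\Supp(\bL j^{\ast}N)=\{p\}\), so \(N\) witnesses \(p\in E_{2}\). If instead \(\Supp(j^{\ast}N)=\emptyset\), then \(j^{\ast}N=0\), so \(j^{\ast}K_{\infty}\xrightarrow{\ \sim\ }j^{\ast}M\); combined with \(\bL^{1}j^{\ast}K_{\infty}\xrightarrow{\ \sim\ }\bL^{1}j^{\ast}M\) this gives \(\Supp(\bL j^{\ast}K_{\infty})=\Supp(\bL j^{\ast}M)=\{p\}\), and since \(K_{\infty}\) is \(g\)-torsion we get \(p\in E_{1}\). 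In either case \(p\in E_{1}\cup E_{2}\).

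I do not expect a genuine obstacle here. The only steps requiring care are the bookkeeping in the long exact sequence — tracking which \(\bL^{i}j^{\ast}\) vanish and in which cohomological degree each sheaf lives — and the elementary verification that \(M/K_{M,\infty}\) is honestly \(g\)-torsion-free, so that \cref{lemma:derived pullback of graded modules} applies. Note that the infinite-order hypothesis on \(\sigma\) is not used for this lemma; it enters only later, when one shows that \(E_{1}\) and \(E_{2}\) are proper subsets of \(E\).
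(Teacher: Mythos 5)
Your proof is correct and follows essentially the same route as the paper: reduce to a single module \(M\in\grmod(A)\) via \eqref{eq:reduction of E circ}, split off the \(g\)-torsion part \(K_{\infty}\), apply \cref{lemma:derived pullback of graded modules} to \(Q=M/K_{\infty}\), and run the long exact sequence for \(0\to K_{\infty}\to M\to Q\to 0\). The only divergence is the final dichotomy: the paper splits on whether \(\Supp(\bL j^{\ast}K_{\infty})\) equals \(\{p\}\) or is empty and, in the empty case, invokes \cref{lemma:support has non trivial intersection with elliptic curve} (which assumes \(\sigma\) has infinite order) to conclude \(K_{\infty}=0\), whereas your split on \(\Supp(j^{\ast}N)\) reaches the same two conclusions without that lemma, which substantiates your closing remark that the infinite-order hypothesis is not actually needed for this decomposition.
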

\begin{proof}
    For any 
    \(
        p
        \in 
        E^{\mathrm{sp}}
    \), 
    there exists 
    \(
        M \in \qgr( A )
    \) 
    such that 
    \(
        \Supp( 
            \bL j^{ \ast } M
         )
         =
         \{
            p
         \}
    \)
    by \eqref{eq:reduction of E circ}.
    Set 
    \(
        Q 
        \coloneqq 
        M / K_{ \infty }
    \)
    where \(K_{ \infty }\) is defined in \cref{def:the module K n}.
    Note that \(K_{ \infty }\) is a \(g\)-torsion module by definition and \(\bL^1 j^{ \ast } Q = 0\) by \cref{lemma:derived pullback of graded modules}.
\begin{comment}
    Indeed, considering the canonical exact triangle 
    \begin{equation*}
        \begin{tikzcd}
            Q(-3)
            \arrow[r, "\cdot g"]
            &
            Q
            \arrow[r]
            &
            j_{ \ast } \bL j^{ \ast } Q
            \arrow[r]
            &
            Q(-3)[1],
        \end{tikzcd}
    \end{equation*}
    we have 
    \(
        \bL^{ 1 } j^{ \ast } Q = 0
    \)
    by the injectivity of the multiplication map \([\cdot g]\) defined in \eqref{eq:right multiplication of g}, 
    which follows from the definition of \(Q\).
\end{comment}
    Then by the long exact sequence
    \begin{equation}
        \begin{tikzcd}
            0
            \arrow[r]
            &
            \bL^1 j^{\ast} K_{ \infty }
            \arrow[r,"\sim"]
            &
            \bL^1 j^{\ast} M 
            \arrow[r]
            &
            \bL^1 j^{\ast} Q = 0 
            \arrow[out=-10, in=170]{dll}
            &
            \\
            &
            j^{\ast} K_{ \infty }
            \arrow[r]
            &
            j^{\ast} M
            \arrow[r]
            &
            j^{ \ast } Q
            \arrow[r]
            &
            0,
        \end{tikzcd}
    \end{equation}
    we have 
    \(
        \Supp(
            \bL j^{ \ast } K_{ \infty }
        )
        \subset 
        \Supp(
            \bL j^{ \ast } M 
        )
        =
        \{
            p
        \}.
    \)
    If 
    \(
        \Supp(
            \bL j^{ \ast } K_{ \infty }
        )
        =
        \{
            p
        \}
    \),
    then \(p\in E_{ 1 }\)
    since \(K_{ \infty }\) is a \(g\)-torsion module.
    If 
    \(            
        \Supp(
            \bL j^{ \ast } K_{ \infty }
        )
        = 
        \emptyset
    \), that is 
    \(
        \bL j^{ \ast } K_{ \infty } = 0
    \),
    then 
    \(
        K_{ \infty } = 0
    \) 
    by \cref{lemma:support has non trivial intersection with elliptic curve}.
    Hence 
    \(
        M
        \simeq 
        Q
    \) 
    and  
    \(
        p \in E_{ 2 }
    \) 
    since 
    \(
        \bL^{1} j^{ \ast } Q = 0
    \).
\end{proof}

\begin{lemma}\label{lemma:vanishing of g-torsion module}
    Let 
    \(
        M\in\grmod( A )
    \). 
    Assume that \(M\) is a \(g\)-torsion module. 
    Then the followings are equivalent.
    \begin{enumerate}
        \item \( M = 0 \) as objects of \( \qgr( A ) \), i.e., \(M\) is a finite dimensional module.
        \item \( j^{ \ast } M = 0 \).
        \item  \( \bL^1 j^{ \ast } M = 0 \).
        \item \(\bL j^{ \ast } M = 0\).
    \end{enumerate}
\end{lemma}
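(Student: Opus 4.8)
The plan is to prove the four conditions equivalent via the cycle $(1)\Rightarrow(4)\Rightarrow(2)\Rightarrow(1)$, supplemented by the two arrows $(4)\Rightarrow(3)$ and $(3)\Rightarrow(1)$. The implications out of $(1)$ and out of $(4)$ are formal: if $M$ is finite dimensional then $\pi(M)=0$ in $\qgr(A)$ and so $\bL j^{\ast}M=0$; conversely, if $\bL j^{\ast}M=0$ then its cohomology objects $\bL^{0}j^{\ast}M=j^{\ast}M$ and $\bL^{1}j^{\ast}M$ both vanish, which is $(2)$ and $(3)$. Hence all the content lies in $(2)\Rightarrow(1)$ and $(3)\Rightarrow(1)$, and I would reduce both to one elementary fact about $g$-torsion modules.

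That fact is the following: for a finitely generated $g$-torsion module $N\in\grmod(A)$, if \emph{either} $N/Ng$ \emph{or} $K_{N,1}$ is finite dimensional, then $N$ is finite dimensional. The proof I have in mind: because $N$ is finitely generated and each generator is killed by a power of $g$, centrality of $g$ yields $Ng^{n}=0$ for some $n\ge 1$. In the descending chain $N=Ng^{0}\supseteq Ng^{1}\supseteq\cdots\supseteq Ng^{n}=0$, multiplication by $g^{i}$ induces a surjection $(N/Ng)(-3i)\twoheadrightarrow Ng^{i}/Ng^{i+1}$, so finiteness of $N/Ng$ propagates to every graded piece and hence to $N$; dually, in the ascending chain $0=K_{N,0}\subseteq K_{N,1}\subseteq\cdots\subseteq K_{N,n}=N$, multiplication by $g^{i}$ induces an injection $(K_{N,i+1}/K_{N,i})(-3i)\hookrightarrow K_{N,1}$, the image being annihilated by $g$, so finiteness of $K_{N,1}$ propagates to every graded piece and hence to $N$.

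Granting this, I would argue $(2)\Rightarrow(1)$ as follows: under $A/g\simeq B$ the functor $j^{\ast}$ is $-\otimes_{A}B$, so $j^{\ast}M=0$ in $\qgr(B)$ says precisely that $M\otimes_{A}B\simeq M/Mg$ lies in $\tors(B)$, i.e., is finite dimensional; the first half of the reduction with $N=M$ then gives that $M$ is finite dimensional. For $(3)\Rightarrow(1)$ I would invoke \cref{lemma:derived pullback of graded modules}, which gives $\bL^{1}j^{\ast}M\simeq j^{\ast}\bigl(K_{M,1}(-3)\bigr)$; since shifting is an autoequivalence, $\bL^{1}j^{\ast}M=0$ forces $j^{\ast}K_{M,1}=0$, that is, $K_{M,1}/K_{M,1}g$ is finite dimensional, and as $K_{M,1}g=0$ this simply says $K_{M,1}$ is finite dimensional, so the second half of the reduction with $N=M$ concludes.

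I do not anticipate a real obstacle: the only delicate points are keeping the graded shifts straight in the two filtrations and checking that the connecting maps are, respectively, surjective and injective with image inside $K_{N,1}$, together with the translation of the vanishing $j^{\ast}M=0$ into honest finite dimensionality of $M/Mg$ via the definition of $\tors(B)$. Beyond that, the argument is a formal manipulation of the adjunction \eqref{eq:adunction of j} and \cref{lemma:derived pullback of graded modules}.
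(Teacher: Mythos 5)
Your proposal is correct and takes essentially the same route as the paper's proof: both implications $(2)\Rightarrow(1)$ and $(3)\Rightarrow(1)$ are handled via the $g$-adic filtration $N\supseteq Ng\supseteq\cdots\supseteq Ng^{n}=0$ with subquotients controlled by $N/Ng$, respectively the kernel filtration $0=K_{N,0}\subseteq\cdots\subseteq K_{N,n}=N$ with subquotients embedding into shifts of $K_{N,1}$, and your translation of $(3)$ into finite-dimensionality of $K_{M,1}$ uses \cref{lemma:derived pullback of graded modules} exactly as the paper does. The only difference is cosmetic: you track honest finite-dimensionality in $\grmod(A)$ where the paper argues vanishing in $\qgr(A)$.
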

\begin{proof}
    The implications
    \(
       (4) \Rightarrow (2)
    \)
    and
    \(
        (4) \Rightarrow (3)
    \)
    are obvious.
    Also, the implication 
    \(
        (1) \Rightarrow (4)
    \) 
    is obvious. In the rest of the proof we show the implications
    \(
       (3) \Rightarrow (1)
    \)
    and
    \(
       (2) \Rightarrow (1)
    \)
    separately.

    \(
        ( 3 )
        \Rightarrow
        ( 1 )\colon
    \)
    By \cref{lemma:derived pullback of graded modules}, we obtain 
\begin{comment}
    The canonical exact triangle
    \begin{equation*}
        M( -3 )
        \xrightarrow{\cdot g}
        M
        \to
        j_{ \ast } \bL j^{ \ast } M
        \to
        M( -3 )[ 1 ]
    \end{equation*}
    in
    \(
       \derived ^{ b } \qgr A
    \)
    yields the long exact sequence
    \begin{equation*}
        0
        \to
        j_{ \ast } \bL^{ 1 } j^{ \ast } M
        \to
        M( -3 )
        \xrightarrow{\cdot g}
        M
        \to
        j_{ \ast }j^{\ast} M
        \to 
        0
    \end{equation*}
    in
    \(
       \qgr ( A )
    \),
    which implies that 
\end{comment}
    \( 
        j_{ \ast }\bL^{1}j^{\ast} M \simeq K_{ 1 }(-3)
    \) 
    where \(K_{ n }\) is defined in \cref{def:the module K n} for \(n\in\bZ_{ >0 }\).
    Hence the assumption \(\bL^{ 1 }j^{ \ast } M = 0\) of the item (3) implies \(K_{ 1 } = 0\) in \(\qgr(A)\).
    For any \(\ell>0\), we have an exact sequence
    \begin{equation*}
        0
        \to
        K_{ \ell -1 }
        \to
        K_{ \ell }
        \xrightarrow{g^{ \ell -1 }}
        K_{ 1 }( 3\ell -3 )
    \end{equation*}
    in \(\grmod ( A )\), which hence implies
    \(
       K _{ \ell - 1 }
       \simeq
       K _{ \ell }
    \)
    in
    \(
       \qgr ( A )
    \)
    for all
    \(
       \ell > 0
    \).
    This implies the item (1), since the assumption that
    \(
       M
    \)
    is a
    \(
       g
    \)-torsion module implies
    \(
       M = K _{ \ell }
    \)
    for
    \(
       \ell \gg 0
    \).

    \(
       ( 2 )
       \Rightarrow
       ( 1 )\colon
    \)
    For any \( \ell > 0 \), there exists an exact sequence
    \begin{equation*}
        \left( 
            M/ M g^{ \ell } 
        \right) ( -3 )
        \xrightarrow{\cdot g}
        M/ M g^{ \ell + 1 }
        \to 
        M/ M g
        \to
        0
    \end{equation*}
    in
    \(
       \grmod ( A )
    \),
    where 
    \(
        Mg^{ \ell } 
        \coloneqq 
        \Image ( 
            M( -3 \ell )
            \xrightarrow{
                g ^{ \ell }
            }
            M 
        )
    \).
    Combined with the assumption
    \(
       M / M g
       =
       j ^{ \ast } M
       =
       0
       \in
       \qgr ( A )
    \),
    this inductively implies that
    \(
       M / M g ^{ \ell }
       =
       0
       \in
       \qgr ( A )
    \)
    for
    \(
       \ell > 0
    \).
    This implies the item (1), since the assumption that
    \(
       M
    \)
    is a
    \(
       g
    \)-torsion module implies
    \(
        M
        =
        M / 0
        =
        M / M g ^{ \ell }
    \)
    for
    \(
       \ell \gg 0
    \).
\end{proof}

\begin{lemma}\label{lemma:support of derived pullback of g-torsion module}
    Let \(M\in\grmod ( A )\). 
    Assume that \(M\) is annihilated by \(g\), i.e.,
    \(
        j_{ \ast }\cF
        \simeq
        M
    \)
    for some \(\cF\in\coh(E)\).
    Then 
    \begin{equation*}
        \Supp(
            \bL^{ 1 } j^{ \ast } M
        )
        =
         \sigma^{ -3 }
         \Supp(
            \cF
         ).
    \end{equation*}
\end{lemma}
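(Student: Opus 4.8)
The plan is to reduce the statement to the explicit description of the grading shift on \(\coh(E)\) and then keep track of supports. Since \(M\) is annihilated by \(g\), the multiplication map \(\cdot g\colon M(-3)\to M\) vanishes, so \(K_{M,1}=\ker\!\left(g\colon M\to M(3)\right)=M\). Hence \cref{lemma:derived pullback of graded modules}(1) gives an isomorphism \(\bL^1 j^\ast M\simeq j^\ast(M(-3))\) in \(\coh(E)\), and the whole problem reduces to identifying the support of the right-hand side.

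Next I would use the hypothesis \(M\simeq j_\ast\cF\): as \(j_\ast\) is fully faithful, the counit \(j^\ast j_\ast\to\id\) is an isomorphism, so \(j^\ast M\simeq\cF\). Since \(j^\ast\) is extension of scalars it commutes with the grading shift, whence \(j^\ast(M(-3))\simeq(j^\ast M)(-3)\simeq\cF(-3)\), where \((-3)\) now denotes the grading-shift autoequivalence of \(\qgr(B)\simeq\coh(E)\) coming from \eqref{eq:eqiuvalence of coh and qgr}. By \cref{lemma: explicit desdription of grade shift on coh E} the degree \(+1\) shift is the functor \(\cG\mapsto\sigma_\ast(\cG\otimes\cL)\), so on supports \(\Supp(\cG(1))=\sigma(\Supp(\cG))\) (tensoring by an invertible sheaf does not move the support, and pushforward along the automorphism \(\sigma\) transports it by \(\sigma\)). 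Because the shift is an autoequivalence, this propagates to \(\Supp(\cG(n))=\sigma^{n}(\Supp(\cG))\) for every \(n\in\bZ\); taking \(n=-3\) and \(\cG=\cF\) yields \(\Supp(\bL^1 j^\ast M)=\sigma^{-3}(\Supp(\cF))\), as claimed.

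The argument is essentially bookkeeping, and the only step where I expect to have to be careful is the sign of the shift, i.e.\ that the degree \(-3\) shift corresponds to \(\sigma^{-3}\) rather than \(\sigma^{3}\). This can be sanity-checked on skyscraper sheaves via \eqref{eq:action of shift of gradings on skyscraper sheaves}, which gives \((j_\ast\cO_p)(-3)\simeq j_\ast\cO_{\sigma^{-3}(p)}\); for \(M=j_\ast\cO_p\) the reduction above then returns \(\Supp(\bL^1 j^\ast M)=\{\sigma^{-3}(p)\}=\sigma^{-3}(\Supp(\cO_p))\), which matches the asserted formula.
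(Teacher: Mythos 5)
Your proposal is correct and follows essentially the same route as the paper: both reduce via \cref{lemma:derived pullback of graded modules} to identifying $\bL^{1}j^{\ast}M$ with the $(-3)$-shift applied to $\cF$, and then read off the support from \cref{lemma: explicit desdription of grade shift on coh E}. The only cosmetic difference is that you commute $j^{\ast}$ past the grading shift and use $j^{\ast}j_{\ast}\simeq\id$, whereas the paper computes $(j_{\ast}\cF)(-3)$ directly as $j_{\ast}\bigl(\sigma^{-3}_{\ast}\cF\otimes(\text{line bundles})\bigr)$; the conclusion is the same.
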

\begin{proof}
    If 
    \(
        \Supp( \cF )
        =
        \emptyset
    \),
    then \(j^{ \ast } M \simeq \cF \simeq 0\). 
    The implication
    \(
       ( 2 )
       \Rightarrow
       ( 3 )
    \)
    of~\cref{lemma:vanishing of g-torsion module} implies that \(\bL^{ 1 }j^{ \ast } M \simeq 0\) and 
    hence, the assertion holds in this case.
    We may assume that \( \cF\neq 0 \).
\begin{comment}
    From the canonical exact triangle 
    \begin{equation*}
        M( -3 )
        \to
        M
        \to
        j_{ \ast } \bL j^{ \ast } M
        \to
        M( -3 )[ 1 ]
    \end{equation*}
    we have an exact sequence 
    \begin{equation*}
        0
        \to
        j_{ \ast } \bL^{ 1 } j^{ \ast } M
        \to
        M( -3 )
        \xrightarrow{\cdot g}
        M
        \to
        j_{ \ast } \bL^{ 0 } j^{ \ast } M
        \to
        0.
    \end{equation*}
    Since \( M \) is annihilated by \( g \), 
    the middle arrow of the exact sequence is trivial and 
    hence, we have 
    \(
        j_{ \ast } \bL^{ 1 } j^{ \ast } M 
        \simeq 
        M( -3 ) 
    \).
\end{comment}
    Since \(M\) is annihilated by \(g\), we have \(M = K_{ 1 }\) where \(K_{ 1 }\) is defined in \cref{def:the module K n}.
    By \cref{lemma:derived pullback of graded modules}, we have 
    \begin{equation*}
        \bL^{ 1 }j^{ \ast } M
        \simeq 
        j^{ \ast } \left( M ( -3 ) \right)
        \simeq 
        j^{ \ast } ((j_{ \ast }\cF)( -3 )).
    \end{equation*}
    By \cref{lemma: explicit desdription of grade shift on coh E}, we have 
    \begin{equation*}
        \left(
            j_{ \ast } \cF
        \right) ( -1 )
        \simeq
        j_{ \ast } (
            \sigma^{ -1 }_{ \ast }(
                \cF
            )
            \otimes 
            \cL^{ -1 }
        ).
    \end{equation*}
    This inductively implies that
    \begin{equation*}
        \left(
            j_{ \ast }\cF
        \right)
        ( -3 )
        \simeq 
        j_{ \ast }
        \left(
            \left(
                \sigma^{ -3 }_{ \ast } \cF
            \right)
            \otimes 
            \sigma^{ 2 \ast}\cL^{-1}
            \otimes 
            \sigma^{ \ast } \cL^{-1}
            \otimes 
            \cL^{-1}
        \right)
    \end{equation*}
    and hence, 
    \(
        \bL^{ 1 } j^{ \ast } M
        \simeq 
        \left(
            \sigma^{ -3 }_{ \ast } \cF
        \right)
        \otimes 
        \sigma^{ 2 \ast}\cL^{-1}
        \otimes 
        \sigma^{ \ast } \cL^{-1}
        \otimes 
        \cL^{-1}
    \).
    Therefore, we have 
    \begin{equation*}
        \Supp( 
            \bL^{ 1 } j ^{\ast} M
         )
        =
        \Supp(
            \sigma^{ -3 }_{\ast} \cF
        )
        =
        \sigma^{ -3 }
        \Supp( 
            \cF
        ).
    \end{equation*}
\end{proof}

\begin{lemma}\label{lemma:support of E_1}
    We have
    \begin{equation}
        E_{ 1 }
        \subset 
        \bigcup_{n\in \bZ_{>0}}
        E ^{ \sigma ^{ 3 n } },
    \end{equation}
    where 
    \(     
        E ^{ \sigma ^{ 3 n } }
        =
        \{
            p\in E
            \mid 
            \sigma^{ 3n }( p )
            =
            p
        \}
    \)
    is the fixed locus of the automorphism
    \(
        \sigma ^{ 3 n }
    \)
    of
    \(
       E
    \).
    In particular, if the automorphism \(\sigma^{ 3 }\) has infinite order, 
    then \(E_{ 1 } = \emptyset\).
\end{lemma}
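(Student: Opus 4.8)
The plan is to argue by contradiction, decomposing a $g$-torsion witness into its $g$-power kernels and following how $\sigma$ transports the supports of the resulting $g$-annihilated subquotients on $E$. Fix $p\in E_1$ and choose a $g$-torsion module $M\in\grmod(A)$ with $\pi(M)\neq 0$ and $\Supp(\bL j^{\ast}M)=\{p\}$; since $M$ is finitely generated and $g$-torsion, fix $\ell\geq 1$ with $Mg^{\ell}=0$, so that, writing $K_i\coloneqq K_{M,i}$ as in \cref{def:the module K n}, we have a finite filtration $0=K_0\subseteq K_1\subseteq\cdots\subseteq K_{\ell}=M$. Multiplication by $g$ sends $K_i$ into $K_{i-1}$ (if $xg^i=0$ then $(xg)g^{i-1}=0$), so $g$ acts by zero on each $\overline{K}_i\coloneqq K_i/K_{i-1}$; hence $\overline{K}_i$ is a $B$-module and $\pi(\overline{K}_i)$ lies in the image of $j_{\ast}$, say $\pi(\overline{K}_i)\simeq j_{\ast}\mathcal{G}_i$ with $\mathcal{G}_i\in\coh(E)$. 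Assume, for contradiction, that $p\notin E^{\sigma^{3n}}$ for every $n\in\bZ_{>0}$, i.e.\ $\sigma^{3i}(p)\neq p$ for $i=1,\dots,\ell$.

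\emph{Bounding the supports.} By \cref{lemma:derived pullback of graded modules} we have $\bL^1 j^{\ast}M\simeq j^{\ast}\bigl(K_1(-3)\bigr)$, and since $K_1=\overline{K}_1$ is $g$-annihilated, \cref{lemma:support of derived pullback of g-torsion module} (applied to $\overline{K}_1$, which computes $\bL^1 j^{\ast}M$) together with \cref{lemma: explicit desdription of grade shift on coh E} gives $\Supp(\bL^1 j^{\ast}M)=\sigma^{-3}\Supp(\mathcal{G}_1)$; being the support of a cohomology sheaf of $\bL j^{\ast}M$, this set lies in $\{p\}$, so $\Supp(\mathcal{G}_1)\subseteq\{\sigma^{3}(p)\}$. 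Moreover, for each $i$ the rule $x+K_{i-1}\mapsto xg^{i-1}$ defines an injection of graded $A$-modules $\overline{K}_i\hookrightarrow K_1(3(i-1))$; applying the exact functor $\pi$ and iterating \cref{lemma: explicit desdription of grade shift on coh E} (which identifies the grading shift $(1)$ with $\sigma_{\ast}(-\otimes\mathcal{L})$ on $\coh(E)$) we obtain an inclusion $\mathcal{G}_i\hookrightarrow\sigma^{3(i-1)}_{\ast}(\mathcal{G}_1\otimes\mathcal{N}_i)$ for some line bundle $\mathcal{N}_i$, whence $\Supp(\mathcal{G}_i)\subseteq\sigma^{3(i-1)}\Supp(\mathcal{G}_1)\subseteq\{\sigma^{3i}(p)\}$ for $i=1,\dots,\ell$.

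\emph{The contradiction, and the final assertion.} The functor $j^{\ast}=\bL^0 j^{\ast}$ is right exact, so running it along the filtration $K_{\bullet}$ yields $\Supp(j^{\ast}M)\subseteq\bigcup_{i=1}^{\ell}\Supp(\mathcal{G}_i)\subseteq\{\sigma^{3}(p),\dots,\sigma^{3\ell}(p)\}$. But also $\Supp(j^{\ast}M)=\Supp(\bL^0 j^{\ast}M)\subseteq\Supp(\bL j^{\ast}M)=\{p\}$, and by assumption $p\notin\{\sigma^{3}(p),\dots,\sigma^{3\ell}(p)\}$; therefore $j^{\ast}M=0$. Since $M$ is $g$-torsion, the implication $(2)\Rightarrow(1)$ of \cref{lemma:vanishing of g-torsion module} forces $\pi(M)=0$, a contradiction. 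This proves $E_1\subseteq\bigcup_{n\in\bZ_{>0}}E^{\sigma^{3n}}$. For the last sentence: if $\sigma^{3}$ has infinite order then so does $\sigma$, and an infinite-order automorphism of an elliptic curve $E$ must be a translation $\tau_a$ by a non-torsion point — writing $\sigma=\tau_a\circ\phi$ with $\phi$ fixing the origin $O$, the group $\Aut(E,O)$ is finite and $(\tau_a\circ\phi)^{\operatorname{ord}(\phi)}=\tau_b$ with $b=(1+\phi+\cdots+\phi^{\operatorname{ord}(\phi)-1})(a)=0$ (the operator is annihilated by $\phi-1\neq 0$ in the zero-divisor-free ring $\operatorname{End}(E)$), so $\phi\neq\operatorname{id}$ would make $\sigma$ of finite order. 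Hence each $\sigma^{3n}=\tau_{3na}$ ($n>0$) is a nontrivial, fixed-point-free translation, so $E^{\sigma^{3n}}=\emptyset$ and $E_1=\emptyset$.

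\emph{Main difficulty.} The delicate point is entirely the support bookkeeping in the second paragraph: one must check that the $\sigma^{-3}$ coming out of $\bL^1 j^{\ast}$ and the $\sigma^{3(i-1)}$ coming from the embedding $\overline{K}_i\hookrightarrow K_1(3(i-1))$ combine so that $\Supp(\mathcal{G}_i)$ is confined to the single point $\sigma^{3i}(p)$, which the contradiction hypothesis keeps off $\{p\}$. Everything else is formal — only the subadditivity of supports along the right-exact functor $j^{\ast}$ (so that upper bounds along the filtration suffice) and the vanishing criterion \cref{lemma:vanishing of g-torsion module} are used.
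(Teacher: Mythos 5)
Your proof is correct. It shares the skeleton of the proof in the text --- reduce to a finitely generated \(g\)-torsion witness \(M\) with \(Mg^{\ell}=0\), filter by the kernels \(K_{i}=K_{M,i}\), identify the successive quotients with coherent sheaves on \(E\) via \(j_{\ast}\), and track supports through \cref{lemma: explicit desdription of grade shift on coh E} and \cref{lemma:support of derived pullback of g-torsion module} --- but it handles the key bookkeeping step differently. The text runs a descending induction from \(K_{n}=M\) down to \(K_{1}\) through the long exact sequences of \(\bL j^{\ast}\) applied to \(0\to K_{\ell-1}\to K_{\ell}\to j_{\ast}\cF_{\ell}\to 0\), together with the identification \(\bL^{1}j^{\ast}K_{\ell}\simeq\bL^{1}j^{\ast}M\), and closes the loop with \(\{p\}=\sigma^{-3}\Supp(j^{\ast}K_{1})\). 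You instead use the degree-zero injections \(K_{i}/K_{i-1}\hookrightarrow K_{1}(3(i-1))\), \(x\mapsto xg^{i-1}\), to bound all successive quotients at once by \(\Supp(\mathcal{G}_{1})\subseteq\{\sigma^{3}(p)\}\) (read off from \(\bL^{1}j^{\ast}M\simeq j^{\ast}(K_{1}(-3))\)), and then derive the contradiction from \(\Supp(j^{\ast}M)\subseteq\{p\}\cap\{\sigma^{3}(p),\dots,\sigma^{3\ell}(p)\}=\emptyset\) together with the implication \((2)\Rightarrow(1)\) of \cref{lemma:vanishing of g-torsion module}. This is a legitimate and arguably cleaner variant: the multiplicative embeddings into shifts of \(K_{1}\) replace the inductive long-exact-sequence argument, and only right-exactness of \(j^{\ast}\) along the filtration is needed. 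For the final assertion, your argument that an infinite-order automorphism of \(E\) must be a fixed-point-free translation (via the integrality of \(\End(E)\)) is correct but more than necessary; it suffices to note that an automorphism of an elliptic curve with a fixed point has finite order, so a fixed point of \(\sigma^{3n}\) would force \(\sigma^{3}\) to have finite order.
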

\begin{proof}
    Take a point 
    \(
        p\in E_{ 1 }
    \).
    Then there exists a \(g\)-torsion module \(M\) such that 
    \(
        \Supp( 
            \bL j^{ \ast } M 
        ) 
        = 
        \{ p \}
    \). 
    The equivalence of the conditions
    (2) and (3) of~\cref{lemma:vanishing of g-torsion module} implies that both \(j^{ \ast }M\) and \(\bL^{1} j^{ \ast } M \) are not trivial, so that
    \begin{equation}\label{eq:support of M}
        \Supp( j^{ \ast } M  )
        =
        \Supp( \bL^{ 1 } j^{ \ast } M )
        =
        \{p\}.
    \end{equation}
    Since \( M \) is a \(g\)-torsion module, \( M = K_{ n } \) for some \(n\), where \( K_{ n } \) is defined in~\cref{def:the module K n}.
    We have a filtration 
    \begin{equation*}
        0 = K_{0}
        \subset 
        K_1 
        \subset 
        \cdots
        \subset 
        K_{n-1}
        \subset
        K_n = M
    \end{equation*}
    whose successive quotients 
    \(
        K_{ \ell } / K_{ \ell - 1 }
    \) 
    come from 
    \(
        \coh(E)
    \), i.e., there are coherent sheaves \( \cF_{ \ell } \) on \(E\) such that 
    \begin{equation}\label{eq:succ quot of filtration is isom to coh on E }
        j_{\ast} \cF_{ \ell } 
        \simeq 
        K_{ \ell } / K_{ \ell - 1 }
    \end{equation}
    in 
    \(
        \qgr( A )
    \)
    since 
    \(
        K_{ \ell } / K_{ \ell-1 }
    \)
    are 
    \(
        A / g
    \)-modules for any \( \ell \).
    For any \(\ell\geq 1\), we have a long exact sequence
    \begin{equation}\label{eq:long exact sequence 3}
        \begin{tikzcd}
            0
            \arrow[r]
            &
            \bL^1 j^{\ast} K_{\ell-1}
            \arrow[r, "\alpha^1_{ \ell }"]
            &
            \bL^1 j^{\ast} K_{\ell} 
            \arrow[r, "\beta^1_{ \ell }"]
            &
            \bL^1 j^{\ast} j_{\ast} \cF_{ \ell }
            \arrow[out=-10, in=170, "\delta_{ \ell }"']{dll}
            &
            \\
            &
            j^{\ast} K_{\ell-1}
            \arrow[r, "\alpha^0_{ \ell }"]
            &
            j^{\ast} K_{ \ell }
            \arrow[r, "\beta^0_{ \ell }"]
            &
            \cF_{ \ell }
            \arrow[r]
            &
            0.
        \end{tikzcd}
    \end{equation}
    Applying 
    \eqref{eq: exact triangle associaed to adjunction}
    to 
    \(K_{ \ell }\hookrightarrow M\) with \(F= \bL j^{ \ast }\) and \(R=j_{ \ast }\), we obtain a morphism of exact triangles. From this exact triangle, we obtain a morphism between the following long exact sequences.
    \begin{equation*}
        \begin{tikzcd}
            0
            \arrow[r]
            &
            j_{ \ast }\bL^1 j^{\ast} M
            \arrow[r]
            &
            M( -3 )
            \arrow[r, "g"]
            &
            M
            \arrow[r]
            &
            j_{ \ast } j^{ \ast } M
            \arrow[r]
            &
            0
            \\
            0
            \arrow[r]
            &
            j_{ \ast } \bL^{ 1} j^{\ast} K_{\ell}
            \arrow[r]
            \arrow[u]
            &
            K_{ \ell }( -3 )
            \arrow[r, "g"]
            \arrow[u]
            &
            K_{ \ell }
            \arrow[r]
            \arrow[u]
            &
            j_{ \ast }j^{ \ast } K_{ \ell }
            \arrow[r]
            \arrow[u]
            &
            0.
        \end{tikzcd}
    \end{equation*}
    Then we have the following commutative diagram:
    \begin{equation*}
        \begin{tikzcd}
            \bL^{ 1 } j^{ \ast } M 
            \arrow[r, "\sim"]
            &
            j^{ \ast } 
            \left(
                K_{ M, 1 }( -3 )
            \right)
            \\
            \bL^{ 1 } j^{ \ast } K_{ \ell }
            \arrow[r,"\sim"]
            \arrow[u]
            &
            j^{ \ast } 
            \left(
                K_{ K_{ \ell }, 1 }( -3 )
            \right)
            \arrow[u]
        \end{tikzcd}
    \end{equation*}
    The right vertical arrow is an isomorphism since \(K_{ M, 1 }= K_{ K_{ \ell }, 1 }\) as graded \(A\)-module if \(\ell \geq 1\).
    Hence the left vertical arrow is an isomorphism and we have 
    \begin{equation}\label{eq: support of K ell}
        \Supp(
            \bL^{ 1 } j^{ \ast } K_{ \ell }
        )
        =
        \Supp(
            \bL^{ 1 } j^{ \ast } M
        )=
        \{p\}.
    \end{equation}
    Then we claim that:
    \begin{claim}\label{claim}
        For \(\ell \geq 1\), 
        \( \Supp( j^ {\ast} K_{ \ell } ) \subset \{p, \dots, \sigma^{ -3( n-\ell ) }(p)\} \). 
    \end{claim}
    \begin{proof}
        We prove the claim by using descending induction on \(\ell\).
        If \(\ell = n\), then the assertion holds by \eqref{eq:support of M}.
        For \(n - 1 \geq \ell\geq 1\), assume the induction hypothesis:
        \begin{equation*}
            \Supp( j^ {\ast} K_{ \ell + 1} ) 
            \subset 
            \{p, \dots, \sigma^{ -3( n-\ell - 1 ) }(p)\}
        \end{equation*}
        By \eqref{eq:long exact sequence 3}, we have 
        \begin{equation}\label{eq: support of F ell}
            \Supp( 
                \cF_{ \ell + 1} 
            )
            \subset
            \Supp( 
                j^{ \ast } K_{ \ell + 1 }
             )
            \subset 
            \{ 
                p, \dots, \sigma^{ -3 (n -\ell - 1 ) }( p )
            \}.
        \end{equation}
        Then
        \begin{equation}\label{eq: support of derived pullback of F ell}
            \Supp(
                \bL^{1} j^{ \ast } j_{ \ast } \cF_{ \ell + 1 }
            )
            \overset{\text{\cref{lemma:support of derived pullback of g-torsion module}}}{=}
            \sigma^{ -3 }
            \Supp( 
                \cF_{ \ell + 1 }
             )
             \overset{\eqref{eq: support of F ell}}{\subset}
             \{
                \sigma^{ -3 }( p )
                ,\dots,
                \sigma^{ -3( n- \ell )} (p)
             \}.
        \end{equation}
        Therefore
        \begin{equation*}
            \Supp(
                j^{\ast } K_{ \ell }
            )
            \overset{\eqref{eq:long exact sequence 3}}{\subset}
            \Supp(
                \bL^{ 1 } j^{ \ast } j_{ \ast } \cF_{ \ell + 1 }
            )
            \cup
            \Supp(
                j^{\ast} K_{ \ell + 1}
            )
            \overset{\eqref{eq: support of F ell}, \eqref{eq: support of derived pullback of F ell}}{\subset}
            \{
                p, \dots, \sigma^{ -3( n -\ell) }
            \}.
        \end{equation*}
        By induction on \(\ell\), the assertion holds.
    \end{proof}
    Therefore we have
    \begin{equation*}
        \{p\}
        \overset{\eqref{eq: support of K ell}}{=}
        \Supp(
            \bL^{ 1 }j^{ \ast } K_{ 1 }
        )
        \overset{\text{\cref{lemma:support of derived pullback of g-torsion module}}}{=}
        \sigma^{ -3 }
        \Supp(
            j^{ \ast } K_{ 1 }
        )
        \overset{\text{\cref{claim}}}{\subset}
        \{
            \sigma^{ -3 }( p ), \dots, \sigma^{ -3 n }( p )
        \}
    \end{equation*}
    and hence, \(p = \sigma^{ 3\ell }( p ) \) for some \(\ell\geq 1\).

    Finally, if \(\sigma^{ 3 }\) is not a torsion element, then \(\sigma^{ 3\ell }\) has no fixed point for any \(\ell\), i.e.,
    \(
        E ^{ \sigma ^{ 3 \ell } } = \emptyset
    \) 
    for any \(\ell\)  (see \cite[{p321}]{Hartshorne}).
    This implies 
    \(
        \bigcup_{ \ell \in\bZ_{>0} }
        E ^{ \sigma ^{ 3 \ell } } = \emptyset
    \)
    and hence, \(E_{ 1 }= \emptyset\).
\end{proof}

Let us define the map 
\begin{equation}\label{eq:degree map}
    \deg
    \colon
    \grothendieckgroup( E )
    \coloneqq 
    \grothendieckgroup( \dbcoh( E ) )
    \to
    \bZ
\end{equation}
which is given by \(\cF\mapsto \chi(\cF)\) where 
\(
    \cF\in\coh( E )
\) 
and
\(
    \chi( F ) 
    = 
    \sum (-1)^i \dim H^{ i }( E, \cF )
\).
It is known that there exists an isomorphism
\begin{equation}\label{eq: isomorphism between Chow group and Grothendieck group} 
    \picardgroup( E )
    \oplus 
    \bZ
    \simeq 
    \grothendieckgroup( E )
\end{equation}
which is defined by the direct sum of 
\(
    \sum n_{ p } p 
    \mapsto 
    \sum n[ \cO_{ p } ]
\) 
for a Weil divisor \(\sum n_{p} p\) on \(E\) and 
\(
    n\mapsto n[ \cO_{ E } ]
\)
for \(n\in\bZ\) (see \cite[{Exercise~I.6.11}]{Hartshorne}).
Note that the map defined in \eqref{eq:degree map} coincides with the map of the degree of a Weil divisor on 
\(
    \picardgroup (E)
\), 
via the map \eqref{eq: isomorphism between Chow group and Grothendieck group}.

For \(\eta\in\grothendieckgroup( E )\), let us define 
\(
    E_{ 2, \eta }\subset E_{ 2 }
\) 
as 
\begin{equation*}
    E_{ 2, \eta }
    \coloneqq
    \{
        p
        \mid
        \Supp( \bL j^{ \ast } M )
        =
        \{ p \}
        \text{ for some }
        M\in\qgr(A)
        \text{ such that }
        \bL^1 j^{ \ast } M = 0,
        [M] = \eta
    \}.
\end{equation*}
If 
\(
    \deg\bL j^{ \ast } \eta \leq 0
\), 
then 
\(
    E_{ 2, \eta } 
    =
    \emptyset
\). 
Indeed, assume 
\(
    E_{ 2, \eta }
    \neq
    \emptyset
\)
and take
\(
    p\in E_{ 2, \eta }
\).
Then there exists \( M \in \grmod( A ) \) such that 
\(
    \Supp(
        \bL j^{ \ast }M 
    )
    =
    \{p\}
\) 
and
\(
    \bL^{ 1 } j^{ \ast } M 
    =
    0
\).
 Since \( j^{\ast}M \) is supported at \(p\), we have 
 \(
    \deg \bL j^{ \ast }\eta 
    =
    \deg [j^{ \ast } M ] 
    =
    \dim H^{ 0 } ( E, j^{\ast} M) > 0.
\)

\begin{lemma}\label{lemma:support of E_2}
    For any 
    \(
        \eta\in\grothendieckgroup( A )
    \) 
    which satisfies 
    \(
        n
        \coloneqq 
        \deg \bL j^{ \ast } \eta >  0
    \), 
    \(
        E_{ 2, \eta }
    \) 
    is finite.
    In particular, 
    \(
        E_{ 2 } = \bigcup_{ \eta \in \grothendieckgroup( A )} E_{ 2,\eta }
    \) 
    is at most countable.
\end{lemma}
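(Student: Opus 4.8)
The plan is to extract from each point of $E_{2,\eta}$ a single rigid linear-equivalence relation on $E$, rigid enough that only finitely many points satisfy it, and then sum over the countably many classes $\eta$. So suppose $p\in E_{2,\eta}$, witnessed by $M\in\qgr(A)$ with $[M]=\eta$, $\bL^{1}j^{\ast}M=0$ and $\Supp(\bL j^{\ast}M)=\{p\}$. First I would use $\bL^{1}j^{\ast}M=0$ to see that $\bL j^{\ast}M\simeq j^{\ast}M$ is an honest coherent sheaf $\cF\in\coh(E)\simeq\qgr(B)$ whose set-theoretic support is the single point $p$; hence $\cF$ is a finite-length sheaf concentrated at $p$. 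Passing to Grothendieck groups gives $[\cF]=\bL j^{\ast}\eta$ in $\grothendieckgroup(E)$, so $\chi(\cF)=\deg\bL j^{\ast}\eta=n$; that is, $\cF$ has length exactly $n$ at $p$.

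Next I would play off the rigidity of the class $[\cF]=\bL j^{\ast}\eta$, which depends only on $\eta$, against its concentration at $p$. Since $E$ is smooth and $\cF$ has length $n$ at $p$, it admits a filtration with $n$ successive quotients isomorphic to the skyscraper $\cO_{p}$, so $[\cF]=n[\cO_{p}]$ in $\grothendieckgroup(E)$; under the explicit isomorphism \eqref{eq: isomorphism between Chow group and Grothendieck group} this class is carried to the line bundle $\cO_{E}(np)\in\picardgroup(E)$. Writing $\cN_{\eta}\in\picardgroup(E)$ for the line bundle corresponding to $\bL j^{\ast}\eta$ under \eqref{eq: isomorphism between Chow group and Grothendieck group} (a line bundle of degree $n=\deg\bL j^{\ast}\eta$, depending only on $\eta$), we conclude that every $p\in E_{2,\eta}$ satisfies $\cO_{E}(np)\simeq\cN_{\eta}$ in $\picardgroup(E)$.

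Finally I would read this as a fiber condition for an isogeny: via the Abel--Jacobi isomorphism $E\xrightarrow{\sim}\picardgroup^{0}(E)$ (after fixing $o\in E$) and a fixed translation, the assignment $p\mapsto\cO_{E}(np)$ becomes the multiplication-by-$n$ map of the elliptic curve $\picardgroup^{0}(E)$, so the relation $\cO_{E}(np)\simeq\cN_{\eta}$ places $p$ in a single fiber of this map. Since $n>0$, multiplication by $n$ is a nonzero isogeny, hence a finite morphism (of degree $n^{2}$, the characteristic being zero), and therefore $E_{2,\eta}$ has at most $n^{2}$ elements. Because $\grothendieckgroup(A)\simeq\bZ^{3}$ is countable by \eqref{eq:grothendieckgroup of A}, and $E_{2,\eta}=\emptyset$ whenever $\deg\bL j^{\ast}\eta\le 0$ as observed before the statement, the set $E_{2}=\bigcup_{\eta}E_{2,\eta}$ is a countable union of finite sets, hence at most countable.

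The one place where care is needed is the very first step: the hypothesis $\bL^{1}j^{\ast}M=0$ must be used so that $\bL j^{\ast}\eta$ is literally the class of a sheaf supported at $p$, forcing $[\cF]=n[\cO_{p}]$; this is exactly the reason for the splitting $E^{\mathrm{sp}}=E_{1}\cup E_{2}$ in \cref{lemma:decomposition of E circ}, since in the $E_{1}$ regime $\bL j^{\ast}M$ is a genuine two-term complex whose $\picardgroup(E)$-component need not be $\cO_{E}(np)$. After that, everything is formal, resting only on the explicit description of \eqref{eq: isomorphism between Chow group and Grothendieck group} and on the finiteness of the fibers of a nonzero isogeny of elliptic curves.
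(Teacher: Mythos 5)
Your proposal is correct and follows essentially the same route as the paper: both arguments hinge on the computation that $[\bL j^{\ast}M]=n[\cO_{p}]$ depends only on $\eta$, and then on the finiteness of the $n$-torsion of $\picardgroup^{0}(E)$ (the paper injects $E_{2,\eta}$ into $E[n]$ by fixing a base point $q$ and sending $p\mapsto p-q$, while you identify $E_{2,\eta}$ with part of a fiber of the multiplication-by-$n$ isogeny --- the same fact in different packaging). The countability conclusion via $\grothendieckgroup(A)\simeq\bZ^{3}$ is handled identically.
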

\begin{proof}
    Let \(p,q\in E_{2, \eta}\). Then there exist \(M, N\in \grmod( A )\) which satisfy the following conditions:
    \begin{equation*}
        \bL^1 j^{ \ast } M
        = 
        \bL^1 j^{ \ast } N
        = 
        0
        \text{ and }
        \Supp ( \bL j^{ \ast } M ) = \{ p \},\ 
        \Supp ( \bL j^{ \ast } N ) = \{ q \}.
    \end{equation*} 
    Since \( j^{ \ast } M \) and \( j^{ \ast } N \) are supported at \(p\) and \(q\) respectively, 
    we have the following equation of \(K\)-classes on \(E\)
    \begin{equation*}
            [\bL j^{ \ast } M]
            = 
            [j^{ \ast } M]
            =
            n[\cO_{p}],\ 
            [\bL j^{ \ast } N]
            =
            [j^{ \ast } N]
            =
            n[\cO_{q}]
    \end{equation*} 
    where 
    \(
        n 
        = 
        \deg \bL j^{ \ast } \eta
        > 
        0
    \).
    Since 
    \(
        \eta 
        = 
        [ M ] 
        = 
        [ N ]
    \), 
    then 
    \(
        \bL j^{ \ast }[ M ]  
        = 
        \bL j^{ \ast } [ N ]
    \). 
    i.e., 
    \begin{equation*}
        n
        \left(
            [ \cO_{ p } ]
            -
            [ \cO_{ q } ]
        \right)
        =
        0.
    \end{equation*}
    By \eqref{eq: isomorphism between Chow group and Grothendieck group}, 
    the divisor \( p - q\in \picardgroup( E ) \) is annihilated by \(n\).
    Therefore, we can define the map
    \begin{equation*}
        E_{ 2, \eta }
        \to
        E[ n ],\ p\mapsto p - q 
    \end{equation*}
    for a fixed point 
    \(
        q \in E_{ 2, \eta }
    \) 
    and this map is injective.
    Hence,
    \(
        E_{ 2, \eta } 
    \) 
    is a finite set
    since \( E [ n ] \) is a finite set. 
    Finally, since
    \(
        \grothendieckgroup( A )\simeq \bZ^{ 3 }
    \)
    by \eqref{eq:grothendieckgroup of A}, 
    the set 
    \(
        E_{ 2 } 
        = 
        \bigcup_{ \eta\in \grothendieckgroup( A ) }
        E_{ 2, \eta }
    \) 
    is at most countable.
\end{proof}

\begin{proposition}\label{proposition:support of E circ}
    If the automorphism \( \sigma \) has infinite order, 
    then the set 
    \( 
        E^{\mathrm{sp}}
    \)
    is at most countable. As a consequence, we have 
    \(
        E
        \setminus 
        E^{\mathrm{sp}}
        \neq 
        \emptyset
    \).
\end{proposition}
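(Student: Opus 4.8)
The plan is to read the proposition off from the decomposition \(E^{\mathrm{sp}} = E_{1} \cup E_{2}\) established in \cref{lemma:decomposition of E circ}, since \cref{lemma:support of E_1} and \cref{lemma:support of E_2} already control the two pieces separately; essentially no new argument is needed, only bookkeeping.

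First I would note that the hypothesis that \(\sigma\) has infinite order forces \(\sigma^{3}\) to have infinite order as well: if \(\sigma^{3}\) had finite order \(m\), then \(\sigma^{3m} = \id_{E}\), so \(\sigma\) would have order dividing \(3m\), a contradiction. Hence the final clause of \cref{lemma:support of E_1} applies and gives \(E_{1} = \emptyset\). On the other hand \cref{lemma:support of E_2} states exactly that \(E_{2} = \bigcup_{\eta \in \grothendieckgroup(A)} E_{2,\eta}\) is at most countable: the index set \(\grothendieckgroup(A) \simeq \bZ^{3}\) is countable by \eqref{eq:grothendieckgroup of A}, and each \(E_{2,\eta}\) is finite (being empty unless \(\deg \bL j^{\ast}\eta > 0\), and otherwise injecting into the finite group \(E[n]\) with \(n = \deg \bL j^{\ast}\eta\)). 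Combining the two, \(E^{\mathrm{sp}} = E_{1} \cup E_{2} = E_{2}\) is at most countable.

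For the concluding assertion I would simply observe that the set of closed points of the elliptic curve \(E\) is uncountable, so it cannot be exhausted by the countable set \(E^{\mathrm{sp}}\), whence \(E \setminus E^{\mathrm{sp}} \neq \emptyset\). There is no serious obstacle in this proposition itself: the genuine work has already been carried out in \cref{lemma:support of E_1} (the descending induction along the \(g\)-torsion filtration \(K_{1}\subset\cdots\subset K_{n}\)) and in \cref{lemma:support of E_2} (the injection of \(E_{2,\eta}\) into the \(n\)-torsion subgroup \(E[n]\)). The only two points that call for a line of justification are the transfer of the infinite-order condition from \(\sigma\) to \(\sigma^{3}\) — which is what makes \(E_{1}\) vanish — and the final counting step, which implicitly uses that the algebraically closed base field, and hence \(E\), has uncountably many elements.
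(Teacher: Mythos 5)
Your proof is correct and is essentially identical to the paper's, which simply cites \cref{lemma:decomposition of E circ}, \cref{lemma:support of E_1}, and \cref{lemma:support of E_2} without further comment. You are in fact slightly more careful than the paper in spelling out the two implicit points (that $\sigma$ of infinite order forces $\sigma^{3}$ to have infinite order, and that the final step needs the base field — hence $E$ — to be uncountable, which holds for $\bC$ but not for every algebraically closed field of characteristic zero).
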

\begin{proof}
    It follows from \cref{lemma:decomposition of E circ}, \cref{lemma:support of E_1}, and \cref{lemma:support of E_2}.
\end{proof}

\subsection{Proof of \cref{Main result}}\label{subsec:proof of main result}
In this section, we prove the main theorem using a noncommutative analogue of \cite[Theorem~3.1]{borisov2024nonexistencephantomsnongenericblowups}, 
which addresses the commutative case.

Let \( A \) be a three-dimensional quadratic AS-regular algebra and let 
\(
    ( E, \sigma, \cL)
\) 
be a geometric triple associated to \( A \) described in \cref{theorem: AS quads are isom to geometric algebra}.
We assume that 
\( 
    E 
\) 
is an elliptic curve, and the automorphism 
\(
    \sigma
\) 
is a translation with infinite order.

\begin{theorem}\label{Main theorem}
    Let 
    \(
        \boundedderived \qgr(A)
        =
        \langle
            \cA,\cB
        \rangle
    \)
    be a semi-orthogonal decomposition with \(K_0(\cB)=0\). Then \(\cB\) is trivial.
    In particular, there are no phantom categories on noncommutative projective planes with infinite order translation.
\end{theorem}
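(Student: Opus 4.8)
The plan is to carry out a noncommutative version of the three-step argument of Borisov--Kemboi, with the spherical functor $\bL j^{\ast}$ replacing the restriction to the anticanonical curve and with \cref{proposition:skyscraper sheaves on E is a spanning class}, \cref{proposition:support of E circ}, and \cref{lemma:support has non trivial intersection with elliptic curve} as the main technical inputs. Throughout we use the semi-orthogonal decomposition $\boundedderived\qgr(A)=\langle\cA,\cB\rangle$ with $\grothendieckgroup(\cB)=0$.

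\textbf{Step 1: some skyscraper lands in $\cA$.} Since $E$ is a smooth elliptic curve and $\sigma$ is a translation, $\bL j^{\ast}$ is spherical with cotwist $M\mapsto M(-3)[1]$, which agrees with the Serre functor up to shift; hence by \cref{corollary: an exact triangle associated to the derived pullback which is a spherical functor} the composition $\cB\hookrightarrow\boundedderived\qgr(A)\xrightarrow{\bL j^{\ast}}\dbcoh(E)$ is spherical, with twist $T$, and for every $p\in E$ there is an exact triangle
\[
\bL j^{\ast}B_{p}\to\cO_{p}\to T(\cO_{p})\to\bL j^{\ast}B_{p}[1],\qquad B_{p}\coloneqq\pr^{R}_{\cB}(j_{\ast}\cO_{p})\in\cB .
\]
As $\grothendieckgroup(\cB)=0$ we get $[B_{p}]=0$ and hence $[T(\cO_{p})]=[\cO_{p}]$ in $\grothendieckgroup(E)$. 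The object $\cO_{p}$ is spherical on $E$ and $T$ is an autoequivalence, so $T(\cO_{p})$ is spherical; by the classification of spherical objects on an irreducible projective curve of arithmetic genus one \cite[{Proposition~4.13}]{BK2006} together with $[T(\cO_{p})]=[\cO_{p}]$, we obtain $T(\cO_{p})\simeq\cO_{p}[2a]$ for some $a\in\bZ$, so $\Supp(\bL j^{\ast}B_{p})\subseteq\{p\}$. By \cref{proposition:support of E circ} the set $E^{\mathrm{sp}}$ is at most countable, so we may choose $p\notin E^{\mathrm{sp}}$; for such $p$ the support of $\bL j^{\ast}B_{p}$ cannot be $\{p\}$, hence $\bL j^{\ast}B_{p}=0$, and \cref{lemma:support has non trivial intersection with elliptic curve} gives $B_{p}=0$, i.e. $j_{\ast}\cO_{p}\simeq A_{p}\in\cA$.

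\textbf{Step 2: all skyscrapers land in $\cA$.} By Step 1 the autoequivalence $T$ of $\dbcoh(E)$ sends a skyscraper sheaf to a shifted skyscraper sheaf, so by \cite[{Corollary~4.3}]{hille2005fouriermukaitransforms} it has the form $\rho_{\ast}(-\otimes\cL_{0})[n]$ for some $\rho\in\Aut(E)$, some invertible sheaf $\cL_{0}$, and $n\in\bZ$; Step 1 forces $n=0$ and $\rho=\id_{E}$. Applying \cref{corollary: an exact triangle associated to the derived pullback which is a spherical functor} to $F=\cO_{E}$ and comparing $K$-classes gives $[\cL_{0}]=[\cO_{E}]$, so $\cL_{0}\simeq\cO_{E}$ and $T\simeq\id_{\dbcoh(E)}$. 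Therefore the morphism of kernels on $E\times E$ corresponding to the unit $\id\to T$ is a scalar multiple of $\id_{\cO_{\Delta}}$; by Step 1 it is nonzero on some $\cO_{p}$, hence it is an isomorphism, so $\bL j^{\ast}B_{p}=0$ and therefore $B_{p}=0$ by \cref{lemma:support has non trivial intersection with elliptic curve}, for every $p\in E$. Thus $j_{\ast}\cO_{p}\in\cA$ for all $p\in E$.

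\textbf{Step 3: conclusion and the main obstacle.} For any $B\in\cB$, semiorthogonality of $\langle\cA,\cB\rangle$ and Step 2 give $\RHom(B,j_{\ast}\cO_{p})=0$ for every $p\in E$; since $\{j_{\ast}\cO_{p}\}_{p\in E}$ is a spanning class of $\boundedderived\qgr(A)$ by \cref{proposition:skyscraper sheaves on E is a spanning class}, we conclude $B=0$, so $\cB$ is trivial, and in particular $\boundedderived\qgr(A)$ admits no admissible subcategory with vanishing Grothendieck group, hence no phantom categories. The main obstacle is Step 2: one has to pin the spherical twist $T$ down to the identity functor. This combines the genus-one classification of spherical objects, the structure theorem for Fourier--Mukai equivalences preserving skyscraper sheaves, and a Grothendieck group computation on $E$; the delicate point is that a priori $T$ could differ from $\id$ by a translation of $E$, a twist by a line bundle, or a shift, and each possibility must be ruled out using the conclusion of Step 1.
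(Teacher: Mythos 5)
Your proposal is correct and follows essentially the same three-step argument as the paper: the same spherical-twist triangle, the same use of \cite[Proposition~4.13]{BK2006} and \cite[Corollary~4.3]{hille2005fouriermukaitransforms} to pin $T$ down to the identity via a point outside $E^{\mathrm{sp}}$, and the same conclusion via the spanning class of skyscraper sheaves.
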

\begin{proof}
    By \cref{proposition:skyscraper sheaves on E is a spanning class}, it is enough to show that 
    \(
        j_{ \ast } \cO_{ p }\in \cA
    \) 
    for any \( p \in E\).
    Consider the decomposition of \( j_{ \ast } \cO_{ p }\)
    \begin{equation}
        B_{ p }
        \to
        j_{ \ast } \cO_{ p }
        \to
        A_{ p }
        \to
        B_{ p }[1]
    \end{equation}
    where \(A_{ p }\in \cA\) and \(B_{ p }\in \cB\).
    Then we only have to prove that \(\bL j^{ \ast } B_{ p } = 0\) for any \(p\in E\) 
    since \(B_{ p } = 0\) if and only if \(\bL j^{ \ast } B_{ p } = 0\) by \cref{lemma:support has non trivial intersection with elliptic curve}.
    
    By \cref{corollary: an exact triangle associated to the derived pullback which is a spherical functor},
    there exists an exact triangle on \(E\)
    \begin{equation}\label{eq:exact tri ass to sphericl functor}
        \bL j^{ \ast } B_{ p }
        \to
        \cO_{ p }
        \to
        C_{ p } 
        \coloneqq
        T( \cO_{ p } )
        \to
        \bL j^ { \ast } B_{ p }[ 1 ]
    \end{equation}
    where \(T\) is a spherical twist associated to 
    \(
        \cB
        \hookrightarrow 
        \derived^ {b}\qgr ( A ) 
        \overset{\bL j^{ \ast }}{\to} 
        \dbcoh ( E ) 
    \).
    Since \(\cO_{ p }\) is a spherical object, then so is \( C_{ p } \). 
    Moreover, since the \(K\)-class of 
    \(
        \bL j^{ \ast }B_{ p }
    \) 
    vanishes by our assumption for \(\cB\), we have 
    \begin{equation}\label{eq: Cp eqaulas to Op as K-class}
        [ \cO_{ p } ] 
        = 
        [ C_{ p } ]
    \end{equation}
    in \( \grothendieckgroup( E ) \).
    It is known that any spherical object on \(E\) is isomorphic to either a simple vector bundle or a skyscraper sheaf up to shift by \cite[Proposition~4.13]{BK2006}.
    Hence the equation \eqref{eq: Cp eqaulas to Op as K-class} implies that for any \(p\in E\), there exists an isomorphism 
    \begin{equation}\label{eq:of the form of Cp}
        C_ { p }
        \simeq 
        \cO_{ p }[ 2 a_{ p } ]
    \end{equation}
    for some \( a_{ p }\in \bZ \).
    Since \(T\) sends skyscraper sheaves to skyscraper sheaves up to shift, the autoequivalence \(T\) is of the form 
    \begin{equation}
        T
        \simeq 
        \rho_{ \ast }
        ( - \otimes \cL )[ n ]
    \end{equation}
    for some \(\rho\in\Aut ( E )\), \(\cL\in\picardgroup( E )\), and \(n\in \bZ\) by \cite[Corollary~4.3]{hille2005fouriermukaitransforms}.
    Then we claim that \( \rho = \id_{E}\), \(n = 0\), and \(\cL\simeq \cO_{ E }\).
    Indeed, by \eqref{eq:of the form of Cp}, we have \(\rho = \id_{E}\). 
    By \eqref{eq:exact tri ass to sphericl functor}, we have 
    \(
        \Supp ( \bL j^{ \ast }B_{ p } ) 
        \subset 
        \{ p\}
    \).
    By \cref{proposition:support of E circ}, there exists a point \(p\notin E^{\mathrm{sp}}\).
    If 
    \(
        p 
        \notin 
        E^{\mathrm{sp}}
    \), 
    then 
    \(
        \bL j^{ \ast } B_{ p } = 0
    \). 
    Moreover, by \eqref{eq:exact tri ass to sphericl functor} we have 
    \begin{equation}\label{eq:isom on E circ}
        \cO_{ p }
        \simeq 
        C_{ p }
    \end{equation}
    for 
    \(
        p
        \notin 
        E^{\mathrm{sp}}
    \).
    This implies \(n = 0\). 
    Finally, consider the exact triangle obtained by \eqref{eq:exact triangle associated to spherical functor}
    \begin{equation*}
        \bL j^{ \ast } B_{ \cO }
        \to
        \cO_{ E }
        \to
        \cL
        \to
        \bL j^{ \ast } B_{ \cO }[ 1 ]
    \end{equation*}
    where 
    \( 
        B_{ \cO } 
        = 
        \pr^{ R }_{ \cB } ( j_{ \ast }\cO_{ E } )
    \).
    The equation of the \( K \)-classes
    \begin{equation*}
        [ \cO_{ E } ]
        =
        [ \cL ]
    \end{equation*}
    implies that 
    \(
        \cO_{ E }\simeq \cL
    \).

    Therefore, the morphism of kernels on 
    \( 
        E \times E
    \) 
    that corresponds to 
    \(
        \id
        \to
        T
    \)
    is given by some element 
    \(
        \psi
    \)
    of 
    \(
        \Hom_{ E\times E } ( \cO_{ \Delta }, \cO_{ \Delta } )
    \).
    Since the morphism space is one-dimensional, \(\psi\) is either an isomorphism or a trivial morphism.
    By \eqref{eq:isom on E circ}, \( \psi \) is not trivial and hence it is an isomorphism.
    As a consequence, we have \(\bL j^{ \ast } B_{ p } = 0\) for any \(p\in E\) and the assertion holds.
\end{proof}
%
%
\appendix
\section{Proof of \cref{corollary: an exact triangle associated to the derived pullback which is a spherical functor}}\label{section: proof of corollary}

In this section we prove \cref{corollary: an exact triangle associated to the derived pullback which is a spherical functor}.
Let \(Q\) be a finite acyclic quiver and \(\Lambda = \bfk Q / I\), where \(I\) is a two-sided ideal of the path algebra \(\bfk Q\).

Since \(Q\) is finite and acyclic, the algebra \(\Lambda\) has finite global dimension.
Moreover, the enveloping algebra \(\Lambda^e= \Lambda^{\op}\otimes_{\bfk}\Lambda\) has finite global dimension. Indeed, \(\Lambda^e\) is a quotient algebra of the path algebra \(\bfk(Q^{\op}\times Q)\), where \(Q^{\op}\) is the opposite quiver of \(Q\) and \(Q^{\op}\times Q\) is the product of the quivers (see \cite[{Proposition~3}]{MR2357345}). 
As \(Q^{\op}\times Q\) is finite and acyclic, the enveloping algebra \(\Lambda^{e}\) has finite global dimension as well.

\begin{lemma}\label{lemma: adjunction associaed to bimodule}
    Let \(\Lambda\) be a finite dimensional algebra and let \(\Lambda^{ e }=\Lambda^{\op}\otimes_{\bfk}\Lambda\) be an enveloping algebra. For a right \(\Lambda\)-module \(W\), there exists the following adjunction
    \begin{equation*}
        _{\Lambda}(-)\otimes_{\bfk} W_{ \Lambda }
        \colon 
        \module \Lambda^{\op}
        \rightleftarrows
        \module \Lambda^e
        \colon 
        _{ \Lambda }\Hom_{\module \Lambda}( W_{ \Lambda }, (-) ),
    \end{equation*}
    where a left module structure of \(_{ \Lambda }\Hom_{\module \Lambda}( W_{ \Lambda }, M)\) for a bimodule \(M\) is inherited from the left module structure of \(M\).
    In particular, we have an adjunction
    \begin{equation*}
        _{\Lambda}(-)\Lotimes_{\bfk} W_{ \Lambda } 
        =
        _{\Lambda}(-)\otimes_{\bfk} W_{ \Lambda }
        \colon 
        \boundedderived\module \Lambda^{\op}
        \rightleftarrows
        \boundedderived\module \Lambda^e
        \colon 
        _{ \Lambda }\RHom_{\module \Lambda}( W_{ \Lambda }, (-) ).
    \end{equation*}
\end{lemma}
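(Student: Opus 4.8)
The plan is to recognize the claimed adjunction as the tensor--hom adjunction performed $\Lambda$-equivariantly on the left-hand factor, and then to upgrade it to the bounded derived level using that $\otimes_{\bfk}$ is exact. First I would fix notation: a right $\Lambda^{\op}$-module is the same as a left $\Lambda$-module, and a $\Lambda^e$-module is the same as a $\Lambda$-bimodule. I would then check that the two functors are well defined. For a left $\Lambda$-module $V$, the $\bfk$-space $V \otimes_{\bfk} W$ becomes a $\Lambda$-bimodule via $\lambda\cdot(v\otimes w)=(\lambda v)\otimes w$ and $(v\otimes w)\cdot\lambda=v\otimes(w\lambda)$, giving a functor $\module\Lambda^{\op}\to\module\Lambda^e$; for a bimodule $M$, the $\bfk$-space $\Hom_{\module\Lambda}(W,M)$, formed from the right $\Lambda$-structures on $W$ and $M$, becomes a left $\Lambda$-module via $(\lambda\cdot f)(w)=\lambda\cdot f(w)$, using the left $\Lambda$-structure on $M$, giving a functor $\module\Lambda^e\to\module\Lambda^{\op}$.

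Next I would produce the natural isomorphism directly. Given a bimodule homomorphism $\phi\colon V\otimes_{\bfk}W\to M$, I would set $\psi(v)(w)=\phi(v\otimes w)$; right $\Lambda$-linearity of $\phi$ makes each $\psi(v)$ a morphism in $\module\Lambda$, while left $\Lambda$-linearity of $\phi$ --- together with the definition of the left action on the Hom-space --- makes $\psi$ a morphism of left $\Lambda$-modules. The inverse assignment sends $\psi$ to the map $v\otimes w\mapsto\psi(v)(w)$, which is well defined by the universal property of $\otimes_{\bfk}$; the two assignments are mutually inverse and natural in $V$ and $M$, establishing the underived adjunction.

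Finally I would pass to the derived level. Since $\bfk$ is a field, every $\bfk$-module is flat, so $(-)\otimes_{\bfk}W$ is exact and already computes its own derived functor; thus $(-)\Lotimes_{\bfk}W=(-)\otimes_{\bfk}W$, and its derived right adjoint is $\RHom_{\module\Lambda}(W,-)$, so the adjunction lifts to the (unbounded) derived categories. To see it restricts to $\boundedderived\module\Lambda^{\op}\rightleftarrows\boundedderived\module\Lambda^e$, I would note that tensoring a bounded complex of finite-dimensional modules with $W$ over $\bfk$ stays bounded with finite-dimensional cohomology, and that $\RHom_{\module\Lambda}(W,-)$ is computed using a finite projective resolution of $W$ over $\Lambda$, which exists because $\gldim\Lambda<\infty$ (and similarly $\gldim\Lambda^e<\infty$), as observed just before the lemma. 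The argument is essentially formal; the only points demanding care are the bookkeeping of the four $\Lambda$-actions when verifying naturality --- in particular that the left $\Lambda$-structure $\Hom_{\module\Lambda}(W,M)$ inherits from $M$ is precisely the one making the adjunction work --- and checking that the passage to bounded derived categories is legitimate, which is where the finiteness of $\gldim\Lambda$ and $\gldim\Lambda^e$ enters.
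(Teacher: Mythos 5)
Your proof is correct: it is the standard tensor--hom adjunction, checked $\Lambda$-equivariantly and then lifted to the derived level using exactness of $\otimes_{\bfk}$ over a field and finiteness of $\gldim\Lambda$ to stay within bounded derived categories. The paper gives no proof of this lemma at all (it is treated as standard), and your argument is exactly the one implicitly intended, so there is nothing to compare beyond noting that you have supplied the omitted details.
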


\begin{lemma}\label{lemma: semi-orthogonal decomposition of derived category of a enveloping algebra}
    Let
    \( 
        \boundedderived \module \Lambda
        =
        \langle 
            \cA, \cB
        \rangle
    \) 
    be a semi-orthogonal decomposition.
    Then there exists a semi-orthogonal decomposition 
    \begin{equation*}
        \boundedderived\module \Lambda^{ e }
        =
        \langle 
        \boundedderived\module \Lambda^{ \op } \otimes_{ \bfk } \cA, 
        \boundedderived\module \Lambda^{ \op } \otimes_{ \bfk } \cB 
        \rangle
    \end{equation*} 
    where 
    \( 
        \Lambda^{ e } = \Lambda^{ \op } \otimes_{ \bfk } \Lambda
    \)
    is the enveloping algebra.
\end{lemma}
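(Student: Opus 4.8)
The plan is to check the two defining properties of a semi-orthogonal decomposition directly for the pair $(\boundedderived\module\Lambda^{\op}\otimes_{\bfk}\cA,\ \boundedderived\module\Lambda^{\op}\otimes_{\bfk}\cB)$, where I read $\boundedderived\module\Lambda^{\op}\otimes_{\bfk}\cA$ as the triangulated subcategory of $\boundedderived\module\Lambda^{e}$ generated by the external products $V\otimes_{\bfk}X$ with $V\in\boundedderived\module\Lambda^{\op}$ and $X\in\cA$, and similarly for $\cB$; each such $V\otimes_{\bfk}X$ is a bounded complex of $\Lambda$-bimodules with finite-dimensional cohomology, hence an object of $\boundedderived\module\Lambda^{e}$. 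The two ingredients will be a Künneth-type formula for $\RHom_{\Lambda^{e}}$ of external products and the description of the indecomposable projective $\Lambda^{e}$-modules.

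First I would prove that for $V,V'\in\boundedderived\module\Lambda^{\op}$ and $X,X'\in\boundedderived\module\Lambda$ there is a natural isomorphism $\RHom_{\Lambda^{e}}(V\otimes_{\bfk}X,\ V'\otimes_{\bfk}X')\simeq\RHom_{\Lambda^{\op}}(V,V')\otimes_{\bfk}\RHom_{\Lambda}(X,X')$. By the derived adjunction of \cref{lemma: adjunction associaed to bimodule} (applied with $W=X$; over the field $\bfk$ the functor $(-)\otimes_{\bfk}X$ is already exact, and the adjunction extends from modules to bounded complexes in the usual way) one has $\RHom_{\Lambda^{e}}(V\otimes_{\bfk}X,\ N)\simeq\RHom_{\Lambda^{\op}}(V,\ \RHom_{\module\Lambda}(X,N))$. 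Taking $N=V'\otimes_{\bfk}X'$ and using that $V'$ carries no right $\Lambda$-action, so that $\RHom_{\module\Lambda}(X,\ V'\otimes_{\bfk}X')\simeq V'\otimes_{\bfk}\RHom_{\Lambda}(X,X')$ as a complex of $\Lambda^{\op}$-modules, together with finite-dimensionality of all the cohomologies in sight (which lets $\otimes_{\bfk}$ pass through $\RHom$), one obtains the formula.

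Semi-orthogonality is then immediate: for $Y\in\cB$ and $X\in\cA$ the Künneth formula gives $\RHom_{\Lambda^{e}}(V\otimes_{\bfk}Y,\ V'\otimes_{\bfk}X)\simeq\RHom_{\Lambda^{\op}}(V,V')\otimes_{\bfk}\RHom_{\Lambda}(Y,X)=0$, since $\RHom_{\Lambda}(Y,X)=0$ by the decomposition $\boundedderived\module\Lambda=\langle\cA,\cB\rangle$, and this propagates to all of $\boundedderived\module\Lambda^{\op}\otimes_{\bfk}\cB$ and $\boundedderived\module\Lambda^{\op}\otimes_{\bfk}\cA$. For the remaining axiom I would recall that, $\Lambda$ being a finite-dimensional algebra of finite global dimension over the algebraically closed field $\bfk$, the ring $\Lambda^{e}/\operatorname{rad}\Lambda^{e}\simeq(\Lambda^{\op}/\operatorname{rad}\Lambda^{\op})\otimes_{\bfk}(\Lambda/\operatorname{rad}\Lambda)$ is semisimple, so the indecomposable projective $\Lambda^{e}$-modules are exactly the external products $P\otimes_{\bfk}Q$ of indecomposable projective $\Lambda^{\op}$- and $\Lambda$-modules, and these generate $\boundedderived\module\Lambda^{e}$ (which is generated by the free module $\Lambda^{e}$, as $\Lambda^{e}$ has finite global dimension). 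Decomposing $Q$ in a triangle $q_{\cB}\to Q\to q_{\cA}\to q_{\cB}[1]$ with $q_{\cB}\in\cB$ and $q_{\cA}\in\cA$, and applying the exact functor $P\otimes_{\bfk}(-)\colon\boundedderived\module\Lambda\to\boundedderived\module\Lambda^{e}$, yields a triangle $P\otimes_{\bfk}q_{\cB}\to P\otimes_{\bfk}Q\to P\otimes_{\bfk}q_{\cA}\to P\otimes_{\bfk}q_{\cB}[1]$ whose outer terms lie in $\boundedderived\module\Lambda^{\op}\otimes_{\bfk}\cB$ and $\boundedderived\module\Lambda^{\op}\otimes_{\bfk}\cA$ respectively. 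So every generator of $\boundedderived\module\Lambda^{e}$ admits such a decomposition triangle; since, given the $\RHom$-vanishing just established and the idempotent-completeness of $\boundedderived\module\Lambda^{e}$, the objects admitting one form a triangulated subcategory closed under direct summands (the standard octahedral argument), they exhaust $\boundedderived\module\Lambda^{e}$. This establishes the semi-orthogonal decomposition, hence also the admissibility of the two summands.

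The one point requiring genuine care is the Künneth isomorphism of the second step: one must apply \cref{lemma: adjunction associaed to bimodule} correctly at the derived level and keep careful track of the finiteness hypotheses — finite-dimensionality of $\Lambda$, and finite global dimension of $\Lambda$, $\Lambda^{\op}$ and $\Lambda^{e}$ — which are precisely what make $\otimes_{\bfk}$ commute with $\RHom$ and keep all the objects inside the bounded derived categories. Once that is in place, the semi-orthogonality, the generation statement, and the existence of the decomposition triangles are purely formal.
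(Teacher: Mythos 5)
Your proposal is correct and follows essentially the same route as the paper: semi-orthogonality via the Künneth-type computation obtained from \cref{lemma: adjunction associaed to bimodule}, and generation via the fact that the indecomposable projective $\Lambda^{e}$-modules are external products of indecomposable projectives (you derive this from semisimplicity of $\Lambda^{e}/\operatorname{rad}\Lambda^{e}$ rather than from the product-quiver presentation, and you spell out the decomposition-triangle argument that the paper leaves implicit, but these are only expository differences).
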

\begin{proof}
    Since \(\Lambda^{e}\) has finite global dimension, the derived category \(\boundedderived \module \Lambda^e\) is the smallest triangulated category containing projective modules over \(\Lambda^e\).
    Since \(\Lambda^e\) is the quotient of the path algebra \(\bfk (Q^{\op}\times Q)\), any indecomposable projective module is of the form
    \begin{equation*}
        P_{( i,j )}
        = 
        \Lambda e_{ j }\otimes_{ \bfk } e_{ i }\Lambda
        \in
        \module \Lambda^{ \op } \otimes_{\bfk} \module \Lambda
    \end{equation*}
    where \(i,j\) are vertices of \(Q\). 
    Therefore, we have 
    \begin{equation*}
        \boundedderived \module \Lambda^e 
        = 
        \langle
            \module \Lambda^{ \op } \otimes_{\bfk} \module \Lambda
        \rangle
    \end{equation*}
    and this shows that 
    \(  
        \boundedderived\module \Lambda^{ \op } \otimes_{ \bfk } \cA
    \)
    and 
    \(
        \boundedderived\module \Lambda^{ \op } \otimes_{ \bfk } \cB 
    \)
    generate \(\boundedderived \module \Lambda^{ e }\) as a triangulated category.
    By the adjunction \cref{lemma: adjunction associaed to bimodule}, for \(M, N\in\module\Lambda^{\op}\), \(A\in\cA\) and \(B\in\cB\), we have 
    \begin{align*}
        \RHom_{ \Lambda^e }( M\otimes_{ \bfk } B, N\otimes_{\bfk} A)
        &
        =
        \RHom_{\module \Lambda^{\op} }( M, \RHom_{\module\Lambda}(B, N\otimes_{ \bfk } A) )
        \\
        &=
        \RHom_{\module \Lambda^{\op} }( M, N\otimes _{ \bfk }\RHom_{\module\Lambda}(B, A) )
        \\
        &=
        \RHom_{\module \Lambda^{\op} }( M, N ) \otimes _{ \bfk }\RHom_{\module\Lambda}(B,A)
        \\
        &= 0.
    \end{align*}
    This implies the semi-orthogonality.
\end{proof}

\begin{lemma}\label{lemma: admissible subcategory of finite dimensional algebra has dg enhancement}
    Let 
    \( 
        \cB 
    \) 
    be an admissible subcategory of the 
    \( 
        \boundedderived \module \Lambda 
    \).
    Then the right and the left projections onto
    \( 
        \cB 
    \) 
    have dg enhancements.
\end{lemma}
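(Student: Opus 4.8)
The plan is to realize both the right and the left projection onto $\cB$ as ``kernel functors'' of the form $(-)\Lotimes_{\Lambda}(-)$ with a fixed bounded complex of $\Lambda$-bimodules as kernel, exactly as Fourier--Mukai kernels implement the projections attached to a semi-orthogonal decomposition of $\dbcoh(X)$: the role of $\dbcoh(X\times Y)$ is played by $\boundedderived\module\Lambda^{e}$ and the role of the structure sheaf of the diagonal by the diagonal bimodule $\Lambda\in\boundedderived\module\Lambda^{e}$. Since $\Lambda$ and $\Lambda^{e}$ have finite global dimension, all the derived tensor products below are automatically bounded.

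First I would use admissibility of $\cB$ to record the two semi-orthogonal decompositions $\boundedderived\module\Lambda=\langle\cB^{\perp},\cB\rangle$ and $\boundedderived\module\Lambda=\langle\cB,{}^{\perp}\cB\rangle$, from which the right, resp.\ the left, projection onto $\cB$ is read off. Applying \cref{lemma: semi-orthogonal decomposition of derived category of a enveloping algebra} to each yields semi-orthogonal decompositions of $\boundedderived\module\Lambda^{e}$, and decomposing the diagonal bimodule with respect to them gives exact triangles
\begin{equation*}
    K_{\cB}\to\Lambda\to K\to K_{\cB}[1]
    \qquad\text{and}\qquad
    L\to\Lambda\to L_{\cB}\to L[1]
\end{equation*}
in $\boundedderived\module\Lambda^{e}$ with $K_{\cB},L_{\cB}\in\boundedderived\module\Lambda^{\op}\otimes_{\bfk}\cB$, $K\in\boundedderived\module\Lambda^{\op}\otimes_{\bfk}\cB^{\perp}$ and $L\in\boundedderived\module\Lambda^{\op}\otimes_{\bfk}{}^{\perp}\cB$.

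The key step is to identify $(-)\Lotimes_{\Lambda}K_{\cB}$ and $(-)\Lotimes_{\Lambda}L_{\cB}$ with the two projections. For a generator $M\otimes_{\bfk}B$ of $\boundedderived\module\Lambda^{\op}\otimes_{\bfk}\cB$, with $M$ a finite-dimensional left $\Lambda$-module and $B\in\cB$, there is a natural isomorphism $N\Lotimes_{\Lambda}(M\otimes_{\bfk}B)\simeq(N\Lotimes_{\Lambda}M)\otimes_{\bfk}B$; since $N\Lotimes_{\Lambda}M$ is a bounded complex of finite-dimensional vector spaces, the right-hand side is a finite direct sum of shifts of $B$ and so lies in $\cB$. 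As $\cB$, $\cB^{\perp}$ and ${}^{\perp}\cB$ are thick and $(-)\Lotimes_{\Lambda}(-)$ is triangulated in the kernel variable, it follows for every $N$ that $N\Lotimes_{\Lambda}K_{\cB}, N\Lotimes_{\Lambda}L_{\cB}\in\cB$, $N\Lotimes_{\Lambda}K\in\cB^{\perp}$ and $N\Lotimes_{\Lambda}L\in{}^{\perp}\cB$. Tensoring the two triangles above with $N$ and using $N\Lotimes_{\Lambda}\Lambda\simeq N$ produces functorial exact triangles which, by uniqueness of the decomposition triangle of $N$, are exactly the ones defining the right and the left projection onto $\cB$; hence these projections are isomorphic to $(-)\Lotimes_{\Lambda}K_{\cB}$ and $(-)\Lotimes_{\Lambda}L_{\cB}$.

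Finally, replacing $K_{\cB}$ and $L_{\cB}$ by homotopically projective (equivalently, since $\Lambda^{e}$ has finite global dimension, bounded complexes of projective) $\Lambda$-bimodule resolutions, the functors $(-)\otimes_{\Lambda}K_{\cB}$ and $(-)\otimes_{\Lambda}L_{\cB}$ become genuine dg functors on the standard dg enhancement of $\boundedderived\module\Lambda$. Since their essential images lie in $\cB$ and an admissible subcategory of a dg-enhanced triangulated category carries an induced dg enhancement compatible with the ambient one, the resulting dg functors, corestricted to $\cB$, enhance the projections, which is the assertion. I expect the main obstacle to be the middle step: checking with full functoriality, not merely objectwise, that the kernel functor attached to the decomposition of the diagonal really is the projection functor; this rests on the naturality and the essential uniqueness of the semi-orthogonal decomposition triangle. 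The first and last steps are formal once \cref{lemma: semi-orthogonal decomposition of derived category of a enveloping algebra} and the standard behaviour of dg bimodule tensor functors are granted.
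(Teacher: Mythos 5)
Your proposal is correct and follows essentially the same route as the paper: decompose the diagonal bimodule $\Lambda$ in $\boundedderived\module\Lambda^{e}$ using the semi-orthogonal decomposition from \cref{lemma: semi-orthogonal decomposition of derived category of a enveloping algebra}, identify the resulting kernel functors $(-)\Lotimes_{\Lambda}(-)$ with the projections, and repeat with $\langle\cB,\cB^{\perp}\rangle$ for the left projection. Your middle step (checking on generators $M\otimes_{\bfk}B$ that the kernel functors land in the correct components and invoking uniqueness of the decomposition triangle) is a detail the paper leaves implicit, and is a worthwhile addition.
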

\begin{proof}
For the diagonal \( \Lambda \)-bimodule \(\Lambda\),
there exists an exact triangle
\begin{equation*}
    P_{ \cB }
    \to
    \Lambda
    \to
    P_{ \cA }
    \to
    P_{ \cB }[ 1 ],
\end{equation*}
where \( P_{ \cA } \) and \( P_{ \cB } \) are objects belonging to the subcategories
\(
    \boundedderived\module \Lambda^{ \op } \otimes_{ \bfk } \cA
\)
and
\(
    \boundedderived\module \Lambda^{ \op } \otimes_{ \bfk } \cB
\), respectively.
The functors 
\(
    \Phi_{ P_{ \cA } }
    \colon
    M
    \mapsto
    M 
    \Lotimes_{ \Lambda }
    P_{ \cA }
\)
and 
\(
    \Phi_{ P_{ \cB } }
    \colon
    M
    \mapsto
    M
    \Lotimes_{ \Lambda }
    P_{ \cB }
\)
correspond to the projection functors onto the admissible subcategories 
\(
    \cA
\)
and
\(
    \cB
\),
respectively.
Hence the projection functors onto 
\(
    \cA
\)
and
\(
    \cB
\)
have dg enhancements.
In particular, the right projection onto \(\cB\) has a dg enhancement.
By applying the previous discussion to the semi-orthogonal decomposition
\(
\langle
\cB, \cB^{\perp}
\rangle
\),
the left projection onto (\cB) possesses a dg enhancement.
\end{proof}

\begin{proposition}[{= \cref{corollary: an exact triangle associated to the derived pullback which is a spherical functor}}]    
    Let 
    \( 
        \cB 
        \subset 
        \boundedderived \qgr(A) 
    \) 
    be an admissible subcategory and
    \(
        \pr^{ R }_{ \cB }
    \) 
    be the right projection onto \( \cB \).
    There exists an exact triangle in 
    \( 
        \dbcoh( E \times E ) 
    \)
    \begin{equation*}
        K_{ LR }
        \to
        \cO_{ \Delta }
        \to
        K_{ T }
        \to
        K_{ LR }[ 1 ]
    \end{equation*}
    such that 
    \(
        \Phi_{ K_{ LR } }
        \simeq 
        \bL j^{ \ast } \pr^{ R }_{ \cB } j_{ \ast }
    \)
    and 
    \(
        \Phi_{ K_{ T } }
        \simeq 
        T
    \),
    where \( T \) is a spherical twist functor associated to the spherical functor 
    \(
        \cB
        \hookrightarrow 
        \boundedderived \qgr( A )
        \to 
        \dbcoh(E)
    \).
    Thus for any \( F \in \dbcoh( E ) \), there exists an exact triangle
    \begin{equation*}
        \bL j^{ \ast } \pr^{ R }_{ \cB } (j_{ \ast } F)
        \to
        F
        \to
        T( F )
        \to
        \bL j^{ \ast } \pr^{ R }_{ \cB } ( j_{ \ast } F )[ 1 ]
    \end{equation*}
    in \( \dbcoh( E ) \).
\end{proposition}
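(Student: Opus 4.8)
The plan is to transport the problem into the module-theoretic framework set up in the appendix, via the tilting equivalence of \cref{theorem: full strong exceptional collection on fake p2}, and then push the resulting triangle of bimodule kernels forward onto $E\times E$ along $\bL j^{\ast}$. Concretely: by \cref{theorem: full strong exceptional collection on fake p2} the object $T\coloneqq\cO\oplus\cO(1)\oplus\cO(2)$ is tilting, so, writing $\Lambda\coloneqq\operatorname{End}(T)$ — a finite-dimensional algebra of the form $\bfk Q/I$ with $Q$ the three-vertex Beilinson-type quiver, in particular finite and acyclic — one obtains a dg equivalence $\boundedderived\qgr(A)\simeq\boundedderived\module\Lambda$, which is exactly the situation of the appendix. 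Transporting the given admissible subcategory $\cB$ to an admissible $\cB'\subset\boundedderived\module\Lambda$, the proof of \cref{lemma: admissible subcategory of finite dimensional algebra has dg enhancement} supplies an exact triangle of $\Lambda$-bimodule complexes
\[
    P_{\cB'}\to\Lambda\to P_{\cA'}\to P_{\cB'}[1]
\]
such that $(-)\Lotimes_{\Lambda}P_{\cB'}$ and $(-)\Lotimes_{\Lambda}P_{\cA'}$ are the right projection onto $\cB'$ and the left projection onto $\cA'$. Let $\Pi_{\cB},\Pi_{\cA}$ denote the corresponding projection endofunctors of $\boundedderived\qgr(A)$, so that $\bL j^{\ast}\pr^{R}_{\cB}j_{\ast}=\bL j^{\ast}\Pi_{\cB}j_{\ast}$ and $\Pi_{\cB}\to\id\to\Pi_{\cA}\to\Pi_{\cB}[1]$ is exact.

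Next I would move everything to $E\times E$. Put $\cW\coloneqq\bL j^{\ast}T$; since $A(i)$ has no $g$-torsion we have $\bL^{1}j^{\ast}\cO(i)=0$ by \cref{lemma:derived pullback of graded modules}, so $\cW$ is a vector bundle on $E$, and it carries a right $\Lambda$-action. Since $\bL j^{\ast}$ is a dg left adjoint it commutes with the bar construction computing $\Lotimes_{\Lambda}$, so under the equivalence above $\bL j^{\ast}$ corresponds to $(-)\Lotimes_{\Lambda}\cW$, and its right adjoint $j_{\ast}$ to $\bR\Hom_{\cO_{E}}(\cW,-)\simeq\cW^{\vee}\Lotimes_{\cO_{E}}(-)$ ($\cW$ being perfect on the smooth curve $E$). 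Consequently, for any $\Lambda$-bimodule complex $P$, the endofunctor $\bL j^{\ast}\circ\big((-)\Lotimes_{\Lambda}P\big)\circ j_{\ast}$ of $\dbcoh(E)$ is the integral functor whose kernel $K_{P}$ is obtained by convolving $\mathrm{pr}_{1}^{\ast}\cW^{\vee}$, $P$ and $\mathrm{pr}_{2}^{\ast}\cW$ over $\Lambda$; this $K_{P}$ lies in $\dbcoh(E\times E)$ because $\cW,\cW^{\vee}$ are perfect, $P$ is bounded with finitely generated terms, and $\Lambda$ has finite global dimension. As $P\mapsto K_{P}$ is exact, applying it to the triangle above gives an exact triangle $K_{LR}\to K_{\Lambda}\to K_{T}\to K_{LR}[1]$ in $\dbcoh(E\times E)$ with $K_{LR}\coloneqq K_{P_{\cB'}}$, $K_{T}\coloneqq K_{P_{\cA'}}$, and $K_{\Lambda}$ the kernel of $\bL j^{\ast}j_{\ast}$; since $j_{\ast}$ is fully faithful, $\bL j^{\ast}j_{\ast}\simeq\id_{\dbcoh(E)}$, so $K_{\Lambda}\simeq\cO_{\Delta}$. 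By construction $\Phi_{K_{LR}}\simeq\bL j^{\ast}\pr^{R}_{\cB}j_{\ast}$ and $\Phi_{K_{T}}\simeq\bL j^{\ast}\Pi_{\cA}j_{\ast}$.

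It then remains to identify $K_{T}$ with the spherical twist kernel. By \cref{example: restriction functor is a spherical functor}, $\bL j^{\ast}$ is spherical with cotwist equal to the Serre functor of $\boundedderived\qgr(A)$ up to a shift, so by Addington's theorem (\cite{Add16}) the composite $\bar F\colon\cB\hookrightarrow\boundedderived\qgr(A)\xrightarrow{\bL j^{\ast}}\dbcoh(E)$ is spherical; its right adjoint is $\bar R=\pr^{R}_{\cB}\circ j_{\ast}$ and the associated twist is $T=\operatorname{cone}\big(\bar F\bar R\xrightarrow{\,\epsilon\,}\id_{\dbcoh(E)}\big)$. Now $\bar F\bar R=\bL j^{\ast}\Pi_{\cB}j_{\ast}$, and the formula for the counit of a composite adjunction identifies $\epsilon$ with the natural transformation $\bL j^{\ast}\Pi_{\cB}j_{\ast}\to\bL j^{\ast}j_{\ast}\simeq\id_{\dbcoh(E)}$ induced by $\Pi_{\cB}\to\id$; hence the triangle produced in the previous step is precisely the spherical-twist triangle, so $\Phi_{K_{T}}\simeq T$. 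Evaluating $\Phi_{(-)}$ on the kernel triangle at an object $F\in\dbcoh(E)$ then yields the asserted triangle $\bL j^{\ast}\pr^{R}_{\cB}(j_{\ast}F)\to F\to T(F)\to\bL j^{\ast}\pr^{R}_{\cB}(j_{\ast}F)[1]$.

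The main obstacle is the dg bookkeeping underlying the last two steps: one must arrange the tilting equivalence, the identification $\bL j^{\ast}\simeq(-)\Lotimes_{\Lambda}\cW$, and the projection functors of the appendix to be simultaneously compatible at the dg level, so that $P\mapsto K_{P}$ is a genuine triangulated functor and the resulting triangle of kernels really induces the spherical-twist triangle of functors, with the correct morphisms, rather than merely a term-by-term isomorphic diagram. Granting the dg-enhancement and representability results already invoked in the appendix, this is routine but requires care.
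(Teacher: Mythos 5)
Your argument is correct and follows the same overall strategy as the paper: transport $\cB$ along the tilting equivalence of \cref{theorem: full strong exceptional collection on fake p2} to an admissible subcategory of $\boundedderived\module\Lambda$, extract the bimodule triangle $P_{\cB}\to\Lambda\to P_{\cA}\to P_{\cB}[1]$ from \cref{lemma: admissible subcategory of finite dimensional algebra has dg enhancement}, and convert it into a triangle of Fourier--Mukai kernels on $E\times E$. The one place you genuinely diverge is the conversion step. The paper composes the dg adjunctions into a single dg adjunction $L\dashv R$ between $\cB_{\Lambda}$ and $\dgPerf(E)$ and then invokes To\"{e}n's representability theorem to realize the resulting triangle of dg endofunctors $LR\to\id\to T$ by a triangle of kernels, whereas you construct the kernels explicitly as convolutions $q^{\ast}\mathcal{W}^{\vee}\Lotimes_{\Lambda}P\Lotimes_{\Lambda}p^{\ast}\mathcal{W}$ with $\mathcal{W}=\bL j^{\ast}G$ for the tilting object $G$. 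Your version is more concrete: it makes the identification $K_{\Lambda}\simeq\cO_{\Delta}$ (via $\bL j^{\ast}j_{\ast}\simeq\id$, which uses full faithfulness of $j_{\ast}$, plus uniqueness of kernels) and the identification of the counit of the composite adjunction transparent, at the cost of exactly the dg bookkeeping you flag at the end; the paper's appeal to To\"{e}n packages that bookkeeping into a citation, and your remaining verifications (that $P\mapsto K_{P}$ is exact in $P$ and compatible with the unit/counit maps at the dg level) are of the same nature as what To\"{e}n's theorem is being used for. Two small points to tighten: the claim $\bL^{1}j^{\ast}\cO(i)=0$ needs the fact that $g$ is not a zero divisor on $A$ (three-dimensional AS-regular algebras are domains), and you use the letter $T$ for both the tilting object and the spherical twist.
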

\begin{proof}
Since 
\(
    \boundedderived \qgr(A)
\) 
admits a full strong exceptional collection, 
there is a tilting object \( G \) by \cref{theorem: full strong exceptional collection on fake p2}.
We apply the lemmas above to \(\Lambda=\End(G)\).
There is a dg adjunction 
\begin{align}\label{eq: Hom tensor adjoint}
    -\otimes_{\Lambda} G
    \colon
    \dgPerf(\Lambda)
    \rightleftarrows
    \dgPerf( A )
    \colon
    \Hom( G, - )
\end{align}
where 
\(
    \dgPerf( \Lambda )
\)
is a dg enhancement of the triangulated category of perfect dg modules  
\(
    \Perf( \Lambda )
\)
and
\(
    \dgPerf( A )
\)
is a dg enhancement of 
\(
    \boundedderived\qgr ( A )
\) (see \cite{MR1258406}).
Since \( G \) is a tilting object, 
the adjunction induces an equivalence of triangulated categories between its homotopy categories.
\begin{align}\label{eq:Hom-tensor equivalence by tilting object}
    -\Lotimes G
    \colon
    \boundedderived \module \Lambda
    \simeq 
    \boundedderived \qgr( A )
    \colon
    \RHom ( G, - )
\end{align}

Let \(\cB_{\Lambda}\) be a full dg subcategory of 
\(
    \dgPerf(\Lambda)
\) 
which corresponds to 
\(
    \cB
    \subset 
    \boundedderived \qgr( A )
\) 
via the equivalence \eqref{eq:Hom-tensor equivalence by tilting object}.
By \cref{lemma: admissible subcategory of finite dimensional algebra has dg enhancement}, 
there exists a dg adjunction.
\begin{equation*}
    \begin{tikzcd}
        \cB_{ \Lambda }
        \arrow[r, shift left=1, hook]
        &
        \dgPerf ( \Lambda )
        \arrow[l, shift left=1]
    \end{tikzcd}
\end{equation*}
We have a dg adjunction 
\(
    L
    \dashv 
    R
\)
\begin{equation*}
    \begin{tikzcd}
        L
        \colon
        \cB_{\Lambda}
        \arrow[r, shift left=1, hook]
        &
        \dgPerf(\Lambda)
        \arrow[r, shift left = 1, "\eqref{eq: Hom tensor adjoint}"]
        \arrow[l, shift left = 1]
        &
        \dgPerf(A)
        \arrow[r, shift left = 1, "\bL j^{ \ast }"]
        \arrow[l, shift left = 1]
        &
        \dgPerf(E)
        \colon 
        R
        \arrow[l, shift left = 1, "j_{ \ast }"],
    \end{tikzcd}
\end{equation*}
where 
\(
    \dgPerf( E )
\)
is a dg enhancement of the triangulated category of perfect complexes on 
\( 
    E 
\).
From the adjunction 
\( 
    L \dashv R 
\), 
we have an exact triangle of dg endofunctors of 
\(
    \dgPerf( E )
\)
\begin{equation}\label{eq: exact triangle associated to adjoint}
    LR
    \xrightarrow{\epsilon}
    \id
    \to
    T\coloneqq \Cone(\epsilon)
    \to
    LR[ 1 ]
\end{equation}
where 
\(
    \epsilon
    \colon 
    LR
    \to 
    \id_{ \dgPerf( E ) }
\) 
is a counit.
By \cite[{Theorem~8.9}]{Toen2007}, there exists an exact triangle
\begin{equation}\label{eq: exact triangle of kernels}
    K_{ LR }
    \to
    \cO_{ \Delta }
    \to
    K_{ T }
    \to
    K_{ LR }[ 1 ]
\end{equation}
in 
\(
    \dbcoh( E \times E )
\)
such that the exact triangle of integral functors associated to \eqref{eq: exact triangle of kernels}
corresponds to \eqref{eq: exact triangle associated to adjoint}.
By construction, 
\(
    H^{ 0 } ( \Phi_{ K_{ LR } } )
    \simeq 
    \bL j^{ \ast } \pr^{ R }_{ \cB } j_{ \ast }
\)
as the exact endofunctors of 
\( 
    \dbcoh( E )
\).
We have an exact triangle in \( \dbcoh ( E ) \)
\begin{equation*}
    \bL j^{ \ast } \pr^{ R }_{ \cB }j_{ \ast }( F )
    \to
    F
    \to
    T( F )
    \to
    \bL j^{ \ast } \pr^{ R }_{ \cB }j_{ \ast } ( F )[ 1 ]
\end{equation*}
for any object \( F \in \dbcoh( E ) \).
\end{proof}

\printbibliography
\end{document}